\numberwithin{equation}{section}
\def\assumptionname{\scshape Assumption}
\newtheorem{assumption}{\assumptionname}[chapter]
\begin{document}
\markboth{E. Braverman and A. Rodkina}{Stabilization with a stochastic control}
\title{{\itshape Stochastic control 
stabilizing unstable or chaotic maps}}

\author{Elena Braverman$^{\rm a}$$^{\ast}$\thanks{$^\ast$Corresponding author. Email:
maelena@ucalgary.ca}
and  Alexandra Rodkina$^{\rm b}$ \\\vspace{6pt}
$^{\rm a}${\em{Department of Mathematics and Statistics, University of Calgary,\\
2500 University Drive N.W., Calgary, AB T2N 1N4, Canada}} \\
$^{\rm b}${\em{Department of Mathematics,
The University of the West Indies,\\ Mona Campus, Kingston, Jamaica}}}

\maketitle

\begin{abstract}
The paper considers a stabilizing stochastic control which can be applied to a variety of unstable and even 
chaotic maps. Compared  to previous methods introducing control by noise, we relax assumptions on the class of maps, 
as well as consider a wider range of parameters for the same maps. This approach allows to stabilize unstable and  chaotic maps by noise.
The interplay between the map properties and the allowed neighbourhood where a solution can start to be stabilized
is explored: as instability of the original map increases, the interval of allowed initial conditions narrows.
A directed stochastic control aiming at getting to the target neighbourhood almost sure is combined 
with a controlling noise. Simulations illustrate that for a variety of problems, an appropriate bounded noise can stabilize an unstable positive equilibrium, without a limitation on the initial value. 

\end{abstract}

\begin{keywords}
stochastic difference equations; stabilization; control; Kolmogorov's Law of large numbers; multiplicative noise
\end{keywords}

\begin{classcode} 
39A50; 37H10; 93D15; 39A30
\end{classcode}\bigskip

\section{Introduction}

Significant interest to discrete models is stimulated by complicated types of behavior exhibited even by
simple maps. 

The idea of stabilizing an unstable equilibrium of differential equations by noise originates from the  work of R. Khasminskii on stochastic stability \cite{Hasmin}, see also the most recent edition of the monograph \cite{Khas}.   
In 1983, the possibility to stabilize a linear system by noise was demonstrated 
in the paper of L. Arnold {\em et al}~\cite{Arnold}.   
This approach  was expanded  and developed in later works: 
for stochastic differential and functional differential equations see, e.g. \cite{AM,AMR1,Carab,Mao}, 
and for stochastic difference equations in \cite{ABR,AMR06}.

Kolmogorov's  Law of Large Numbers was  applied in the proof of stability of the zero equilibrium for  
linear and nonlinear stochastic non-homogeneous equations  in \cite{BR0,BR1}, and for systems with  
square nonlinearities  in  \cite{Medv}. 
This approach was  originated by H. Kesten (see e.g.  \cite{FK} for a linear model, 
and \cite {K} for convergence in probability). 

Introduction of the stochasticity into the population dynamics description, as well as into  controls, is quite 
a natural part of a model design due to many reasons. 
This includes an extrinsic noise, which may be described 
by a state-independent, or additive, stochastic perturbation. 
The implementation of control cannot be done precisely, leading to a state-dependent, or
multiplicative, stochastic perturbation. 
The influence of stochasticity on population survival, 
chaos control and eventual cyclic behavior was investigated in \cite{BR2c,BravRodk2}, different types of control 
which include stochastic perturbations were considered for the scalar case in \cite{BKR,BR}, see 
the recent papers \cite{Bash1,Bash2,Faure} and references therein for systems. 

In \cite{BRAllee} it was shown that in certain cases, an additive non-decaying stochastic perturbation  
could completely diminish the Alee effect. 
We were able to prove this result,  using the fact that for a given sequence $(\xi_n)_{n\in \mathbb N}$ of independent identically 
distributed random variables, for a subinterval of their support and any  
number $\bar J\in \mathbb N$, there exists a random number $\mathcal N$, 
starting from which $\bar J$ random variables,  in a row,  take values in this interval with probability 1.
The influence of stochastic perturbations on the Allee effect in 
discrete systems was also recently studied in  \cite{Elaydi2016,Roth}.

Concerning the noise component of controls in \cite{BKR,BR},  
it slightly reduced the range of control parameters where stabilization is guaranteed. 
Unlike \cite{BKR,BR}, in the present paper noise is an important {\em stabilizing factor}:
an otherwise unstable positive equilibrium becomes stable after introducing an unstructured noise,
once the initial value is close enough to this equilibrium. The idea is inspired by both physical 
and biological models. As examples, we consider population dynamics models: logistic and Ricker.
The fact that stochastic perturbations can stabilize an unstable equilibrium was
discovered for physical models in the 1950s, following the well-known example of
the pendulum of Kapica \cite{Kapica}. Here we generalize it to a wide range of population
dynamics models, in a discrete setting.

We consider a deterministic difference equation
\begin{equation}
\label{eq:detmainintr}
z_{n+1}={\rm f}(z_n), \quad n\in \mathbb N, \quad  z_0>0,
\end{equation}
where ${\rm f}:[0, \infty)\to :[0, \infty)$ is continuous and has a unique positive fixed point $K>0$. 
We suppose that the  equilibrium $K$ is unstable, and moreover, in  some  interval $(K-u, K+u)$, 
the function $\rm f$ can be represented for some $u, q, C, \kappa >0$ as
\begin{equation}
\label{repr:rmfintr}
{\rm f}(z)=K-(1+q)(z-K)+\phi(z-K), \quad |\phi(z-K)|\le C|z-K|^{1+\kappa}.
\end{equation}
 
Our aim is to construct a stochastic control which stabilizes the equilibrium $K$ for  
initial values $x_0\in (K-u, K+u)$ with a given probability.  
Since equation \eqref{eq:detmainintr} is applied to population dynamics models, 
we consider only bounded noises, which are supposed to be mutually independent. 
Compared to previous methods introducing control by noise, we relax assumptions on the class of maps and
consider a wider range of parameters for the function ${\rm f}$. In particular, the original map can be chaotic. However,
the stronger the map instability is, the smaller is the allowed neighbourhood where a solution can start.
With chaotic maps and values coming occasionally as close to the equilibrium as required, with a probability close to one
the solution eventually at least once enters the target neighbourhood. However, the situation changes if there is an attractive cycle or
other orbit separated from the equilibrium point. To attain the target domain, we apply a directed stochastic control 
which is stopped once a solution is in the required neighbourhood.

The main type of control includes noise only, it can be considered as cost-free, if such a noise is natural.
However, first an additional control should push a solution into a smaller target interval $(K-\delta, K+\delta)$. 
Note that to reach this interval, various control methods can be applied, for example, Prediction 
Based Control \cite{BKR} or non-stochastic Target Oriented Control \cite{Dattani,TPC}. 
As stochasticity is an intrinsic part of a control, we incorporate noise at this preliminary control stage as well.
Prediction Based and Proportional Feedback controls \cite{BKR,BR} included either multiplicative or additive 
noise. We introduce a Directed Walk Control (DWC) 
\begin{equation}
\label{def:DWIntr}
z_{n+1}={\rm f}(z_n) -\alpha_j (1+ \ell_c \chi_{n+1}) \frac{{\rm f}(z_n)-K}{|{\rm f}(z_n)-K|}+ \ell \zeta_{n+1}, ~z_n \in J^{(j)},~
j=0, \dots, k-1,
\end{equation}
acting on a finite number of nested intervals $J^{(j)} \subset J^{(j-1)}$, starting from some interval $J^{(0)}=(K-u, K+u)$,  until the solution $z$, after a 
finite number of steps which can be explicitly computed, reaches  the target interval  $J^{(k)}=(K-\delta, 
K+\delta)$.  Here $(\chi_n)_{n\in \mathbb N}$ and $(\zeta_n)_{n\in \mathbb N}$ are sequences of independent bounded noises.
In addition to its ability to reach a target interval, which any of the
mentioned methods can, DWC
includes, unlike \cite{BKR,BR}, both a multiplicative and an additive noise, which is more natural in the setting of the model.
In addition, the total number of steps to reach the target interval is estimated (see the appendix),
the step where the solution enters it is random, however, the maximal cost can be evaluated analytically. 
This is, generally, not an easy task, see, for example, \cite{Dattani}.

As soon as, at a random moment $\tau=\inf\{n\in \mathbb N: z_n\in (K-\delta, K+\delta)\}$,  the solution $z$ reaches 
the target interval $(K-\delta, K+\delta)$,   the second type of control  is launched, which eventually brings $z$ 
to the equilibrium.  This is the main control, which is described in Section~\ref{sec:expest} and is based on the application of the Kolmogorov's  Law of 
Large Numbers. In this paper we call this method the Multiplicative Noise Control (MNC). 
This control is genuinely  stochastic,  and application of such a control can be viewed as an example of 
stabilization by noise. After applying MNC, the equation takes the form
\begin{equation}
\label{eq:MNCintr}
z_{n+1}={\rm f}(z_n)+\sigma \xi_{n+1}(z_n-K), \quad n\ge \tau, \quad z_\tau\in (K-\delta, K+\delta) .
\end{equation}
For  $\rm f$ satisfying \eqref{repr:rmfintr}, suggested in \eqref{eq:MNCintr} control   stabilizes the equilibrium $K$ if 
\begin{equation}
\label{cond:Eqs}
- \lambda := \mathbb E\ln |1+q-\sigma \xi_{n+1} |<0.
\end{equation}
For quite a variety of   independent and identically distributed random variables  $\xi_n$ 
inequality \eqref{cond:Eqs} is valid  when $q$ and $\sigma$ are small enough, and  $\sigma^2>2q,$ see e.g. \cite{Medv,K}. 
In Section~\ref{sec:exsim} we present calculations of $\lambda$ in the two cases: $\xi_n$
are continuous uniformly distributed on $[-1, 1]$ random variables and $\xi_n$ are Bernoulli distributed random  variables  taking the values of  1 and -1 with equal probabilities $p=1-p=0.5$.   
It appears that   $q$ and $\sigma$ do not need to  be  too small to ensure fulfillment of condition \eqref{cond:Eqs}. In particular, for Bernoulli distributed $\xi_n$, 
condition \eqref{cond:Eqs} holds  for any $q\ge 0$, when $\sigma$ is chosen appropriately (see \eqref{calc:Evarbernpar} in Section \ref{sec:exsim}). The case of $q=1$, $\sigma =2.1$  is illustrated by simulation, see Fig.~\ref{figure7}. 

The  main result of the paper states that, under conditions \eqref{repr:rmfintr} and \eqref{cond:Eqs}, for any 
$\gamma\in (0, 1)$,   we can find a $\delta>0$ such that after applying  control \eqref{def:DWIntr} 
for $z_0\in (K-u, K+u)$  followed by control \eqref {eq:MNCintr}, 
for $z_\tau\in (K-\delta, K+\delta)$, we have $\lim\limits_{n\to \infty}z_n=K$ with probability greater than $1-\gamma$.

Calculations in  Sections~\ref{sec:expest}  show that the radius  $\delta$ of the interval for the initial value 
$x_0$ can be extremely small.  To enhance  this situation, in Section~\ref{sec:comb}, we suggest the combined  method, where the first control pushes the solution into a larger interval $(K-\beta, K+\beta)$, where 
$(K-\delta, K+\delta)\subset(K-\beta, K+\beta)\subset (K-u, K+u)$,  and the solution is returned 
to  $(K-\beta, K+\beta)$ each time it gets out. 
Applying an alternative stabilization by noise  approach, 
see Lemma~\ref{lem:barprob}, we prove that the solution still reaches  $(K-\delta, K+\delta)$ in a.s. finite time.   
Thus we consider a control with switching.  Since the moment when the solution enters the designated interval is 
random, the switching happens at random moments,  
which  brings some  extra complications and makes proofs quite technical (see Section~\ref{sec:expest}). 

The main differences between the results of our paper and some previous research, for example, 
\cite{Medv}, can be outlined as follows. 
\begin{enumerate}
\item
We combine stabilization by noise with other control methods. However, at the final stage we have only stabilization by 
a state-dependent noise.
\item
In general we don't assume that $q$ is small. 
However, the larger is $q$, the smaller the initial neighbourhood should be. We note that in numerical runs wider 
neighbourhoods are applied than rigorously predicted theoretically, the estimates are {\em sufficient only}.
Also, $\kappa$ in \eqref{repr:rmfintr}  can be any positive number, while $\kappa=1$ necessarily in \cite{Medv}.
\item
In \cite{Medv}, switching between two equilibrium points is controlled: the noise brings the zero equilibrium which loses stability, to become stable again, in a small neighbourhood of the equation parameter.
In our examples, we consider mostly positive equilibrium points which lose stability (giving rise to stable cycles and eventually chaos), and become stable under controls with stochastic perturbations. The maps which are stabilized can even be chaotic.
\end{enumerate}

The paper is organized as follows. 
After describing all relevant definitions, assumptions and notations for equation~\eqref{eq:detmainintr} 
in Section~\ref{sec2}, we state the main results of the paper justifying the possibility to stabilize a map by noise 
in Section~\ref{sec:mainres}. However, such stabilization is stipulated by the choice of the initial point in the close proximity 
of the initial point to the unstable equilibrium, especially for chaotic maps. To alleviate this requirement, Section~\ref{sec:comb} considers a combination of another method applied at some finite and initially evaluated number of steps, and the noise control. This allows to start 
MNC from a significantly wider interval, in fact practically from any positive value. 
In Section~\ref{sec:exsim} we consider several types of equations, in particular Ricker and logistic,
as well as another type for which none of the results developed in \cite{Medv} can be applied. 
In numerical simulations, we use either uniformly or Bernoulli distributed noise. 
Finally, Section~\ref{sec:discussion} contains a summary and a discussion, and describes possible developments of the present research.
Most of involved proofs of the main results are given in Appendix A. We introduce  Directed Walks Control (DWC) as an additional method, all the details are postponed till Appendix B.


\section{Preliminaries}
\label{sec2}

Let  $(\Omega, {\mathcal{F}},  (\mathcal{F}_n)_{n \in \mathbb{N}}, {\mathbb{P}})$ be  a complete filtered probability space.  
In the paper we deal with  three  sequences of random variables $(\xi_n)_{n\in \mathbb N}$, $(\zeta_n)_{n\in \mathbb N}$, and $(\chi_n)_{n\in \mathbb N}$, each of them satisfying  the following assumption.

\begin{assumption}
\label{as:chixi}
Each sequence $(\zeta_n)_{n\in \mathbb N}$, $(\chi_n)_{n\in \mathbb N}$ and $(\xi_n)_{n\in \mathbb N}$ consists of identically  distributed  random variables, and
$\zeta_n$, $\chi_n$ and $\xi_n$ are mutually independent random variables satisfying $|\zeta_n| \leq 1$, $|\chi_n| \leq 1$, $|\xi_n| \leq 1$, $n\in \mathbb N$.
\end{assumption}
 
We use the standard abbreviation ``a.s." for the wordings ``almost sure" or ``almost surely" 
with respect to the fixed probability measure $\mathbb P$  throughout the text. 
A detailed discussion of stochastic concepts and notation may be found, for example, in \cite{Shiryaev96}. 

Everywhere below, for each $t\in [0, \infty)$, we denote by $[t]$ the integer part of $t$, $\mathbb N_0:=\mathbb N\cup \{0\}$, and, for $\delta, u\in (0, \infty)$, $0<\delta<u$,  
\begin{equation}
\label{def:IduK}
I_\delta:=(-\delta, \, \delta), ~ I_u:=(-u, \, u), 
~I_{u, K}:=(K-u, K+u),~ I_{\delta, K}:=(K-\delta, K+\delta).
\end{equation}
In the paper we consider identically distributed random variables, $\xi_n$. Sometimes, when we deal with their 
probabilities, $\mathbb P\{\xi_n>a\}$, or expectations,   $\mathbb E \ln (1+q-\sigma \xi_n)$,  the index $n$ could be omitted. 

\subsection{Kolmogorov's  Law of Large Numbers and two more lemmas}
\label{subsec:Kolm}

In this section we formulate the Kolmogorov's  Law of Large Numbers, 
see i.e. \cite {Shiryaev96}, and two lemmas. The proof of Lemma \ref{lem:aux} is straightforward and is thus omitted. The proof of Lemma \ref{lem:barprob} can be found, e.g in \cite{BKR} or \cite{BRAllee}.
\begin{theorem}[(Kolmogorov)]
\label{thm:Kolm}
  Let $(v_{n})_{n\in\ {\mathbb N}}$ be the sequence of independent random
  variables with $\theta_n^2=Var (v_n)<\infty$. Let $S_n=v_1+\dots
  +v_n$ and an increasing sequence of $b_n>0$ be such that $b_n\uparrow + \infty$ as $n\to
  \infty$  and
  \begin{equation*}
    \sum_{i=1}^{\infty}\frac{\theta_i^2}{b_i^2}<\infty.
    \label{Kolm}
  \end{equation*}
  Then a.s.,
  $\displaystyle   \frac{S_n-{\bf E}S_n}{b_n} \rightarrow 0$.
\end{theorem}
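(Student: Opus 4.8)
The plan is to deduce the theorem from two classical ingredients — Kolmogorov's convergence theorem for series of independent random variables, and Kronecker's lemma — after a harmless centering. First I would set $w_n:=v_n-\mathbb{E}v_n$, so that $\mathbb{E}w_n=0$, $\mathrm{Var}(w_n)=\theta_n^2$, and $S_n-\mathbb{E}S_n=\sum_{k=1}^{n}w_k$; hence it suffices to show $b_n^{-1}\sum_{k=1}^{n}w_k\to 0$ a.s. Then I would pass to the rescaled summands $X_n:=w_n/b_n$, which are independent, centered, and satisfy $\sum_{n=1}^{\infty}\mathrm{Var}(X_n)=\sum_{n=1}^{\infty}\theta_n^2/b_n^2<\infty$ by hypothesis.

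The core step is to prove that the series $\sum_{n=1}^{\infty}X_n$ converges a.s. For this I would first establish Kolmogorov's maximal inequality: for independent, mean-zero, square-integrable $Y_1,\dots,Y_m$ with partial sums $T_k$, one has $\mathbb{P}\{\max_{1\le k\le m}|T_k|\ge\varepsilon\}\le\varepsilon^{-2}\sum_{k=1}^{m}\mathrm{Var}(Y_k)$. This follows from a first-entrance (stopping-time) decomposition of the event $\{\max_{k\le m}|T_k|\ge\varepsilon\}$ into the disjoint pieces $A_k=\{|T_j|<\varepsilon\text{ for }j<k,\ |T_k|\ge\varepsilon\}$, writing $T_m=T_k+(T_m-T_k)$ on $A_k$ and using that $\mathbf{1}_{A_k}T_k$ is independent of $T_m-T_k$ to drop the cross term, which gives $\mathbb{E}T_m^2\ge\varepsilon^2\sum_k\mathbb{P}(A_k)$. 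Applying this inequality to the tail blocks $Y_j=X_{N+j}$ yields $\mathbb{P}\{\sup_{m>N}|\sum_{k=N+1}^{m}X_k|\ge\varepsilon\}\le\varepsilon^{-2}\sum_{k>N}\theta_k^2/b_k^2\to 0$ as $N\to\infty$; thus the partial sums $\sum_{k\le n}X_k$ are a.s. Cauchy and converge a.s. to a finite limit.

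Finally I would invoke Kronecker's lemma: since $b_n\uparrow\infty$ and the series $\sum_{n}w_n/b_n=\sum_n X_n$ converges a.s., it follows that $b_n^{-1}\sum_{k=1}^{n}w_k\to 0$ a.s., which is exactly the assertion $b_n^{-1}(S_n-\mathbb{E}S_n)\to 0$ a.s. Kronecker's lemma itself is a short summation-by-parts computation and causes no trouble.

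The main obstacle is the combination of the maximal inequality with the passage to a.s. convergence of the rescaled series; the centering and Kronecker's lemma are routine. Since this is the classical strong law under the Kolmogorov condition, one may alternatively just cite a standard reference such as \cite{Shiryaev96}, which is how the result is used in what follows.
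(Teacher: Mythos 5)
Your proof is correct and is precisely the classical argument (centering, Kolmogorov's maximal inequality to get a.s.\ convergence of $\sum_n (v_n-\mathbb{E}v_n)/b_n$, then Kronecker's lemma); the paper does not prove this theorem at all but simply cites it from a standard reference (Shiryaev), which contains exactly this proof. So your proposal matches the argument the paper is implicitly invoking, and citing the reference, as you note at the end, is what the paper actually does.
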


\begin{lemma} 
\label{lem:aux}
  Let $(e_n)_{n\in\ \mathbb N}$, $e_n \in {\mathbb R}$ be an increasing sequence,
$\displaystyle \lim_{n\to\infty} e_n = +\infty$,
and
  $(b_n)_{n\in\ \mathbb N}$ be a sequence satisfying $\displaystyle \sum_{j=0}^{\infty}b_j < \infty$.
  Then
  $\displaystyle
  \lim_{n\to\infty}\frac{1}{e_n}\sum_{j=0}^{n} b_je_j = 0$.
\end{lemma}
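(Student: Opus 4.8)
The statement is exactly Kronecker's lemma, so the plan is to reduce it to a Ces\`aro/Toeplitz averaging argument via summation by parts. Write $S_n:=\sum_{j=0}^n b_j$ with the convention $S_{-1}:=0$; the hypothesis $\sum_{j=0}^\infty b_j<\infty$ means precisely that $S_n\to S$ for some finite $S$ (and this is all the argument will use, so whether the series converges absolutely or only conditionally is immaterial). Since $e_n\to+\infty$, we have $e_n>0$ for all $n$ large enough, and we only ever consider such $n$.

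First I would rewrite $b_j=S_j-S_{j-1}$ and apply Abel summation to $\sum_{j=0}^n b_j e_j=\sum_{j=0}^n (S_j-S_{j-1})e_j$. Shifting the index in the second half of the split and using $S_{-1}=0$ gives the identity
\begin{equation*}
\sum_{j=0}^n b_j e_j = S_n e_n-\sum_{j=0}^{n-1}S_j\,(e_{j+1}-e_j),
\end{equation*}
hence, dividing by $e_n$,
\begin{equation*}
\frac{1}{e_n}\sum_{j=0}^n b_j e_j = S_n-\frac{1}{e_n}\sum_{j=0}^{n-1}S_j\,(e_{j+1}-e_j).
\end{equation*}
Because $(e_n)$ is increasing, the numbers $w_{n,j}:=(e_{j+1}-e_j)/e_n\ge 0$ are nonnegative weights with $\sum_{j=0}^{n-1}w_{n,j}=(e_n-e_0)/e_n\to 1$, so the second term is a weighted average of $S_0,\dots,S_{n-1}$ with total mass tending to $1$.

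The one substantive step is then to show $\tfrac{1}{e_n}\sum_{j=0}^{n-1}S_j(e_{j+1}-e_j)\to S$. Given $\varepsilon>0$, pick $N$ with $|S_j-S|<\varepsilon$ for $j\ge N$ and split the sum at $N$: the ``head'' $\tfrac{1}{e_n}\sum_{j=0}^{N-1}(S_j-S)(e_{j+1}-e_j)$ is a fixed constant divided by $e_n\to+\infty$, hence tends to $0$, while the ``tail'' is bounded in absolute value by $\tfrac{\varepsilon}{e_n}\sum_{j=N}^{n-1}(e_{j+1}-e_j)=\varepsilon\,(e_n-e_N)/e_n\le\varepsilon$; adding the contribution $S\,(e_n-e_0)/e_n\to S$ shows the limit is $S$. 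Combining with the displayed identity, $\tfrac{1}{e_n}\sum_{j=0}^n b_je_j\to S-S=0$.

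There is no real obstacle here: the lemma is elementary, and the only points requiring a little care are the index bookkeeping in the summation by parts (the shift that produces the $e_{j+1}-e_j$ terms and the boundary term, which vanishes thanks to $S_{-1}=0$) and the harmless remark that $e_n$ may fail to be positive for finitely many $n$, which is irrelevant for a statement about $n\to\infty$.
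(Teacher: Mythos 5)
Your proof is correct and complete: it is the standard Abel-summation (Kronecker's lemma) argument, with the index bookkeeping and the boundary term $S_{-1}e_0=0$ handled properly, the Toeplitz-weight observation made explicit, and the minor sign issue for finitely many nonpositive $e_n$ noted. The paper itself declares this lemma's proof ``straightforward'' and omits it, so there is no authorial argument to compare against; yours is the expected one.
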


\begin{lemma} (see \cite{BKR,BRAllee})
\label{lem:barprob}
Let $(\xi_n)_{n\in\ \mathbb N} $ be a sequence of independent identically distributed random variables such that $\mathbb P \left\{\xi_n\in (a,b]\right\}=p_1\in (0, 1)$ for some interval $(a,b]$, $a<b$, and each $n\in\mathbb{N}_0$. Then for each  nonrandom $\bar J \in {\mathbb N}$, the probability
\begin{multline*}
\mathbb{P}\left[\text{There exists } \mathcal N=\mathcal N(\bar J)<\infty\right.\\
\left. \text{such that }   \xi_{\mathcal N}\in (a,b], \, \xi_{\mathcal N+1}\in [a,b), \dots, 
\xi_{\mathcal N+\bar J}\in (a,b] \right]=1.
\end{multline*}
\end{lemma}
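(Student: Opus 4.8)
The plan is to reduce the statement to the second Borel--Cantelli lemma by a block decomposition of the index set. Since we only need a run of $\bar J+1$ consecutive values in $(a,b]$ to occur \emph{somewhere}, it suffices to exhibit infinitely many \emph{disjoint} windows in each of which such a run occurs; disjointness is exactly what provides the independence that Borel--Cantelli requires, and the positivity of $p_1$ together with the fact that $\bar J$ is nonrandom makes the per-window probability a fixed positive constant.

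Concretely, first I would partition $\mathbb{N}_0$ into consecutive non-overlapping blocks of length $\bar J+1$: for $k\in\mathbb{N}_0$ set
$B_k:=\{k(\bar J+1),\,k(\bar J+1)+1,\,\dots,\,k(\bar J+1)+\bar J\}$,
and define the events $A_k:=\bigcap_{j\in B_k}\{\xi_j\in(a,b]\}$. By Assumption~\ref{as:chixi} the $\xi_j$ are independent and identically distributed, so $\mathbb{P}(A_k)=p_1^{\,\bar J+1}$ for every $k$, a strictly positive number because $p_1\in(0,1)$. Moreover the blocks $B_k$ are pairwise disjoint, hence the events $(A_k)_{k\in\mathbb{N}_0}$ depend on disjoint families of the independent variables $\xi_j$ and are therefore mutually independent.

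Next, $\sum_{k=0}^{\infty}\mathbb{P}(A_k)=\sum_{k=0}^{\infty}p_1^{\,\bar J+1}=+\infty$, so the second Borel--Cantelli lemma applied to the independent events $(A_k)$ yields $\mathbb{P}\bigl(\limsup_{k\to\infty}A_k\bigr)=1$; in particular, almost surely at least one $A_k$ is realized. On that almost-sure event let $k^{\star}$ be the smallest $k$ with $A_k$ realized and put $\mathcal N:=k^{\star}(\bar J+1)$. Then $\mathcal N<\infty$ and $\xi_{\mathcal N},\xi_{\mathcal N+1},\dots,\xi_{\mathcal N+\bar J}$ all lie in $(a,b]$, which is the assertion. (If a filtration-measurable scanning index is wanted, one may instead take $\mathcal N=\inf\{n\in\mathbb{N}_0:\xi_n,\dots,\xi_{n+\bar J}\in(a,b]\}$, which is no larger than $k^{\star}(\bar J+1)$ and hence also a.s.\ finite.)

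There is no genuinely difficult step: the only point needing care is insisting on \emph{disjoint} windows, so that the run-events are truly independent and the divergence of $\sum\mathbb{P}(A_k)$ can be upgraded to an almost-sure occurrence; overlapping windows would break independence and the argument. As this is a classical fact, I would, exactly as the authors do, also simply refer to \cite{BKR,BRAllee} for the details.
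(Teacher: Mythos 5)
Your proof is correct, and it is the standard argument: partition the index set into disjoint blocks of length $\bar J+1$, note that the run-events on disjoint blocks are independent with a fixed positive probability $p_1^{\bar J+1}$, and invoke the second Borel--Cantelli lemma. The paper itself does not reproduce a proof here but defers to \cite{BKR,BRAllee}, and those sources use essentially this same block/Borel--Cantelli scheme, so your route matches the intended one. One small point worth flagging for your own benefit, not a flaw in your write-up: the lemma statement in the paper writes $\xi_{\mathcal N+1}\in[a,b)$ while all the other indices use $(a,b]$; this is evidently a typographical slip, and your uniform use of $(a,b]$ (the only interval for which the hypothesis gives a probability) is the correct reading.
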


\subsection{Transformation of the equation}
\label{subsec:rmffxi}

In this section we formulate the main assumptions about  $\rm f$, and transform the equation 
\begin{equation}
\label{eq:detmain}
z_{n+1}={\rm f}(z_n), \quad n\in \mathbb N, \quad z_0>0,
\end{equation}
moving the equilibrium $K$ to 0, as well as introduce some notations and comments on the MNC (Multiplicative Noise 
Control)  method. 
\begin{assumption}
\label{as:fu}
Let $ \mathrm f:[0, \infty)\to [0, \infty)$ be a continuous function which 
has a unique positive fixed point $K>0$, $ (z-K)({\rm f}(z)-K)<0,~z\neq K$,
\begin{equation}
\label{repr:rmf}
{\rm f}(z)=K-(1+q)(z-K)+\phi(z-K),
\end{equation}
whenever $z\in I_{u, K}$, and for some fixed positive $u, q, C, \kappa$,
\begin{equation}
\label{eq:phi}
|\phi (x)|\le C|x|^{1+\kappa}, \quad \forall x\in  I_u.
\end{equation}
\end{assumption}

Denote  
\begin{equation}
\label{def:f}
x:=z-K, \,\, f(x)=\mathrm f(z-K)-K,\,\,  \text{so }\,\,  x\in I_u\,\,  \text{when} \,\,  z\in I_{u, K}.
\end{equation}
When $\rm f$ satisfies Assumptions \ref{as:fu},  transformation \eqref{def:f} moves the equilibrium $K$  to zero. 
For the function $f$ defined by \eqref{def:f}, we have   
\begin{equation}
\label{def:simplf}
\begin{split}
&f(x)<0, \quad x\in (0, u), \quad f(x)>0, \quad  x\in (-u, 0),\\
&f(x)=-(1+q)x+\phi(x), \quad x\in I_u,
\end{split}
\end{equation}
where $\phi$ satisfies \eqref{eq:phi}. 
So now the  equation
\begin{equation}
\label{eq:detf}
x_{n+1}=f(x_n) 
\end{equation}
has the unstable zero equilibrium. 
To stabilize the equilibrium, we  introduce  the noise term $\sigma x_n\xi_n$ into the 
right-hand-side of \eqref {eq:detf}. This  leads us to the stochastic difference equation
\begin{equation}
\label{eq:stoch1}
x_{n+1}= f(x_n)+\sigma \xi_{n+1}x_n.
\end{equation}
For the sequence $(\xi_n)_{n\in \mathbb N}$ satisfying  Assumption \ref{as:chixi} we  set
\begin{equation}
\label{def:Theta}
\Theta_n:=1+q-\sigma \xi_{n}
\end{equation}
and note that $\Theta_n$ are also mutually independent  and identically distributed random variables.
By \eqref{def:simplf},  for  $x_n\in I_u$, we have 
\[
 f(x_n)+\sigma \xi_{n+1}x_n=-\Theta_{n+1} x_n+\phi(x_n),
 \]
 so, when $x_n\in I_u$, equation  \eqref{eq:stoch1} can be written as
 \begin{equation}
\label{eq:stoch21}
x_{n+1}=-(1+q-\sigma \xi_{n+1}) x_n+\phi(x_n)=-\Theta_{n+1} x_n+\phi(x_n).
\end{equation}
For $\Theta_n$ defined as in \eqref{def:Theta} we set
\begin{equation}
\label{def:vi}
v_n:=\ln|\Theta_n|,
\end{equation}
and note that $v_n$ are also mutually independent  and identically distributed. 
Also, both $\Theta_n$ and $v_n$,  are bounded:
 \begin{equation}
\label{rel:boundThetav}
\begin{split}
&1+q-\sigma\le \Theta_n\le 1+q+\sigma, \quad \ln|1+q-\sigma|\le v_n\le \ln (1+q+\sigma),\\
& |\Theta_n|\le 1+q+\sigma=:\bar \Theta, \quad |v_n|\le \max\left\{ \left| \ln|1+q-\sigma| \right|, \ln (1+q+\sigma)\right\} =: \bar v.
\end{split}
\end{equation}

In this paper, our main assumption about the values of the coefficient $q$ and of the noise intensity $\sigma $ is 
the following.
\begin{assumption}
\label{as:lambda}
Let Assumption \ref{as:chixi}  hold and $\Theta_n$ be defined as in \eqref{def:Theta}. 
There exists   $\lambda>0$ such that
\begin{equation}
\label{as:Thetalambda}
\mathbf E v_n=  \mathbf E\ln |\Theta_n| = :-\lambda<0.
\end{equation}
\end{assumption}
\begin{remark}
\label{rem:Eln}
As was mentioned in Introduction, Assumption \ref{as:lambda} is fulfilled for many common
independent identically distributed $\xi_n$  when $q$ and $\sigma$ are small enough and $\sigma^2>2q$. 
In Section~\ref{sec:exsim}, we derive $\mathbf E v_n$  for two particular distributions of $\xi_n$, uniform continuous and  Bernoulli. Obtained formulae show that Assumption \ref{as:lambda} is fulfilled for not so small $q$ and $\sigma$.  
Actually in the case of Bernoulli distribution for each $q>0$ we can find $\sigma$ such that Assumption \ref{as:lambda}  holds. Computer simulations of these cases are also provided in Section~\ref{sec:exsim}.
\end{remark}
Whenever \eqref{as:Thetalambda} holds, the application of  the Kolmogorov's  Law of Large Numbers, i.e. Theorem \ref{thm:Kolm} with $v_i:=\ln|\Theta_i|$, $ b_i:=i$,   gives that, a.s.,  
\begin{equation}
\label{rel:KLL}
\frac 1n \sum_{i=1}^n v_i\to -\lambda, \quad \text{as}\,\, n\to \infty.
\end{equation}

Relation \eqref {rel:KLL} implies that $\forall \varepsilon\in (0,\lambda)$, there exists a random 
$\mathcal N_1=\mathcal N_1(\omega, \varepsilon)$ such that $\forall n\ge \mathcal N_1$ we have, a.s.
\begin{equation}
\label{ineq:updown}
-(\lambda+\varepsilon)n<\sum_{i=1}^n v_i\le -(\lambda-\varepsilon)n.
\end{equation}

\section{Main results}
\label{sec:mainres}

In this section we present  three results about a.s. convergence of a solution $z$  to the equilibrium after application of various stochastic controls.

The first result refers to application of only MNC,  when either a solution starts at $I_{\delta, K}$ or an  arbitrary control method brings it into $I_{\delta, K}$ at some a.s. finite random moment $\tau$.

The second result shows that DWC method can actually bring the solution from $I_{u, K}$ to $I_{\delta, K}$ using  a.s. finite number of steps $\tau$.


The third result is a combination of DWC and MNC methods,  when instead of a small neighbourhood $I_{\delta, K}$ we start applying MNC when a solution reaches a much bigger interval $I_{\beta, K}$.

\subsection {MNC only}
\label{subsec:mainse}


Consider an a.s. finite random variable  $\tau$ which takes non-negative  integer values and satisfies  the following assumption.

\begin{assumption}
\label{as:tau1}
Assume that  the random variable $\tau: \Omega \to \mathbb N_0$ is a.s. finite and independent of $\xi_i$ for all $i\in \mathbb N$.
\end{assumption}
Assumption \ref {as:tau1} implies that $\tau$ is also independent of $\Theta_i$, defined by \eqref{def:Theta},  and $v_i=\ln |\Theta_i|$, for all $i\in \mathbb N$.

Assume that  a solution either starts at $I_{\delta, K}$ or an  arbitrary control method brings it into $I_{\delta, K}$. Recall that  MNC  has the form
\begin{equation}
\label{eq:stochmainK}
z_{n+1}= {\rm f}(z_n)+\sigma \xi_{n+1}(z_n-K), \quad z_\tau\in I_{\delta,K},  \quad n\ge \tau.
\end{equation}

\begin{theorem}
\label{thm:main1}
Let   $\gamma \in (0, 1)$, Assumptions~\ref{as:chixi}, \ref{as:fu} and \ref{as:lambda} hold and 
an a.s. finite random moment $\tau$ satisfy Assumption \ref{as:tau1}. Then there exist positive constants 
$\varsigma$, $\eta$ and $\delta$, 
such that for any solution $z_n$  to \eqref{eq:stochmainK} we have 
\begin{enumerate}
\item [(i)] there exists a set $\Omega_\gamma$, $\mathbb P \Omega_\gamma>1-\gamma$,  such that, on $\Omega_\gamma$,
\begin{enumerate}
\item 
$|z_m-K|\le \eta e^{-\varsigma (m-\tau)}, \quad \mbox{for all} \quad m\ge \tau$,
\item $
z_m\to K, \quad \text{as} \quad m\to \infty;
$
\end{enumerate}
\item [(ii)] there exists a set $\Omega^{[1]}_\gamma$, $\mathbb P \Omega^{[1]}_\gamma>1-\gamma$, and a nonrandom $n_0\in \mathbb N$,  such that, on $\Omega^{[1]}_\gamma$,
\[
|z_m-K|\le \eta e^{-\varsigma (m-n_0)}, \quad \forall m\ge n_0.
\]
\end{enumerate}
\end{theorem}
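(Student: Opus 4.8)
The plan is to work with the transformed equation \eqref{eq:stoch21}, where the equilibrium has been moved to $0$, and show that $x_m = z_m - K$ decays geometrically on a set of large probability. First I would record the basic recursion: as long as the trajectory stays in $I_u$, \eqref{eq:stoch21} gives $x_{m+1} = -\Theta_{m+1}x_m + \phi(x_m)$ with $|\phi(x_m)| \le C|x_m|^{1+\kappa}$, so $|x_{m+1}| \le |\Theta_{m+1}||x_m|(1 + C|\Theta_{m+1}|^{-1}|x_m|^{\kappa})$. Iterating from $\tau$, as long as all intermediate iterates lie in $I_u$,
\[
|x_{\tau+n}| \le |x_\tau| \prod_{i=1}^n |\Theta_{\tau+i}| \cdot \prod_{i=1}^n \left(1 + C\,\frac{|x_{\tau+i-1}|^{\kappa}}{|\Theta_{\tau+i}|}\right).
\]
The first product is $\exp\left(\sum_{i=1}^n v_{\tau+i}\right)$, and by Assumption \ref{as:lambda} together with Kolmogorov's LLN (applied to the shifted i.i.d. sequence $(v_{\tau+i})_i$, which is legitimate since $\tau$ is independent of the $\xi_i$ by Assumption \ref{as:tau1}, so the shifted sequence is still i.i.d. with mean $-\lambda$), relation \eqref{rel:KLL} holds for the shifted sums as well. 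Thus on a full-measure set there is a random $\mathcal N_1$ after which $\sum_{i=1}^n v_{\tau+i} \le -(\lambda-\varepsilon)n$, i.e. the first product is bounded by $e^{-(\lambda-\varepsilon)n}$ for $n \ge \mathcal N_1$.

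Next I would handle the correction product. If one can guarantee $|x_{\tau+i-1}|^\kappa \le \rho^i$ for some $\rho \in (0,1)$, then $\sum_i C|\Theta_{\tau+i}|^{-1}|x_{\tau+i-1}|^\kappa$ converges (using the lower bound $|\Theta_i| \ge |1+q-\sigma|$ from \eqref{rel:boundThetav}, assuming this is nonzero — or more carefully, using that $\sum \ln(1+C\bar\Theta^{-1}\rho^i) < \infty$), so the correction product is bounded by a constant $M$. This is essentially a bootstrap: decay of $|x|$ feeds the smallness of the correction, which in turn preserves decay. To make the bootstrap rigorous without circularity, I would fix $\varepsilon \in (0,\lambda)$, then $\varsigma := (\lambda - 2\varepsilon)/2 > 0$ say, and argue as follows. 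The difficulty is that $\mathcal N_1$ is random, so before time $\tau + \mathcal N_1$ we have no geometric control from the LLN; we only know $|v_i| \le \bar v$, so $\prod |\Theta_{\tau+i}| \le e^{\bar v n}$ — an exponential \emph{growth} bound. This is where $\delta$ must be chosen extremely small: over the random but a.s. finite window $[\tau, \tau+\mathcal N_1]$, starting from $|x_\tau| < \delta$, the trajectory can grow by at most a random factor, and we must ensure with probability $> 1-\gamma$ that it neither leaves $I_u$ nor grows too much. Concretely, I would pick a nonrandom $N$ with $\mathbb P\{\mathcal N_1 > N\} < \gamma/2$ (possible since $\mathcal N_1 < \infty$ a.s.), restrict to the event $\Omega_\gamma := \{\mathcal N_1 \le N\} \cap (\text{an event of probability} > 1 - \gamma/2 \text{ controlling the sums})$, and on that event bound $|x_{\tau+n}|$ for $n \le N$ by $\delta e^{\bar v N}$, which is $< u$ once $\delta < u e^{-\bar v N}$. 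Then for $n \ge N$ the LLN bound kicks in and I would close a standard induction showing $|x_{\tau+n}| \le \eta e^{-\varsigma n}$ with $\eta$ depending on $\delta, N, M, \bar v$; shrinking $\delta$ further ensures the correction product stays below $2$ throughout, completing part (i)(a), and (i)(b) is immediate.

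For part (ii), the statement replaces the random $\tau$ by a nonrandom $n_0$. I would choose $n_0$ so that $\mathbb P\{\tau > n_0\} < \gamma/2$ — again possible because $\tau$ is a.s. finite — and set $\Omega^{[1]}_\gamma := \Omega_\gamma \cap \{\tau \le n_0\}$, which has probability $> 1 - \gamma$ after adjusting the constants in the $\gamma/2$ splitting. On this event $\tau \le n_0$, so $m - \tau \ge m - n_0$, hence $e^{-\varsigma(m-\tau)} \le e^{\varsigma n_0} e^{-\varsigma(m-n_0)}$... wait, that inequality goes the wrong way, so instead I would note that on $\{\tau \le n_0\}$ we have $|z_m - K| \le \eta e^{-\varsigma(m-\tau)}$ and simply re-derive the bound anchored at $n_0$ by absorbing the finitely many (at most $n_0$) steps between $\tau$ and $n_0$ into a redefined constant $\eta' := \eta e^{\bar v n_0}$, giving $|z_m - K| \le \eta' e^{-\varsigma(m - n_0)}$ for $m \ge n_0$; relabelling $\eta' $ as $\eta$ (it is legitimate since the theorem only asserts existence of such constants) finishes (ii). The main obstacle, as indicated above, is the control over the random initial window $[\tau, \tau + \mathcal N_1]$: the LLN gives eventual decay but no uniform-in-$\omega$ rate, so one must trade a small probability $\gamma$ for a nonrandom cutoff $N$, and then the price is that $\delta$ shrinks like $u\,e^{-\bar v N}$ — which is exactly the paper's stated phenomenon that the allowed initial neighbourhood can be extremely small.
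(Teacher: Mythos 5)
Your plan mirrors the paper's argument in all essentials: transform to the zero equilibrium, fix a nonrandom threshold $\bar N$ after which the Kolmogorov LLN bounds hold with probability $>1-\gamma/2$, absorb the worst-case exponential growth $e^{\bar v \bar N}$ over the initial window into a very small $\delta$, close the resulting bootstrap induction to get $|x_{\tau+n}| \le \eta e^{-\varsigma n}$, and for part (ii) replace the random $\tau$ by a nonrandom $n_0$ with $\mathbb P\{\tau > n_0\}$ small; the paper makes the random time-shift rigorous through the explicit partition $\Omega_{\tau,\gamma}=\bigcup_k(\Omega_{\tau k}\cap\Omega_{\gamma k})$, which is exactly the content of your observation that $(v_{\tau+i})_i$ is still i.i.d.\ because $\tau$ is independent of the $\xi_i$. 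One small correction: in part (ii) your alarm is false — on $\{\tau\le n_0\}$ one has $m-\tau\ge m-n_0$ directly, hence $e^{-\varsigma(m-\tau)}\le e^{-\varsigma(m-n_0)}$, so no adjustment of $\eta$ is needed (the paper concludes this way without the $e^{\bar v n_0}$ factor).
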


The proof of Theorem \ref{thm:main1} is given in Appendix A, Section \ref{subsec:Prmain1}.


\subsection{DWC reaches $I_{\delta, K}$ in a.s. finite time}
Now consider the case when a solution $z$ 
with $z_0\in I_{u, K}$,  is brought to $I_{\delta,K}$ by DWC method, introduced in \eqref{def:DWIntr} (see details in the  Appendix B). 
Let $\tau$ be  the  moment when  the  solution $z_n$ for the first time  reaches the interval $I_{\delta, K}$,  
\begin{equation}
\label{def:tau}
\tau=\inf\{n: z_n\in I_{\delta, K} \}.
\end{equation}

Assume that for constants $C$, $u$ and $\kappa$ from the Assumption  \ref{as:fu} the following condition holds
\begin{equation}
\label {cond:uCkappa}
Cu^\kappa<\frac{1}{2} \, .
\end{equation}
\begin{theorem}
\label{thm:main2}
Let $\gamma\in (0, 1)$, Assumptions~\ref{as:chixi}, \ref{as:fu}, \ref{as:lambda}  and condition \eqref {cond:uCkappa} 
hold. Then  there exist positive constants $\varsigma$, 
$\eta$, $\delta$ (the same as in Theorem \eqref{thm:main1}), and parameters of  DWC \eqref{def:DWIntr}, such that the moment $\tau$ defined in \eqref{def:tau} satisfies Assumption \ref{as:tau1},  and therefore the statement of Theorem \ref{thm:main1} holds.
\end{theorem}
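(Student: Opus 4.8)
The plan is to establish two things: (1) that the DWC scheme \eqref{def:DWIntr} drives a solution starting in $I_{u,K}$ into the target interval $I_{\delta,K}$ in finitely many steps almost surely, with the stopping time $\tau$ from \eqref{def:tau} being a.s. finite and independent of the $\xi_i$; and (2) that once these properties are verified, Theorem \ref{thm:main1} applies verbatim to the post-$\tau$ dynamics \eqref{eq:MNCintr}/\eqref{eq:stochmainK}. The second part is immediate once the hypotheses of Assumption \ref{as:tau1} are checked, since the controlling noise $(\zeta_n),(\chi_n)$ used in DWC is independent of $(\xi_n)$ by Assumption \ref{as:chixi}, so $\tau$, being a measurable function of $z_0$ and of $(\zeta_n),(\chi_n)$ up to the hitting time, is independent of the entire sequence $(\xi_i)_{i\in\mathbb N}$; and $\tau$ clearly takes values in $\mathbb N_0$.

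The substantive work is part (1). First I would fix a finite chain of nested intervals $J^{(0)}=I_{u,K}\supset J^{(1)}\supset\cdots\supset J^{(k)}=I_{\delta,K}$, with $J^{(j)}=(K-u_j,K+u_j)$ and $u_0=u>u_1>\cdots>u_k=\delta$, and choose the step sizes $\alpha_j$ and noise amplitudes $\ell,\ell_c$ so that the ``directed'' drift term $-\alpha_j(1+\ell_c\chi_{n+1})\,\mathrm{sgn}({\rm f}(z_n)-K)$ reliably moves $z$ toward $K$ by a controlled amount at each step while the map-induced displacement stays controlled. The key point is that by \eqref{repr:rmf}–\eqref{eq:phi} and condition \eqref{cond:uCkappa}, for $z_n\in I_{u,K}$ one has ${\rm f}(z_n)-K = -(1+q)(z_n-K)+\phi(z_n-K)$ with $|\phi|\le C u^\kappa|z_n-K| < \tfrac12|z_n-K|$, so ${\rm f}(z_n)-K$ has the sign opposite to $z_n-K$ and magnitude between $(q+\tfrac12)|z_n-K|$ and $(\tfrac32+q)|z_n-K|$; in particular $\mathrm{sgn}({\rm f}(z_n)-K) = -\mathrm{sgn}(z_n-K)$, so the directed term pushes toward $K$. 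I would then show: on the event that $\chi_{n+1},\zeta_{n+1}$ lie in a favourable sub-interval of their support (which has positive probability $p_1>0$, independently across $n$), one net DWC step starting from $J^{(j-1)}$ lands inside $J^{(j)}$. Iterating, a fixed finite block of, say, $\bar J$ consecutive favourable noise outcomes carries the solution from $J^{(0)}$ all the way into $J^{(k)}=I_{\delta,K}$; Lemma \ref{lem:barprob} (applied to the independent bounded noise sequences) guarantees that such a block of $\bar J$ favourable values occurs after some a.s. finite random index $\mathcal N$, hence $\tau\le\mathcal N+\bar J<\infty$ a.s. The explicit bound on the number of steps is recorded separately in Appendix B.

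The main obstacle I anticipate is the bookkeeping needed to \emph{quantitatively} choose the parameters $k$, $u_j$, $\alpha_j$, $\ell$, $\ell_c$ so that one favourable step genuinely contracts the solution into the next interval \emph{uniformly over that interval} — one must dominate three competing effects simultaneously: the expansive factor $(1+q)$ from the map, the additive noise $\ell\zeta_{n+1}$, and the possibility that the directed step overshoots $K$ and lands on the far side. Near $K$ (small $j$, small $u_j$) the additive noise $\ell\zeta_{n+1}$ dominates and must itself be scaled down, so the amplitudes likely have to depend on $j$; the paper's notation already allows $\alpha_j$ to vary with $j$, and one would similarly localize $\ell$. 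Handling the ``overshoot'' case requires that even when the directed term sends $z$ past $K$, the new distance to $K$ is still no larger than $u_{j}$ — this forces $\alpha_j$ to be comparable to, but not much larger than, $u_{j-1}$, and is where condition \eqref{cond:uCkappa} and the sign structure of ${\rm f}(z)-K$ are essential. Once the single-step contraction is in hand uniformly on $J^{(j-1)}$, the passage to an a.s. finite $\tau$ via Lemma \ref{lem:barprob} is routine, and the measurability/independence of $\tau$ needed for Assumption \ref{as:tau1} follows from the construction. I would defer the detailed choice of constants and the step-count estimate to Appendix B, as the statement itself only asserts existence of such parameters.
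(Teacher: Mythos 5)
Your plan diverges from the paper's at the key mechanism. The paper proves Theorem~\ref{thm:main2} by a one-line reduction: Lemma~\ref{lem:uCkappa} turns condition~\eqref{cond:uCkappa} into the DWC hypotheses~\eqref{cond:Lqno2}, and then Theorem~\ref{thm:prim2} (Appendix~B) shows that with the prescribed parameters the DWC step contracts \emph{for every} $\omega$: $|z_{n+1}-K|\le(1-\mu)|z_n-K|$. Because $\zeta,\chi$ are bounded and $\ell,\ell_c$ are chosen small enough (see \eqref{def:lcBbetasigma}--\eqref{def:ell}), the contraction is a worst-case, not a probabilistic, statement, so $\tau$ is bounded by the deterministic constant $\ln(\delta/u)/\ln(1-\mu)$ and is trivially independent of $(\xi_i)$ since DWC uses only $(\zeta_n),(\chi_n)$. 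That is considerably stronger and simpler than what you aim for.

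Your proposal instead argues via Lemma~\ref{lem:barprob}, as in the paper's Theorem~\ref{thm:main3}, waiting for a block of ``favorable'' $\chi,\zeta$ outcomes. This has a gap you do not close: you must say what the solution does on the complementary event. An unfavorable $(\chi_{n+1},\zeta_{n+1})$ could a priori push $z_{n+1}$ out of the current $J^{(j-1)}$, or even out of $I_{u,K}$, in which case \eqref{def:DWIntr} is no longer defined and the iteration awaiting a favorable block cannot continue. Without an invariance lemma (``$z_n\in I_{u,K}\Rightarrow z_{n+1}\in I_{u,K}$ under every noise realization''), the appeal to Lemma~\ref{lem:barprob} does not get off the ground; and once you prove such invariance with a uniform contraction rate, Lemma~\ref{lem:barprob} becomes unnecessary — which is exactly the paper's route. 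Two smaller mismatches: the paper's transition from $I^{(j)}$ to $I^{(j+1)}$ takes $\bar m=[-\ln a/\ln(1-\mu)]$ steps, not one favorable step as you posit; and the additive amplitude $\ell$ is kept global (bounded by $(1-\rho-\mu)\delta$ in \eqref{def:ell}), not localized shell-by-shell.
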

The proof of Theorem \ref{thm:main2} is given in Appendix B, Section \ref{subsec:Prmain2}.

\subsection{A combined method}
\label{sec:comb}

Let assumptions of Theorem \ref{thm:main1} hold and  $u>\delta$ be the same as in Theorem \ref{thm:main1}.


In this section we combine DWC  \eqref{def:DWIntr} with MNC \eqref {eq:stochmainK},   in order to be able
to start applying   MNC \eqref {eq:stochmainK} in  a  bigger interval  $I_{\beta, K}$,  $u>\beta\gg \delta$ (i.e. with $\beta$ instead of $\delta$).
The size of $I_{\beta, K}$  should be such that for each  $x_n\in I_{\beta, K}$, under further application of $MNC$ at every step, we have
\begin{enumerate}
\item[(i)] $x_{n+1}\in I_{u, K}$;
\item [(ii)] there are $\iota \in (0, 1)$ and $\bar s\in \mathbb N$ such that  $\xi_{n+i}\in (1-\iota,1]$, $ i=1, \dots, \bar s$ implies  $x_{n+\bar s}\in I_{\delta, K}$.
\end{enumerate}
The combined method can be described as follows. For  $x_0\in I_{u, K}$  we apply DWC method \eqref{def:DWIntr} (see Section \ref{sec:DWC} for details) which takes  at most $\bar s_1$ steps to reach $I_{\beta, K}$. As soon the solution is in $I_{\beta}$, we apply MNC method \eqref{eq:stochmainK}  (see Section  \ref{sec:expest} for details). However, at this stage, a solution can get out of 
 $I_{\beta, K}$, but only into $I_{u, K}$  by (i) above. If this happens  we apply  again the DWC method to get the solution back inside  $I_{\beta, K}$ using  at most $\bar s_1$ steps. Note that, once  $x\in I_{u, K}$, also $x_{n+1} \in I_{u, K}$.
To prove that  after a.s. finite numbers of steps the solution gets into $I_{\delta, K}$, we apply Lemma \ref{lem:barprob}. 
As soon as  $x_n\in I_{\delta, K}$, by the results in Section \ref{sec:expest}, a solution becomes asymptotically stable and no more 
DWC is necessary.

In addition to Assumptions \ref{as:chixi}, \ref{as:fu} and \ref{as:lambda} we suppose that
\begin{equation}
\label{ineq:sq}
q<\sigma<2+q;
\end{equation}
\begin{equation}
\label{cond:distr}
\mathbb P\{\xi\in (1-\iota, 1] \}=p_\iota>0, \quad \mbox{for each} \quad \iota\in \left(0, 1-\frac q\sigma \right).
\end{equation}

\begin{remark}
\label{rem:conds}
Condition \eqref {cond:distr} holds in any of the two cases
\\
(a) $\xi_n$ are Bernoulli distributed, $\mathbb P\{\xi=1\}=0.5$,  , $\mathbb P\{\xi=-1\}=0.5$;
\\
(b) $\xi_n$ are continuous random variables with a positive density on $[-1,1]$.

In case (a) we have $p_\iota=0.5$ since 
$
\mathbb P\{\xi>1-\iota\}=\mathbb P\{\xi=1\}=\frac 12.
$

Condition \eqref{ineq:sq} holds, in particular, if $\sigma<2$ and $q<\frac {\sigma^2}2$, since $q <\frac {\sigma^2}2<\sigma$. 
\end{remark}

Define 
\begin{equation}
\label{def:Thetaupdowm}
\underline \Theta_\iota:=1+q-(1-\iota)\sigma, \quad \bar \Theta:=1+q+\sigma.
\end{equation}
By \eqref{cond:distr} we have $1>1-\iota>\frac q\sigma,$ which implies $
 |\underline \Theta_\iota |<1$.
 
\begin{theorem}
\label{thm:main3}
Let $\gamma\in (0, 1)$,  Assumptions~\ref{as:chixi}, \ref{as:fu}, \ref{as:lambda}, conditions \eqref {cond:uCkappa}, \eqref{ineq:sq} and \eqref{cond:distr} hold. Let $\underline \Theta_\iota$ and $\bar \Theta$ be defined as in \eqref{def:Thetaupdowm} and $\beta$ satisfy the inequality
  \begin{equation}
\label{def:beta}
 \beta <\min\left\{\sqrt[\kappa]{\frac{1- |\underline \Theta_\iota |}{C}},  \,\, \frac{u}{\bar\Theta+C}\right\}.
 \end{equation}
 Then there exist  positive constants $\varsigma$, 
$\eta$, $\delta$ (the same as in Theorem \eqref{thm:main1}), and parameters of  DWC \eqref{def:DWIntr}, such that
 a method  which combines application of DWC to $I_{u, K}$ followed by application of MNC to $I_{\beta, K}$ brings solution to  $I_{\delta, K}$  at the moment $\tau$,  which satisfies Assumption \ref{as:tau1},
 and therefore the statement of Theorem \ref{thm:main1} holds.
 \end{theorem}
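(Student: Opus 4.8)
\emph{Proof sketch.} The plan is to argue throughout in the shifted variable $x=z-K$ of \eqref{def:f}, so that the equilibrium sits at $0$, DWC acts exactly as in Appendix~B, and MNC on $I_u$ reads $x_{n+1}=-\Theta_{n+1}x_n+\phi(x_n)$ with $|\phi(x_n)|\le C|x_n|^{1+\kappa}$, cf.\ \eqref{eq:stoch21}--\eqref{eq:phi}. For the prescribed $\gamma$ we fix once and for all the constants $\varsigma,\eta,\delta$ furnished by Theorem~\ref{thm:main1}, and we must exhibit an $\iota$, a $\beta$ obeying \eqref{def:beta}, and DWC parameters, so that the combined scheme is well posed, the solution stays in $I_{u,K}$ until it first meets $I_{\delta,K}$, and $\tau=\inf\{n:x_n\in I_{\delta,K}\}$ is a.s.\ finite. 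Once that is done the conclusion of Theorem~\ref{thm:main1} follows by a hand-off argument: $\tau$ is a stopping time of the joint filtration $(\mathcal F_n)$, and since $(\xi_n)$ is i.i.d.\ (and independent of the DWC noises), the sequence $(\xi_{\tau+n})_{n\ge1}$ that drives MNC \eqref{eq:stochmainK} after $\tau$ is again i.i.d.\ with the law of $\xi_1$ and independent of $\mathcal F_\tau$; this is exactly what the proof of Theorem~\ref{thm:main1} uses Assumption~\ref{as:tau1} for, so that proof applies verbatim from time $\tau$ on.

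\emph{Step 1: the size of $I_{\beta,K}$ and properties (i), (ii).} Pick $\iota\in(0,1-q/\sigma)$, a nonempty range by \eqref{ineq:sq}; using $\sigma<2+q$ one may moreover take $\iota$ large enough that $\underline\Theta_\iota\ge|1+q-\sigma|$, whence $0\le\underline\Theta_\iota<1$ and, for every $\xi\in(1-\iota,1]$, $\Theta=1+q-\sigma\xi\in[1+q-\sigma,\underline\Theta_\iota)$ satisfies $|\Theta|\le\underline\Theta_\iota=:\Lambda<1$. With $\beta$ as in \eqref{def:beta} the first bound gives $C\beta^\kappa<1-\Lambda$, hence $\rho:=\Lambda+C\beta^\kappa<1$; the second bound in \eqref{def:beta}, together with $|\Theta_n|\le\bar\Theta$, $|\phi(x)|\le C|x|^{1+\kappa}$ and \eqref{cond:uCkappa}, yields for $x_n\in I_{\beta,K}$ the one-step estimate $|x_{n+1}|\le|x_n|(\bar\Theta+C|x_n|^\kappa)<u$, i.e.\ property (i). For property (ii): if $x_n\in I_{\beta,K}$ and $\xi_{n+1},\dots,\xi_{n+\bar s}\in(1-\iota,1]$, then inductively $|x_{n+i}|\le\rho^{\,i}|x_n|<\beta<u$, so MNC is indeed applied at each of those steps and \eqref{eq:stoch21} stays valid; taking the fixed integer $\bar s:=1+\lceil\ln(\beta/\delta)/\ln(1/\rho)\rceil$ forces $|x_{n+\bar s}|\le\rho^{\bar s}\beta<\delta$, so $x_{n+\bar s}\in I_{\delta,K}$. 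By \eqref{cond:distr}, $p_\iota:=\mathbb P\{\xi\in(1-\iota,1]\}>0$.

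\emph{Step 2: $\tau<\infty$ a.s.\ and conclusion.} From the DWC analysis of Appendix~B (as in Theorem~\ref{thm:main2}) there is a fixed $\bar s_1\in\mathbb N$ such that DWC started anywhere in $I_{u,K}\setminus I_{\beta,K}$ returns to $I_{\beta,K}$ within $\bar s_1$ steps without leaving $I_{u,K}$; combined with property (i) this shows the combined solution stays in $I_{u,K}$ for all $n\le\tau$. Now apply Lemma~\ref{lem:barprob} with the interval $(1-\iota,1]$ (so $p_1=p_\iota>0$) and $\bar J:=\bar s_1+\bar s-1$: a.s.\ there is a random $\mathcal N<\infty$ with $\xi_{\mathcal N},\dots,\xi_{\mathcal N+\bar s_1+\bar s-1}\in(1-\iota,1]$. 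Put $N:=\mathcal N-1$. If $\tau\le N$ we are already done; otherwise $N<\tau$, hence $x_N\in I_{u,K}$, so within at most $\bar s_1$ steps the solution lies in $I_{\beta,K}$ (immediately if it is already there, otherwise because DWC brings it there independently of $\xi$), say at time $N+\ell$ with $0\le\ell\le\bar s_1$; the indices $N+\ell+1,\dots,N+\ell+\bar s$ then lie in $[\mathcal N,\mathcal N+\bar s_1+\bar s-1]$, so $\xi_{N+\ell+1},\dots,\xi_{N+\ell+\bar s}\in(1-\iota,1]$ and property (ii) gives $x_{N+\ell+\bar s}\in I_{\delta,K}$. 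In either case $\tau\le\mathcal N-1+\bar s_1+\bar s<\infty$. Thus $\tau$ is a.s.\ finite, the hand-off of the first paragraph applies, and the conclusion of Theorem~\ref{thm:main1} holds for the combined scheme.

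\emph{Main obstacle.} The delicate part is Step~1: forcing properties (i) and (ii) to hold simultaneously for a \emph{single} $\beta$ pins down the interplay between $\beta,\iota,C,q,\sigma,\kappa$ encoded in \eqref{def:beta} --- one needs that, under a run of ``good'' noise, the genuine contraction $|\Theta|\le\Lambda<1$ of MNC dominates the perturbation $\phi$ \emph{uniformly} over $I_{\beta,K}$ while no single MNC step escapes $I_{u,K}$, and the admissible choice of $\iota$ (available only because $\sigma<2+q$) is precisely what makes $\Lambda<1$ compatible with the bound on $\beta$. By contrast, once (i), (ii) and the a priori confinement to $I_{u,K}$ are in hand, finiteness of $\tau$ is a clean one-shot application of Lemma~\ref{lem:barprob}: one long enough block of good noise values absorbs the (bounded, $\xi$-oblivious) DWC excursion and then triggers (ii). A secondary, purely bookkeeping point is to route the hand-off to Theorem~\ref{thm:main1} through the strong Markov property of the i.i.d.\ sequence $(\xi_n)$ at the stopping time $\tau$, rather than by literally verifying Assumption~\ref{as:tau1}.
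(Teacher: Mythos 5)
Your proof follows the paper's own blueprint exactly: split off a $\beta_1$-type bound for one-step confinement (property~(i)) and a $\beta_2$-type bound giving geometric contraction during a ``good block'' of $\xi$-values (property~(ii)), then invoke Lemma~\ref{lem:barprob} with the interval $(1-\iota,1]$ and a block of length $\bar s_1+\bar s$ (you use $\bar s_1+\bar s-1$; the paper uses $\bar s+\bar s_1$ --- immaterial) to produce an a.s.\ finite first entry time $\tau$. The decomposition and the key lemma are identical.

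Two places where you are in fact \emph{more} careful than the paper's text deserve mention. First, the choice of $\iota$. The paper's one-step contraction reads $|x_{n+1}|\le|\underline\Theta_\iota||x_n|+C|x_n|^{1+\kappa}$ whenever $\xi_{n+1}\in(1-\iota,1]$; but for such $\xi_{n+1}$ one only has $\Theta_{n+1}=1+q-\sigma\xi_{n+1}\in[1+q-\sigma,\underline\Theta_\iota)$, and if $1+q-\sigma<0$ (which \eqref{ineq:sq} permits) and $\iota$ is small, $|\Theta_{n+1}|$ can exceed $|\underline\Theta_\iota|$. Your explicit requirement $\underline\Theta_\iota\ge|1+q-\sigma|$, attainable precisely because $\sigma<2+q$ by \eqref{ineq:sq}, is exactly what makes the paper's inequality legitimate; this is a genuine and fixable omission that you caught. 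Second, the hand-off. You correctly observe that in the combined scheme $\tau$ does \emph{not} satisfy Assumption~\ref{as:tau1} as literally stated, since $\tau$ depends on past $\xi$'s through the earlier MNC excursions inside $I_{\beta,K}$ and is therefore not independent of the sequence $(\xi_i)$. What the proof of Theorem~\ref{thm:main1} really uses, however, is only that $\{\tau=k\}$ is independent of $\sigma(\xi_{k+1},\xi_{k+2},\dots)$, which is guaranteed when $\tau$ is a stopping time of the full noise filtration --- precisely your strong-Markov hand-off. The paper's phrase ``$\tau$ satisfies Assumption~\ref{as:tau1}'' must be read in that weakened stopping-time sense; your formulation is the one that is literally correct. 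Aside from these two refinements (and the spurious invocation of \eqref{cond:uCkappa} in Step~1 where the working hypothesis is really $\beta<1$, as the paper's Remark~\ref{rem:findbeta1} makes explicit), your argument matches the paper's step for step.
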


The proof of Theorem \ref{thm:main3} is given in Appendix B, Section \ref{subsec:Prmain3}.


\section{Examples and computer simulations}
\label{sec:exsim}

In this section we consider two types of functions $\rm f$, logistic  and Ricker, as well as an additional example of an unbounded $\rm f$,
and two types of random variables $\xi$, continuous uniformly distributed on $[-1, 1]$ random variable and Bernoulli distributed random variable
\begin{equation}
\label{def:bernxi}
\xi:=\left\{ \begin{array}{ll} 
1,  & p=0.5;\\
-1,  & 1-p=0.5. \end{array} \right.
 \end{equation}
For $\xi$ defined by \eqref{def:bernxi} by direct calculations we obtain
\begin{equation}
\label{calc:Evarbern}
-\lambda=\mathbf E \ln |\Theta|=\frac 12\ln \left|(1+q)^2-\sigma^2\right|.
\end{equation}
From \eqref {calc:Evarbern} we conclude  that $\lambda>0$ holds if and only if the following estimation is fulfilled:
\begin{equation}
\label{calc:Evarbernpar}
(1+q)^2-1<\sigma^2< (1+q)^2+1.
\end{equation} 
Estimation  \eqref{calc:Evarbernpar} allows parameters $q$ and $\sigma$ to be quite large. 
So even for the values of the equation parameters when the non-controlled equation is chaotic,
and the derivative $|{\rm f}'(K)|=1+q$ with substantial $q>0$, 
the equilibrium $K$ can be stabilized with the help of a noise of type \eqref{def:bernxi}.  
However, such a stabilization comes at a price: the interval $I_{\delta, K}$ becomes incredibly small.  
In this situation it is reasonable to apply the combined method, see Section \ref{sec:comb}, Section \ref{sec:DWC} from Appendix B, and examples  below. 

In some of the examples we use parameters of MNC and DWC methods, which are defined and discussed in Appendices.

\begin{remark}
\label{rem: comb1}
Condition \eqref{calc:Evarbernpar} immediately implies  \eqref {ineq:sq}.
\end{remark}

If $\xi$  is continuously uniformly distributed on $[-1, 1]$, direct calculations lead to
\begin{equation}
\label{calc:Eunif}
-\lambda=\mathbf E \ln |\Theta|=\frac 1{2\sigma}\bigl[ (1+q+\sigma )\ln(1+q+\sigma ) + (\sigma-1-q)\ln|\sigma -1-q|-2\sigma\bigr].
\end{equation}
From \eqref{calc:Eunif} we conclude  that, for big $\sigma$, the first term in the brackets of the right-hand side is dominating, even though the second term can be negative, see examples and simulations below. So only when the derivative ${\rm f}'(K)= -(1+q)$ is  close to -1 (like $q \leq 0.3$), stabilization with the help of a continuous uniformly distributed noise could be achieved.

In the rest of the section we provide examples with  computer simulations which illustrate our theoretical results and also show that,
for smaller $q$, stabilization is possible with MNC only. 


\subsection{The logistic map.}

Consider the truncated logistic equation
\begin{equation}
\label{eq:log}
z_{n+1}={\rm f}(z_n)= \max\{ r z_n(1-z_n), 0 \},
\end{equation}
which has a nonzero equilibrium at $K=1-\frac 1r$, and the coefficients  in \eqref{repr:rmf}  are: $C=r$, $q=r-3$, $\kappa=1$.    
We can show that $u\le \frac 1{4r}$  and coefficients  in \eqref{cond:Lqno2}  are
\[
\underline L=r-2ur-2, \quad \bar q= r+2ur-3.
\]
A similar to \eqref{eq:log} truncation is applied to MNC
\begin{equation}
\label{eq:logstoch} 
z_{n+1}=\max\left\{ {\rm f}(z_n)+\sigma \xi_{n+1}(z_n-K), 0 \right\}.
\end{equation}
For $\iota$ satisfying \eqref{cond:distr}, applying \eqref{def:beta},  we can show that $\beta$ can be calculated as
\begin{equation}
\label{est:beta}
 \beta< 
 \min\left\{\frac {1-|r-2-(1-\iota)\sigma|}{r}, \quad\frac {u}{2r-2+\sigma}\right\}.
\end{equation}

\subsubsection{Logistic map: stabilization by noise only}
\label{sec:by_noise}

In estimation of the proximity of $x_0$ to the equilibrium required to apply MNC and achieve stabilization, 
very small $\delta$ are obtained.
However, in simulations much wider neighbourhoods allow such a control.
First, we consider stabilization by noise (MNC) only. We use \eqref{eq:logstoch},
otherwise, we do not employ any method to keep a solution in a vicinity
of the equilibrium. The initial value is also not assumed to be in a small neighbourhood of $K$.
  
We apply continuous uniformly distributed on $[-1,1]$ noise in Examples~\ref{ex:un1},\ref{ex:un2}.

\begin{example}
\label{ex:un1}
Let $r=3.1$, then  $q=0.1$, $K=0.677$.  Applying \eqref{calc:Eunif}  with $\sigma=0.4$, we get $\lambda\approx -0.0723<0$,  for $\sigma=0.6$ we get $\lambda\approx -0.0405<0$ 
as illustrated on Fig.~\ref{figure1}, upper left for $\sigma=0.4$ and upper right for $\sigma=0.6$.
In both cases there is no stabilization, as predicted. 

If $\sigma=0.8$ then $\lambda \approx 0.012>0$, for 
$\sigma=0.9$, $\lambda \approx 0.051>0$, 
see Fig.~\ref{figure1}, lower left for $\sigma=0.8$ and lower, right for $\sigma=0.9$. 
In both cases we observe stabilization
by MNC, slower for $\sigma=0.8$ and faster for $\sigma=0.9$.
\begin{figure}[ht]
\centering
\includegraphics[height=.16\textheight]{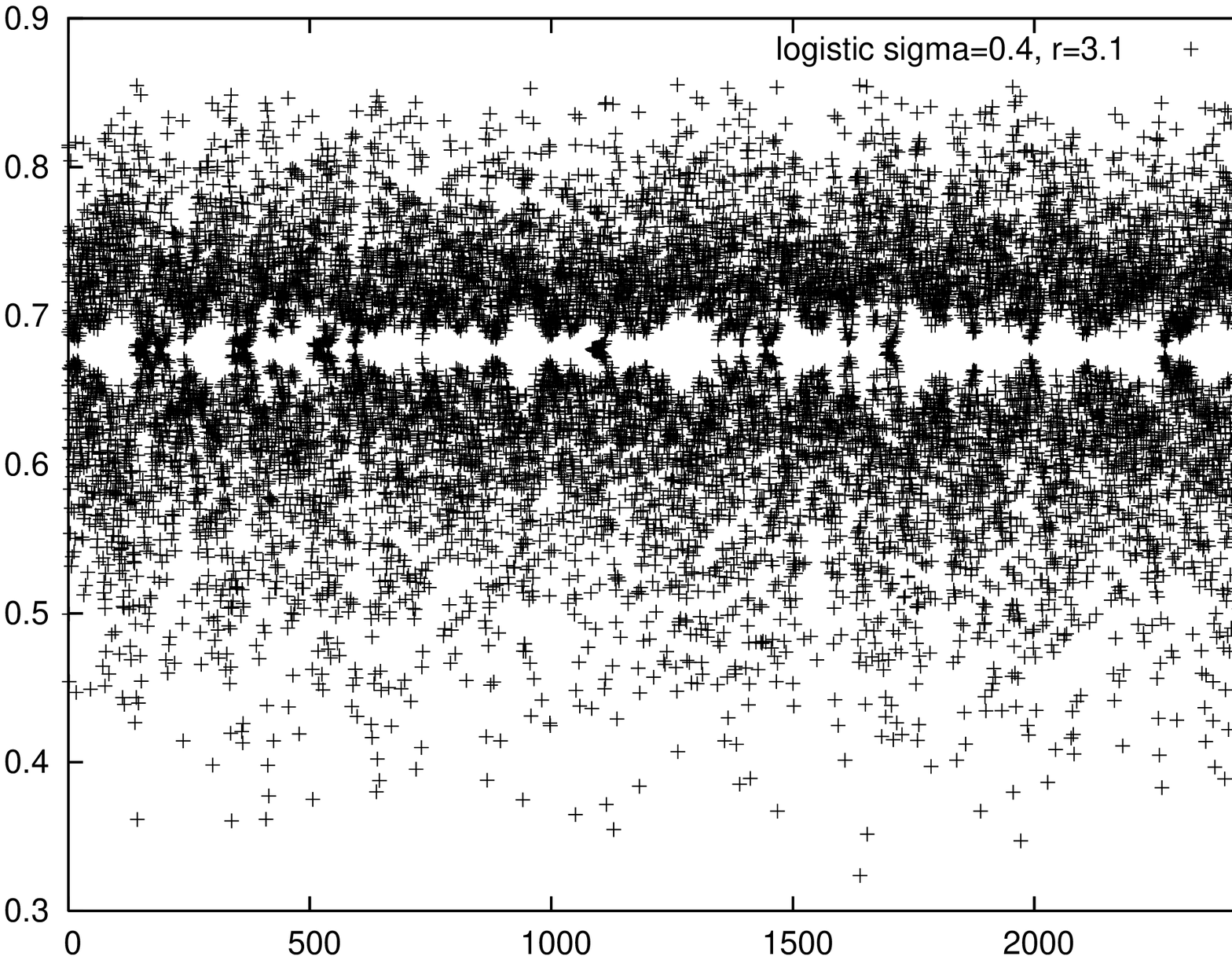}
\hspace{6mm} 
\includegraphics[height=.16\textheight]{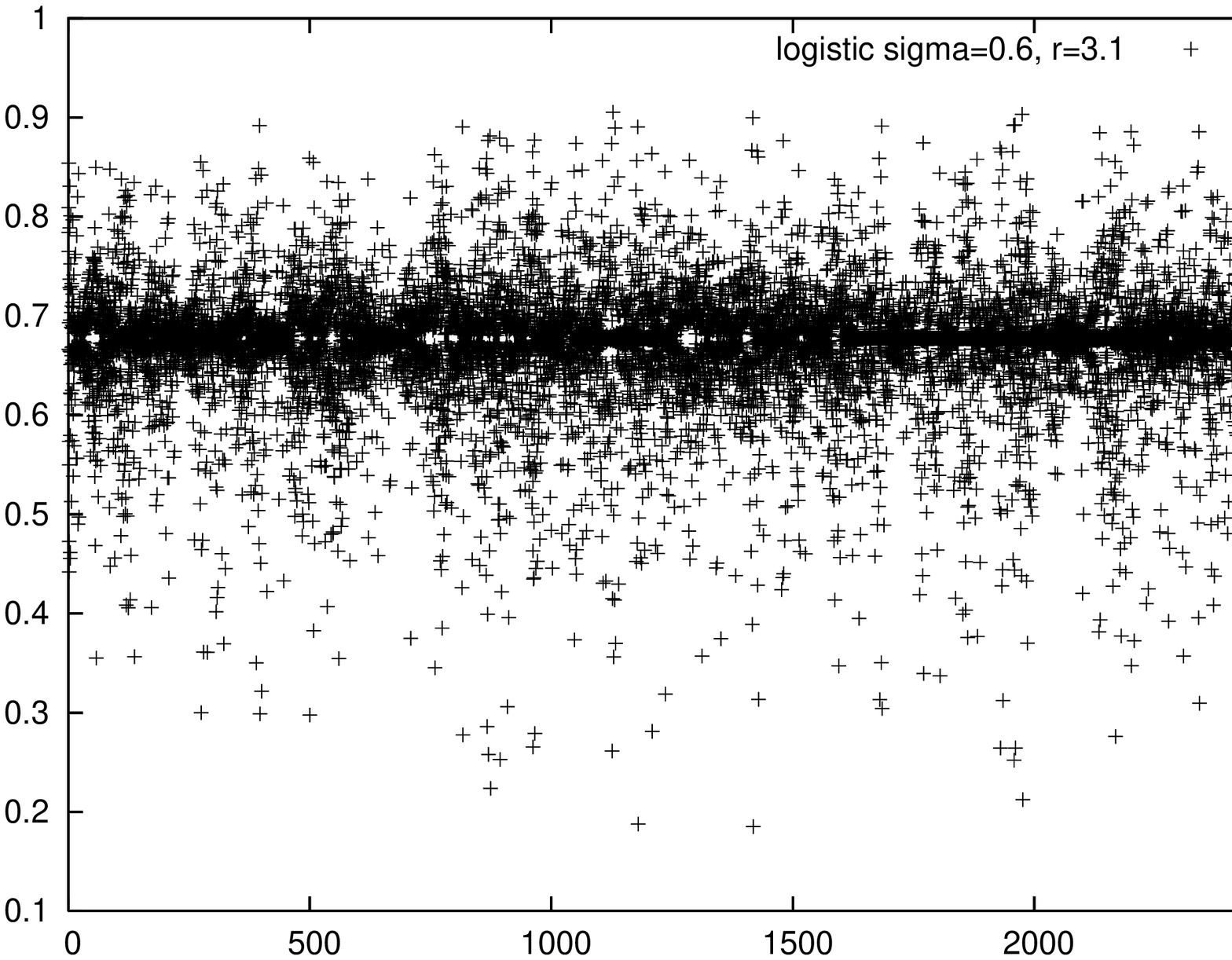}
\vspace{6mm}

\includegraphics[height=.16\textheight]{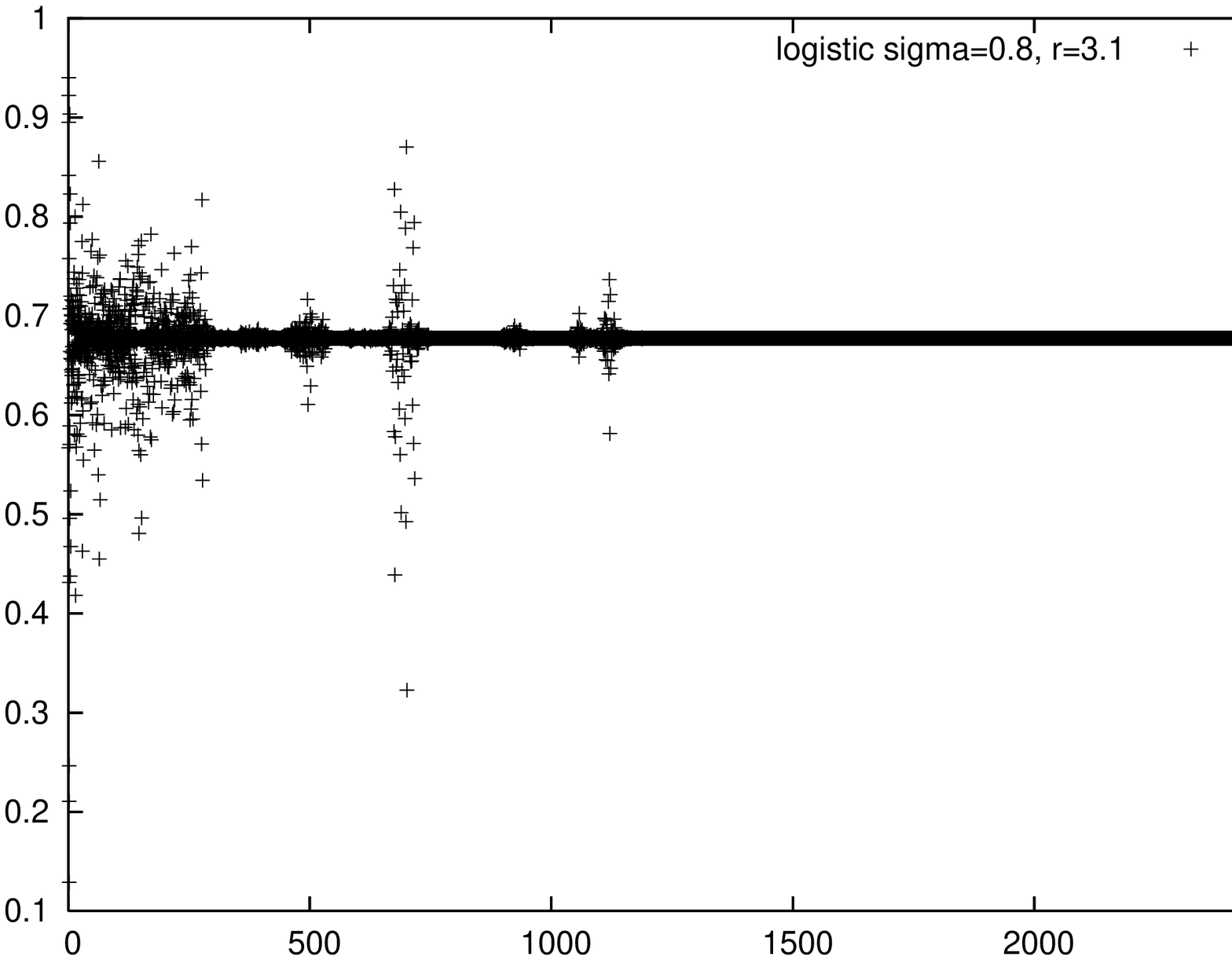}
\hspace{6mm}
\includegraphics[height=.16\textheight]{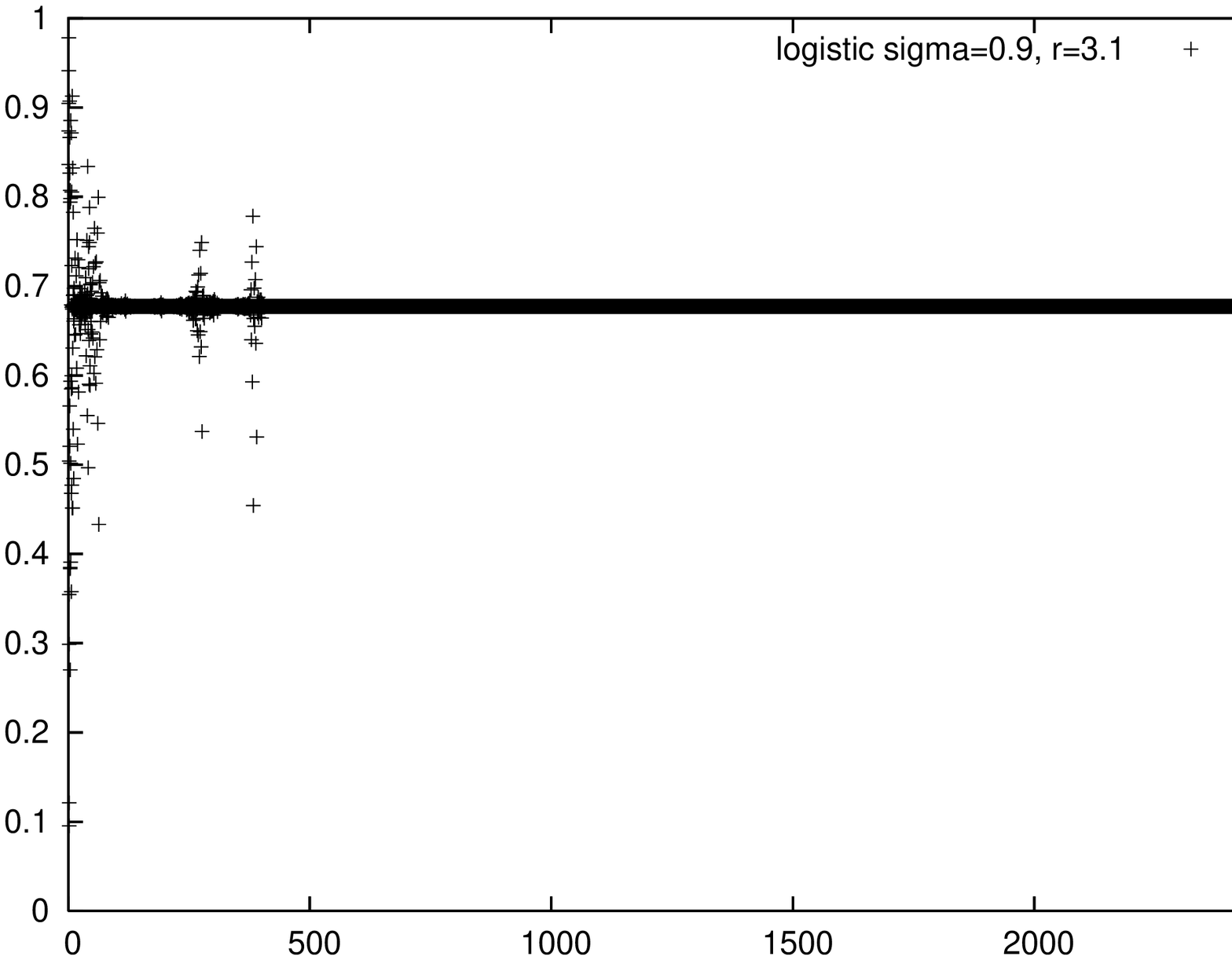}
\vspace{7mm}

\caption{Five runs of the perturbed logistic equation (\protect{\ref{eq:log}}) with $r=3.1$, uniformly distributed in $[-1,1]$ noise,
$x_0=0.4$ and $\sigma=0.4,0.6,0.8,0.9$ (from left to right), respectively.
}
\label{figure1}
\end{figure}

\end{example}

Next, we illustrate the interplay between $q$ and $\sigma$ in MNC: the larger is $q$, the bigger $\sigma$ is required for stabilization.

\begin{example}
\label{ex:un2} 
Let $r=3.3$, then $K=0.7$, $q=0.3$. For $\sigma=0.8$ and $\sigma=1.2$ corresponding $\lambda$'s are negative ($\approx -0.19$ and $\approx -0.05$, respectively), 
so MNC does not stabilize the equation, see Fig.~\ref{figure2}, the upper row.
However, compared to $\sigma=0.8$, see Fig.~\ref{figure2}, upper left, for $\sigma=1.2$, upper right, the solution is more concentrated around $K=0.7$.

For $\sigma=1.3$, we get $\lambda \approx 0.044 >0$,
and MNC is stabilizing, see Fig.~\ref{figure2}, lower left. For $\sigma=1.4$,  $\lambda\approx 0.1245 >0$ guarantees  
convergence (Fig.~\ref{figure2}, lower right).
\begin{figure}[ht]
\centering
\includegraphics[height=.16\textheight]{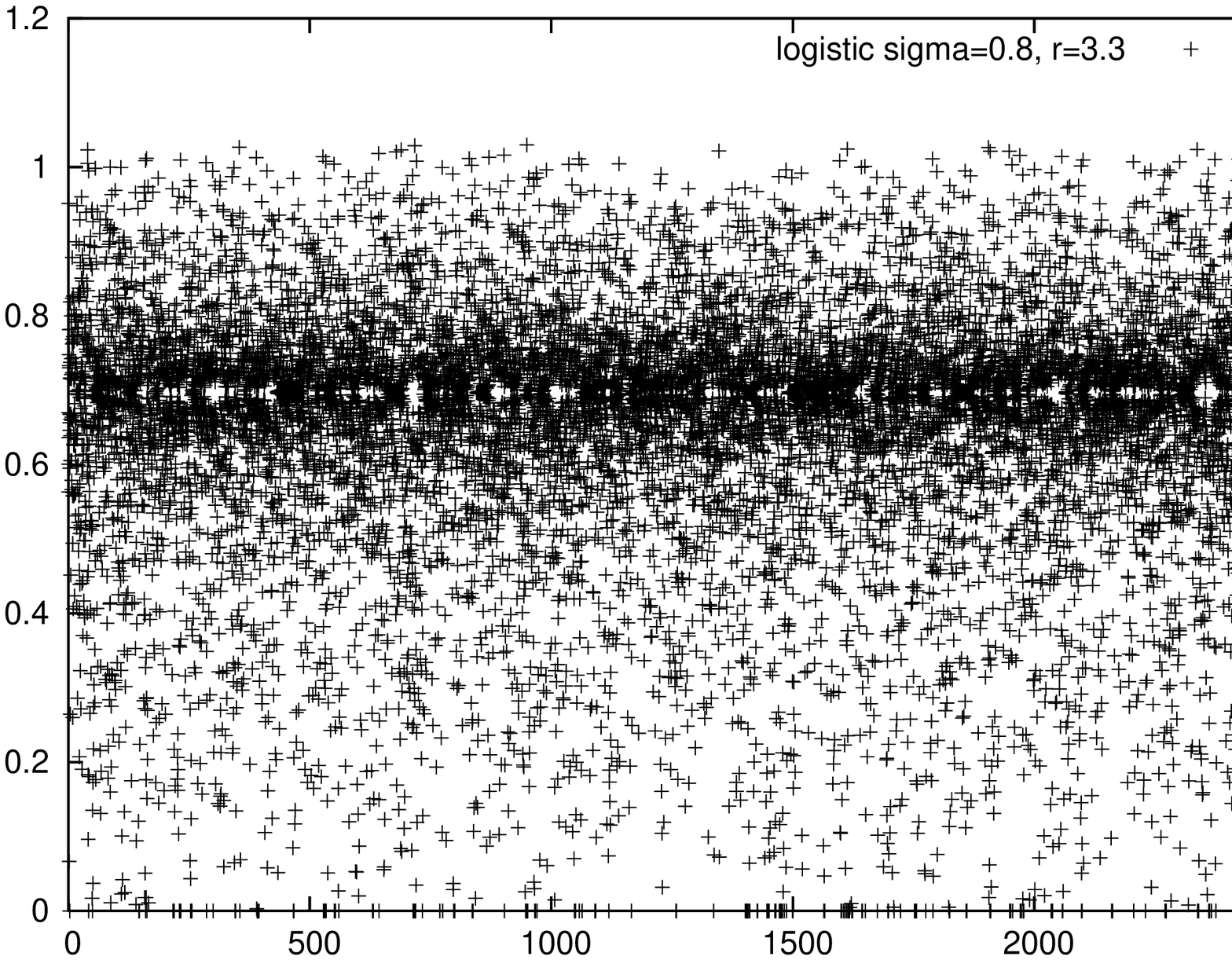}
\hspace{6mm}
\includegraphics[height=.16\textheight]{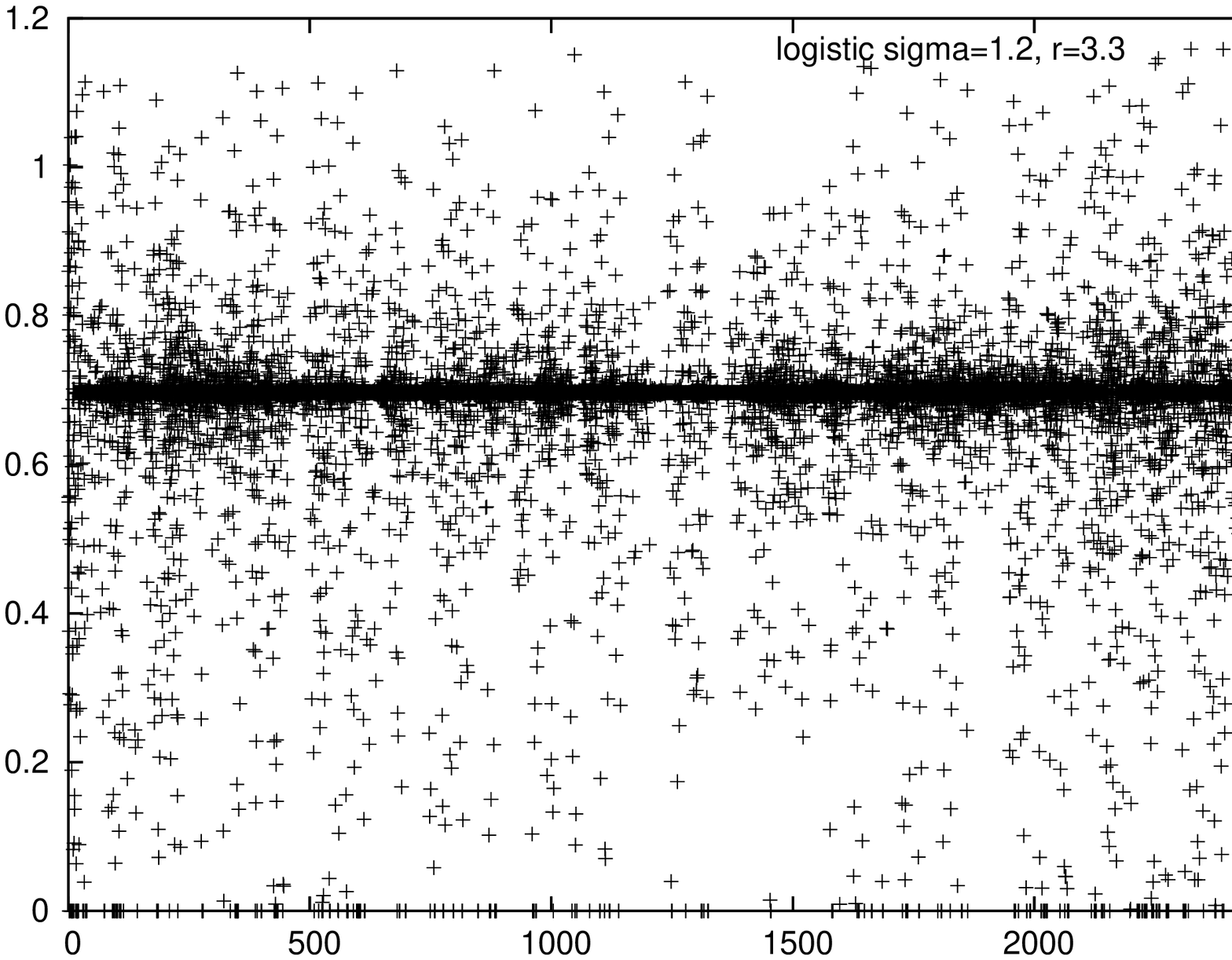}
\vspace{6mm}

\includegraphics[height=.16\textheight]{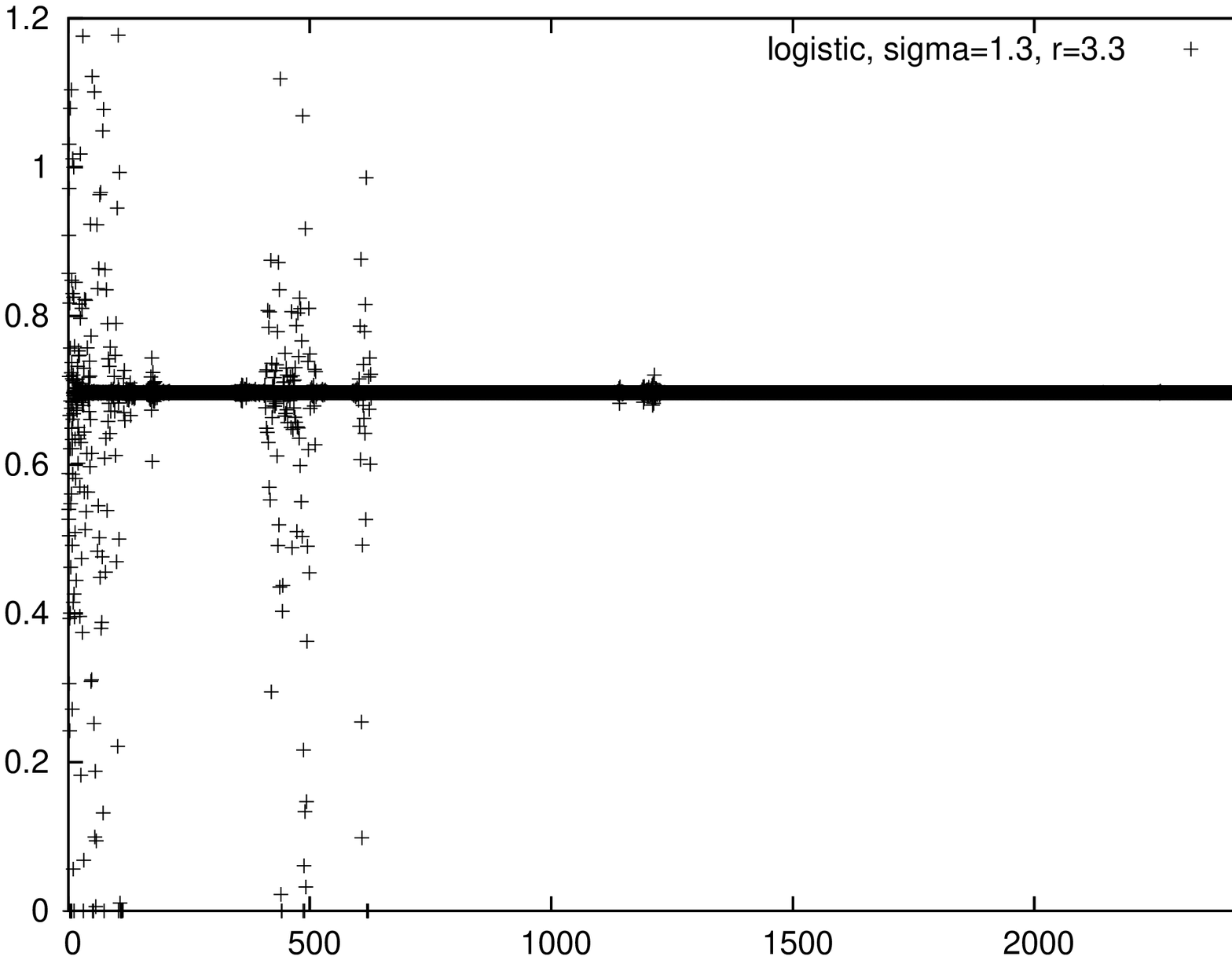}
\hspace{6mm}
\includegraphics[height=.16\textheight]{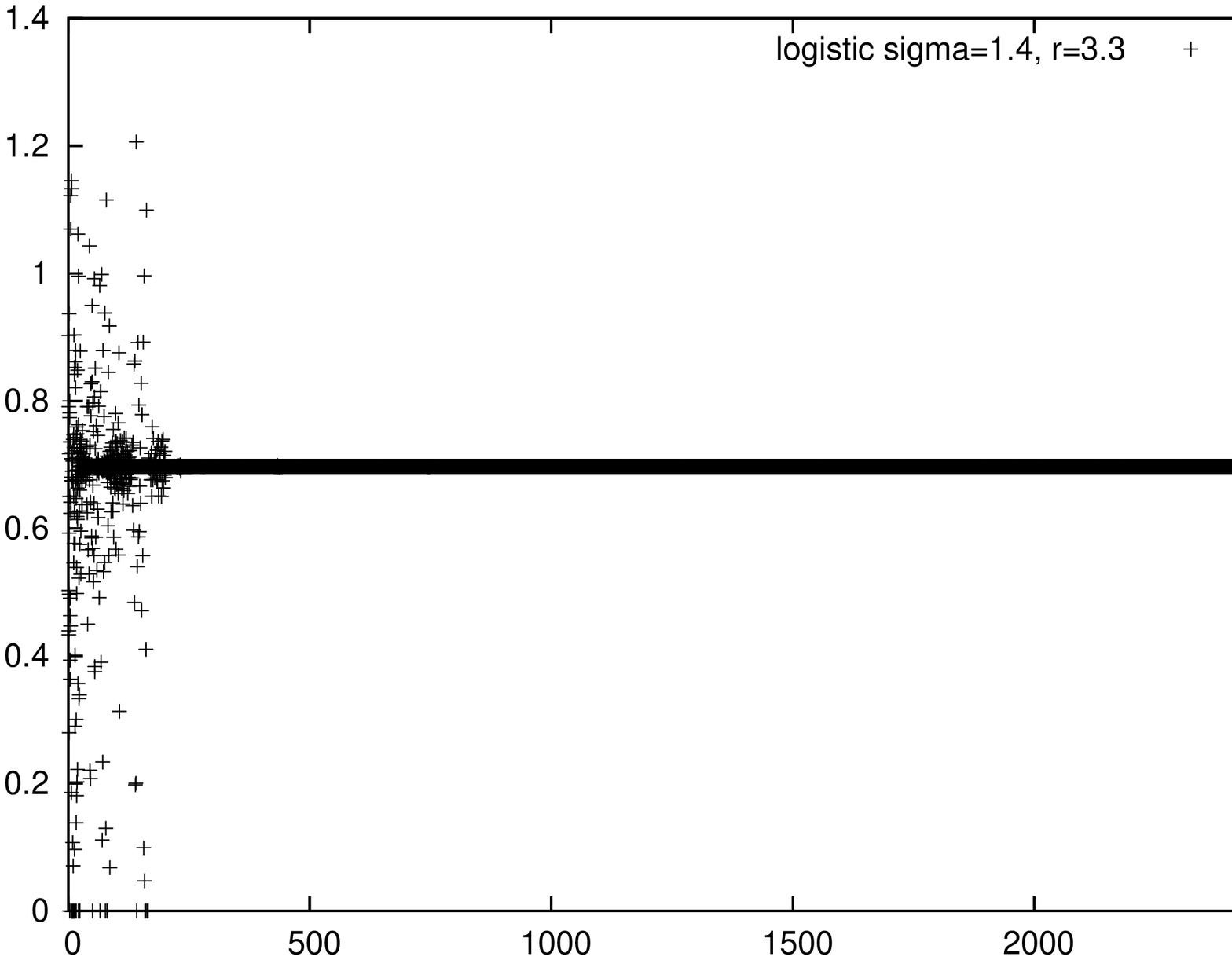}
\vspace{7mm}

\caption{Five runs of the perturbed logistic equation (\protect{\ref{eq:log}}) with $r=3.3$, uniformly distributed in $[-1,1]$ noise, $x_0=0.4$ and $\sigma=0.8,1.2,1.3,1.4$, respectively.
}
\label{figure2}
\end{figure}
\end{example}


Next, we proceed to the logistic map with a Bernoulli distributed $\xi$.

\begin{example}
Let $r=3.3$, then $q=0.3$, $K=0.7$. Substituting $\sigma=0.6$ in \eqref{calc:Evarbernpar} we conclude that 
stabilization is not guaranteed,
see Fig.~\ref{figure6}, left. However, for $\sigma=0.85$ MNC
provides stabilization, as illustrated in Fig.~\ref{figure6}, right. 
\begin{figure}[ht]
\centering
\includegraphics[height=.16\textheight]{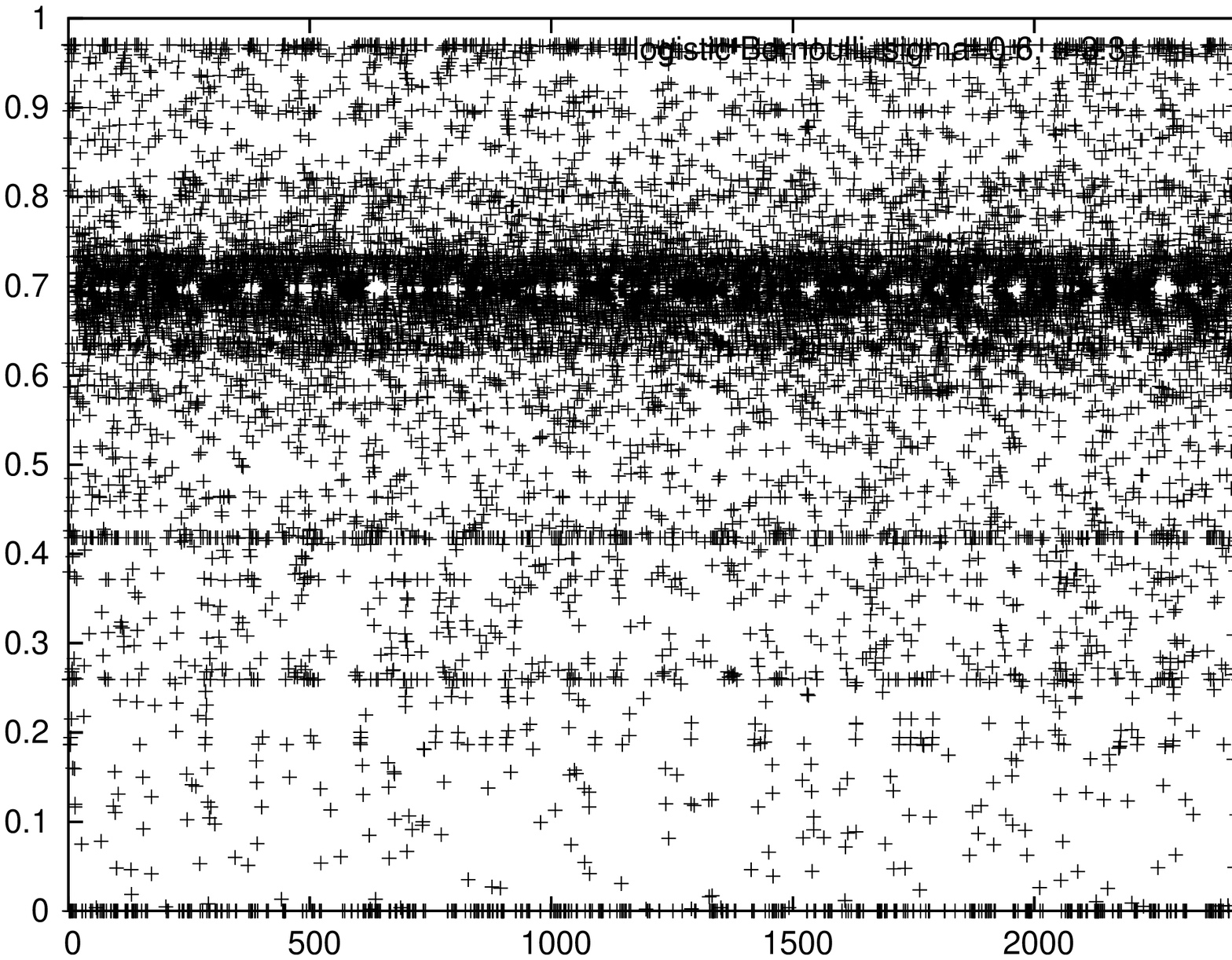}
\hspace{6mm}
\includegraphics[height=.16\textheight]{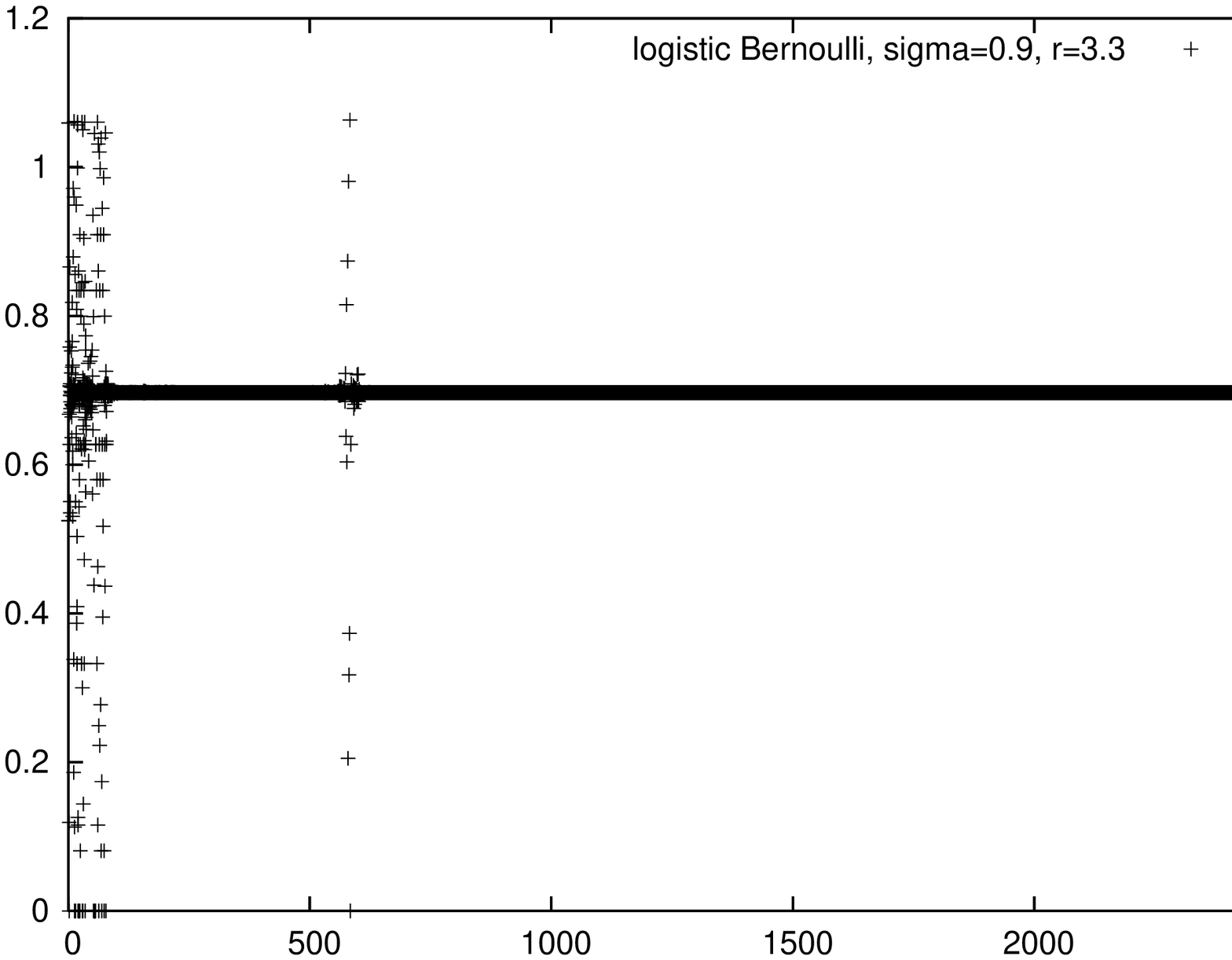}
\vspace{7mm}

\caption{Five runs of the perturbed logistic equation (\protect{\ref{eq:log}}) with noise (\protect{\ref{def:bernxi}}),
$r=3.3$,
$x_0=0.4$ and $\sigma=0.6$ (left) and $\sigma=0.85$ (right).
}
\label{figure6}
\end{figure}
\end{example}

\subsubsection{The logistic map: stabilization with the combined method}

Finally, we consider the case when a combination of DWM and MNC is required to achieve stabilization.

\begin{example}
\label{ex_chaos_1}
For  $r=4$ we have  $q=1$. We can show that the parameters $\sigma=2.1$, $\lambda=0.4457$, $u=0.05$, $\beta=0.00617$ satisfy 
\eqref{calc:Evarbernpar}, \eqref{as:Thetalambda}, \eqref{est:beta}.

In order to estimate $\delta$ by \eqref{def:delta} we need to have $\bar N$ and $M$, 
see \eqref{ineq:prob1}, \eqref{def:barTvM}, \eqref{def:barN2}. 
For $\gamma=0.9$ using  Chebyshev's inequality we get $\bar N=272$, 
by the normal approximation we have $\bar N=74$. 
The number $M=\bar \Theta ^{\bar N-1}=4.1^{\bar N-1}$ is big even for  $\bar N=74$, which implies that $\delta$ is very small.
So in this case it is reasonable to apply the combined  method.

In numerical implementations, we choose $\sigma = 2.1$, 
$u=0.125$, $\beta=0.015$. For the parameters of DWM,
we use $a=1.25$, $\displaystyle \alpha_j= 2.2 a^{-(j+1)}u$, see Fig.~\ref{figure7}.
Similarly to Section~\ref{sec:by_noise}, we illustrate by simulations that stabilization is possible even in the case when
the procedure is more general than theoretically described. In Section~\ref{sec:by_noise}, we 
stabilized a positive equilibrium with noise only. 
Here, we implement a combined algorithm with $u$ and $\beta$ 
larger than theoretically predicted, which is illustrated in Fig.~\ref{figure7}.
\begin{figure}[ht]
\centering
\includegraphics[height=.16\textheight]{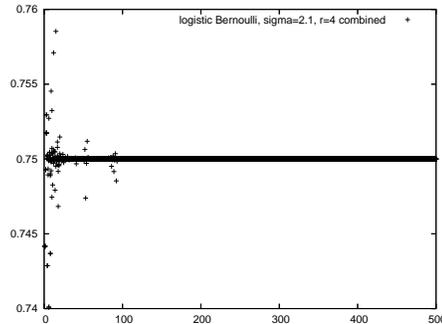}
\vspace{7mm}

\caption{Six runs of perturbed logistic equation (\protect{\ref{eq:log}})
with $r=4$ and $\sigma=2.1$, $u=0.125$, $\beta=0.015$ 
and  noise (\protect{\ref{def:bernxi}}). 
}
\label{figure7}
\end{figure}
\end{example}


\subsection{The Ricker map}

Consider the Ricker equation
\begin{equation}
\label{eq:Ricker}
z_{n+1}={\rm f}(z_n)=z_n e^{r(1-z_n)}, \quad z_n\in [0, \infty), \quad n \in {\mathbb N}_0,
\end{equation}
which has a positive equilibrium at $K=1$, $f'(1)=1-r$, $q=r-2$.

\begin{example}
\label{ex_Rick1}
Let  $r=2.3$, then $q=0.3$. First, consider $\sigma=1.2$ 
with $\xi$ continuous uniformly   distributed in $[-1,1]$.
Applying  \eqref{calc:Eunif}  we get $\lambda=-0.05<0$,
the same inequality is valid for $\sigma=1$.
Fig.~\ref{figure3} illustrates that we do not observe stabilization for $\sigma=1$ (upper left)
and $\sigma=1.2$ (upper right).
For $\sigma=1.22$ and $\sigma=1.24$, we also get similar inequalities:  $\lambda=-0.037<0$ and 
$\lambda=-0.023<0$, respectively.
However, on Fig.~\ref{figure3} for $\sigma=1.22$ (lower left) and $\sigma=1.24$ (lower right) we observe stabilization, which does not contradict  to our theory, as the results are just  sufficient.
\begin{figure}[ht]
\centering
\includegraphics[height=.16\textheight]{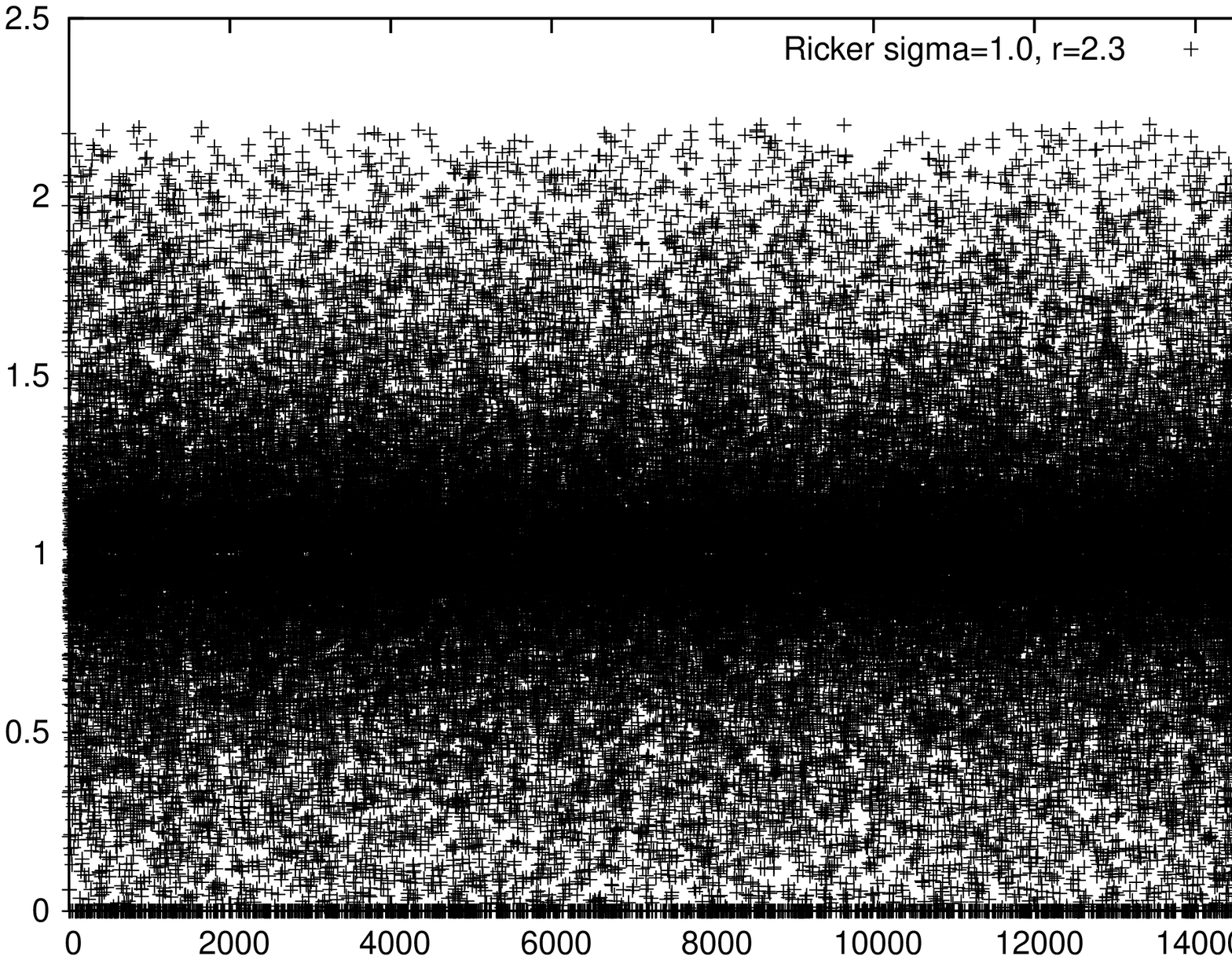}
\hspace{6mm}
\includegraphics[height=.16\textheight]{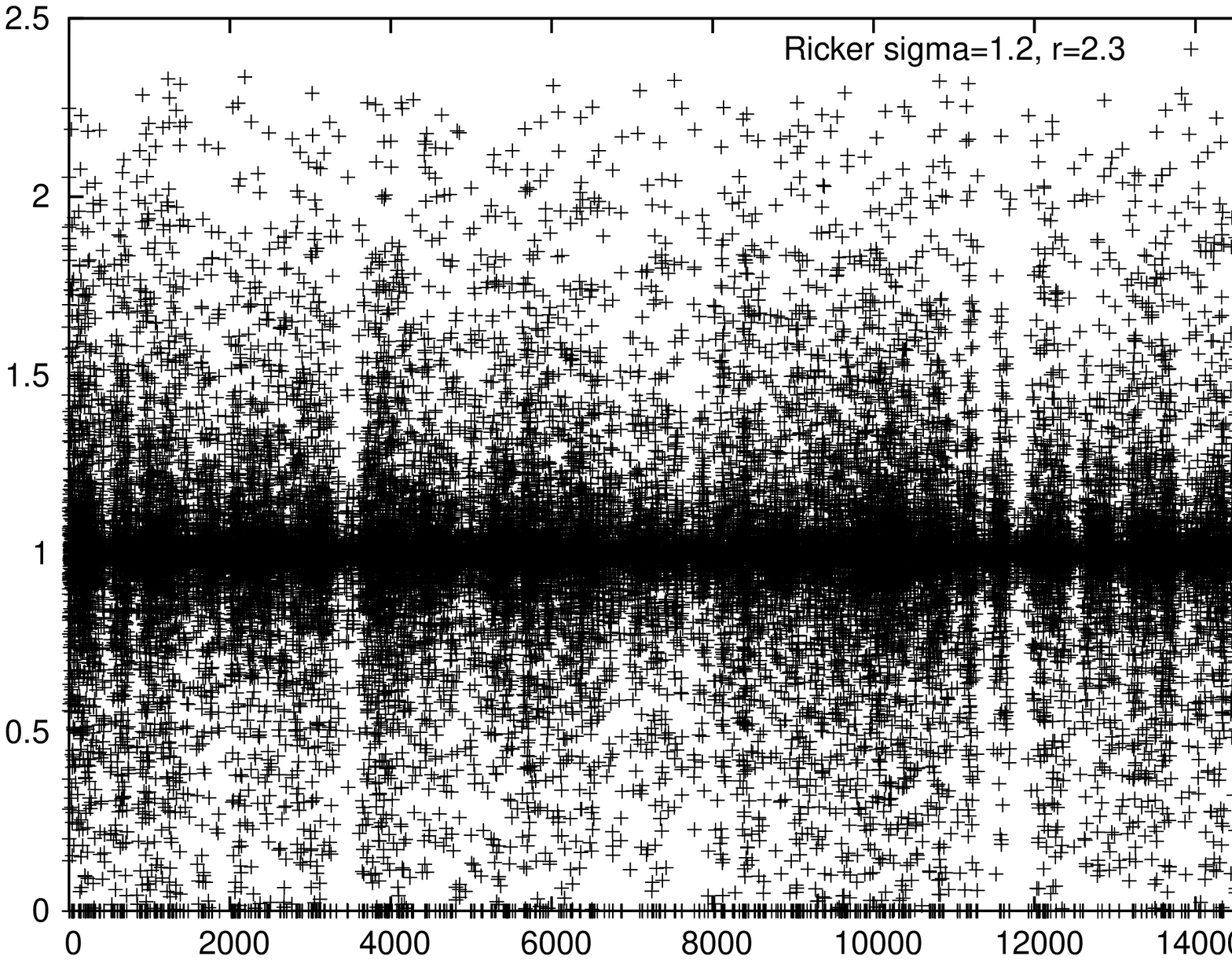}
\vspace{6mm}

\includegraphics[height=.16\textheight]{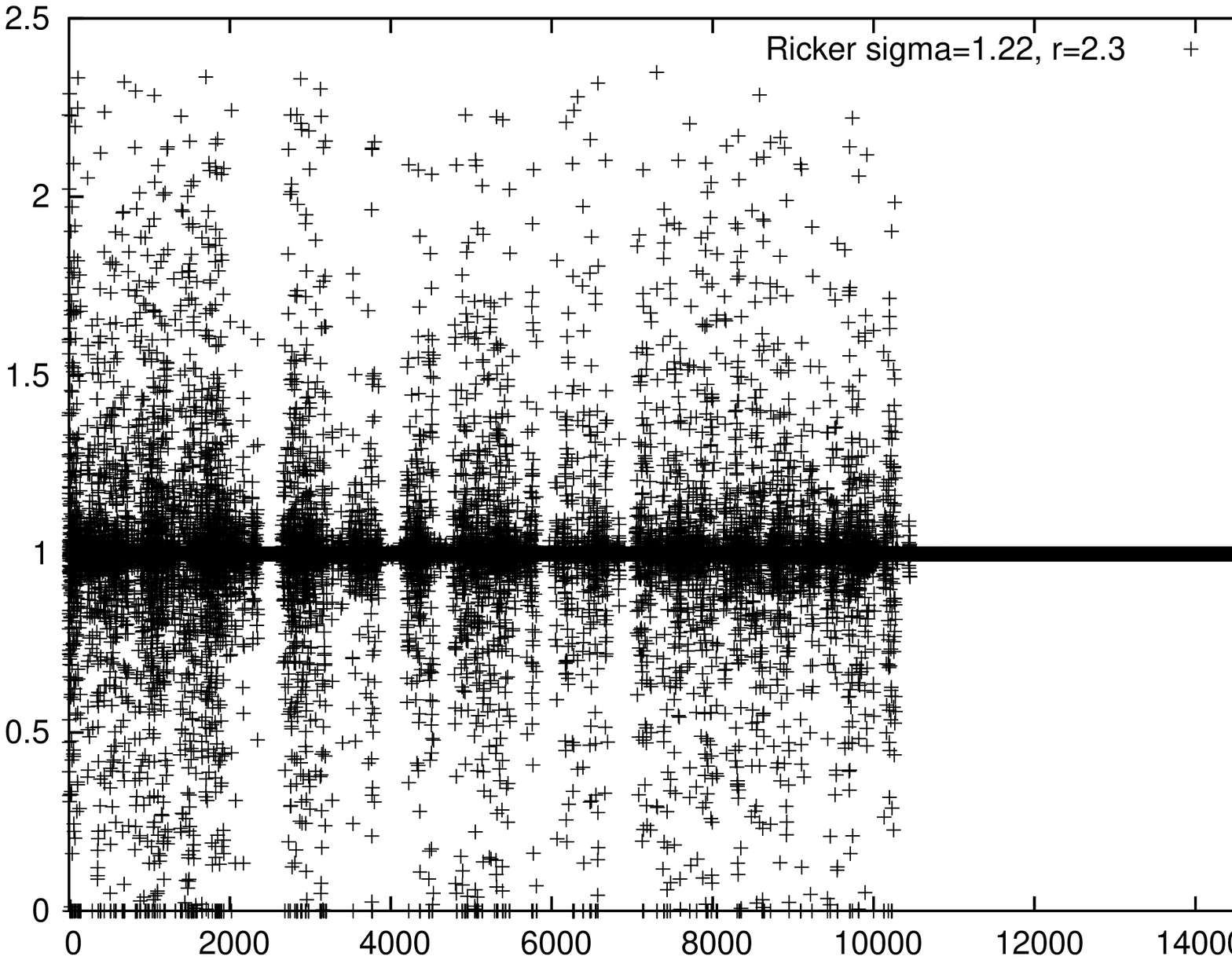}
\hspace{6mm}
\includegraphics[height=.16\textheight]{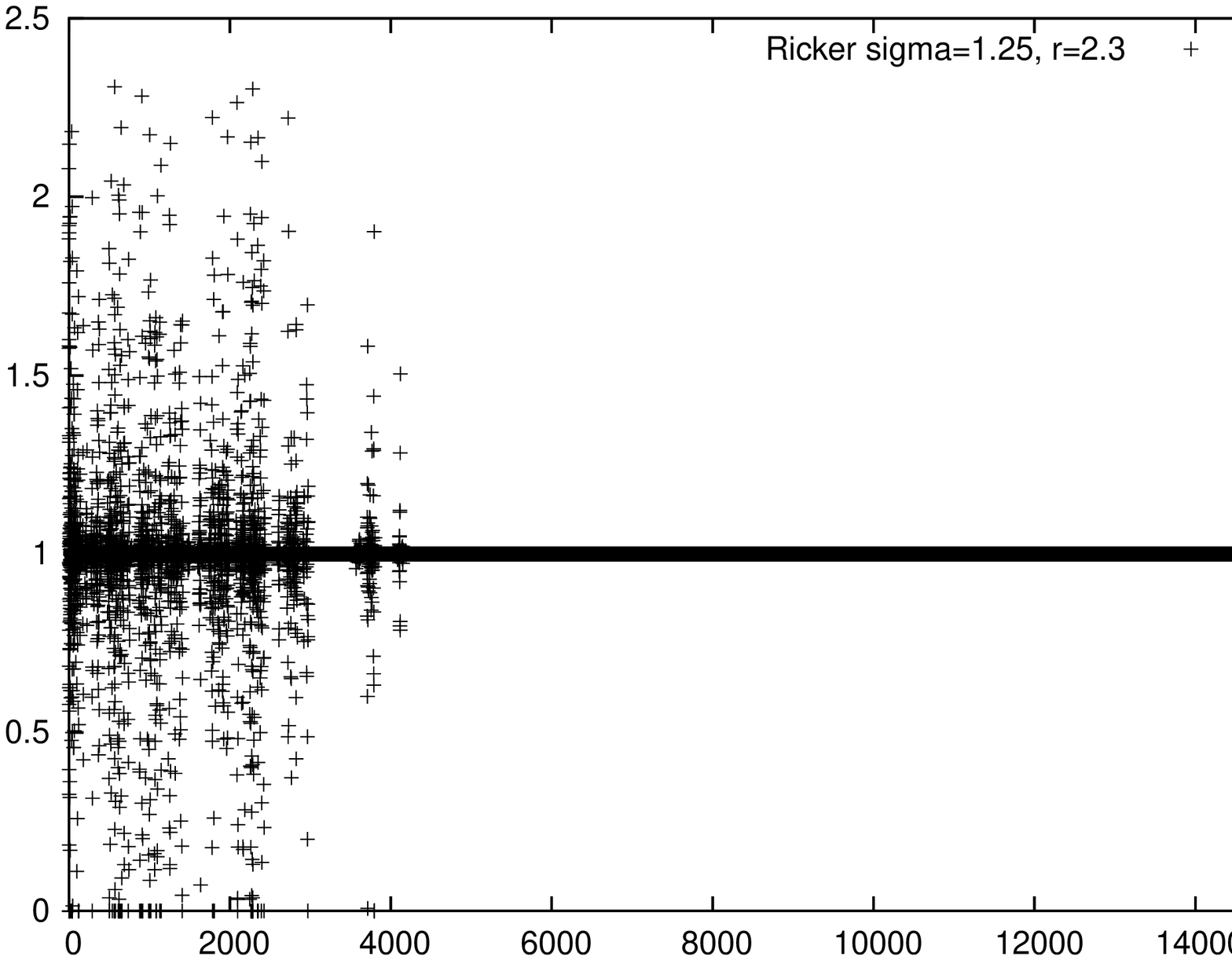}
\vspace{7mm}

\caption{Five runs of the perturbed Ricker equation (\protect{\ref{eq:Ricker}}) with $r=3.3$, uniformly distributed in $[-1,1]$ noise, $x_0=0.4$ and $\sigma=1.0,1.2,1.22,1.24$ (from left to right), respectively.
}
\label{figure3}
\end{figure}
\end{example}

Next, we proceed to a larger $r$, for which the value of $\delta$ is extremely small and just MNC, starting from 
a bigger than $I_\delta$ interval,  cannot stabilize a solution.
Similarly to the logistic case, we consider the Bernoulli distribution.

For $r\ge 3$ and $u$ small enough, we have,
$$
\max_{z\in  I_{u, K}}|{\rm f}'(z)|=|{\rm f}'(1-u)|=e^{r(1-1+u)}|1-r(1-u)|=e^{ru}(r(1-u)-1),
$$
$$
\min_{z\in  I_{u, K}}|{\rm f}'(z)|=|{\rm f'}(1+u)|=e^{r(1-1-u)}|1-r(1+u)|=e^{-ru}(r(1+u)-1).
$$
So we can set
$\displaystyle
\underline L:=e^{-ru}(r(1+u)-1), \quad 1+\bar q:=e^{ru}(r(1-u)-1)
$
and find such $u$ that $\underline L>\bar q.$  We claim that, for $r=3$, it is possible to take $u=0.1$. Indeed, $1+\bar q=2.278$, $\underline L=1.71$ and $\bar q=2.278-1=1.278<1.71=\underline L.$

So on $I_{0.1,\,  1}$  estimate \eqref{cond:Lqno2}  takes place with 
$\bar q=1.278$, $\underline L=1.71,$
and we can find $\beta$ for construction of the DWC algorithm.
However, in numerical runs, we illustrate that a bigger $u$ can work as well.

\begin{example}
\label{ex_chaos_2}
Let $r=3$, $\sigma = 2.1$,  with noise \eqref{def:bernxi}, then condition \eqref{calc:Evarbernpar}  holds. 
We take $u=0.125$ and $\beta=0.015$. For the parameters of DWC, 
we use $a=1.25$, $\displaystyle \alpha_j= 3 a^{-(j+1)}u$. In Fig.~\ref{figure8}, we observe stabilization.
Note that the map is chaotic.

\begin{figure}[ht]
\centering
\includegraphics[height=.18\textheight]{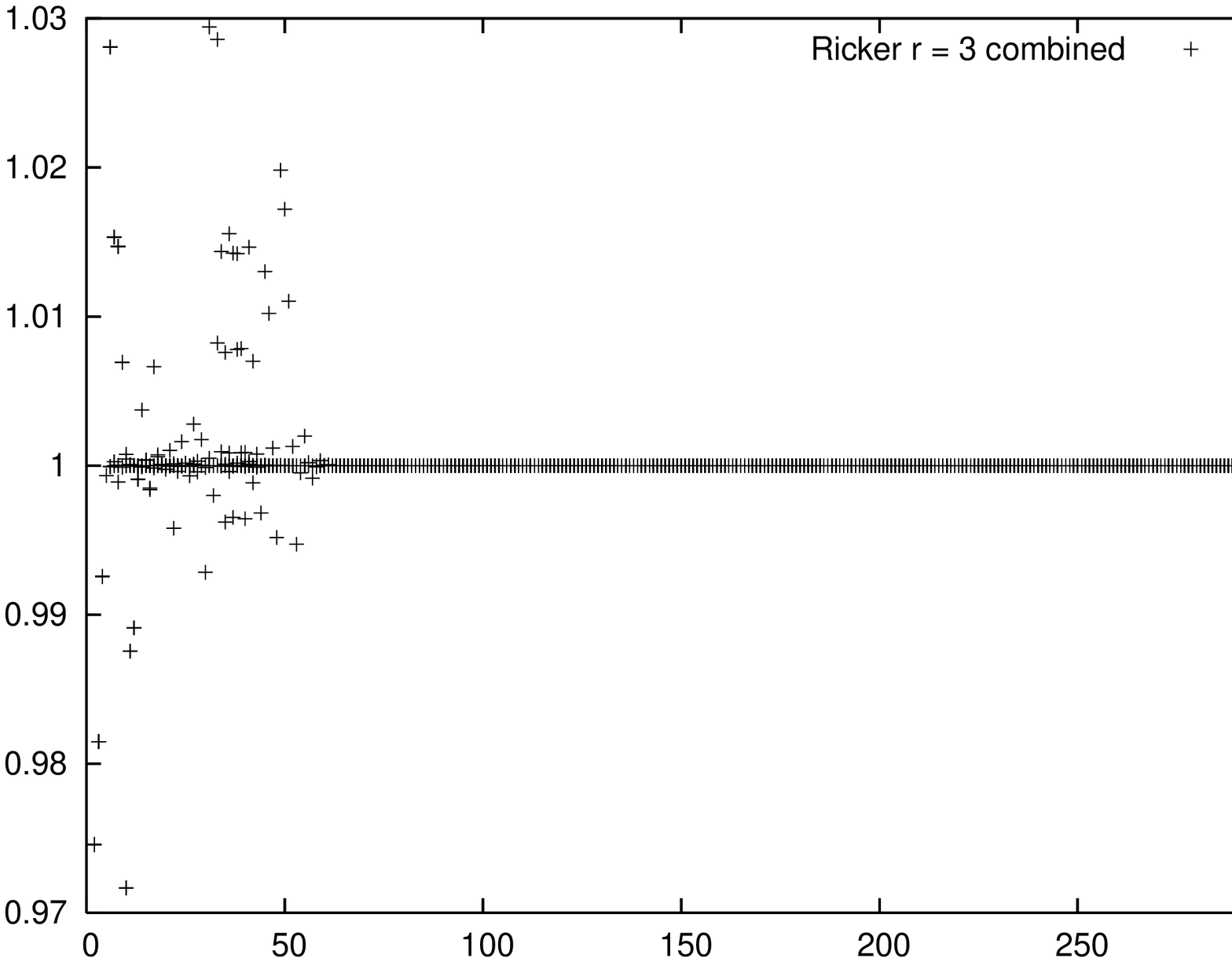}
\vspace{7mm}

\caption{Six runs of perturbed Ricker equation (\protect{\ref{eq:Ricker}})
with $r=3$ and $\sigma=2.1$, $u=0.125$, $\beta=0.015$, noise (\protect{\ref{def:bernxi}}). 
}
\label{figure8}
\end{figure}
\end{example}

\subsection{An example with $\kappa<1$}

Finally, we consider an unbounded function for which, to the best of our knowledge, none of the previous results 
allows to prove the possibility of stabilization by noise.

\begin{example}
Consider the equation
\begin{equation}
\label{ex9eq1}
x_{n+1}=1.5 (x_n-1)+0.5 |x_n-1|^{3/2}+1+\sigma(x_n-1)\xi_{n+1}
\end{equation}
which involves $\rm f$ satisfying \eqref{repr:rmfintr} with $q=0.5$, $K=1$, $\kappa=0.5$. 
Thus, $\rm f$ is not Lipschitz  at $K=1$, and all the results \cite{Medv} fail for this case. 

We use either uniform continuous or Bernoulli distribution in \eqref{ex9eq1} with $\sigma=1.8$, see Fig.~\ref{figure8a}, 
left and right, for the initial value $x_0=1+10^{-11}$ and $x_0=1+10^{-8}$, respectively.

\begin{figure}[ht]   
\begin{center}
\includegraphics[height=0.3\textheight]{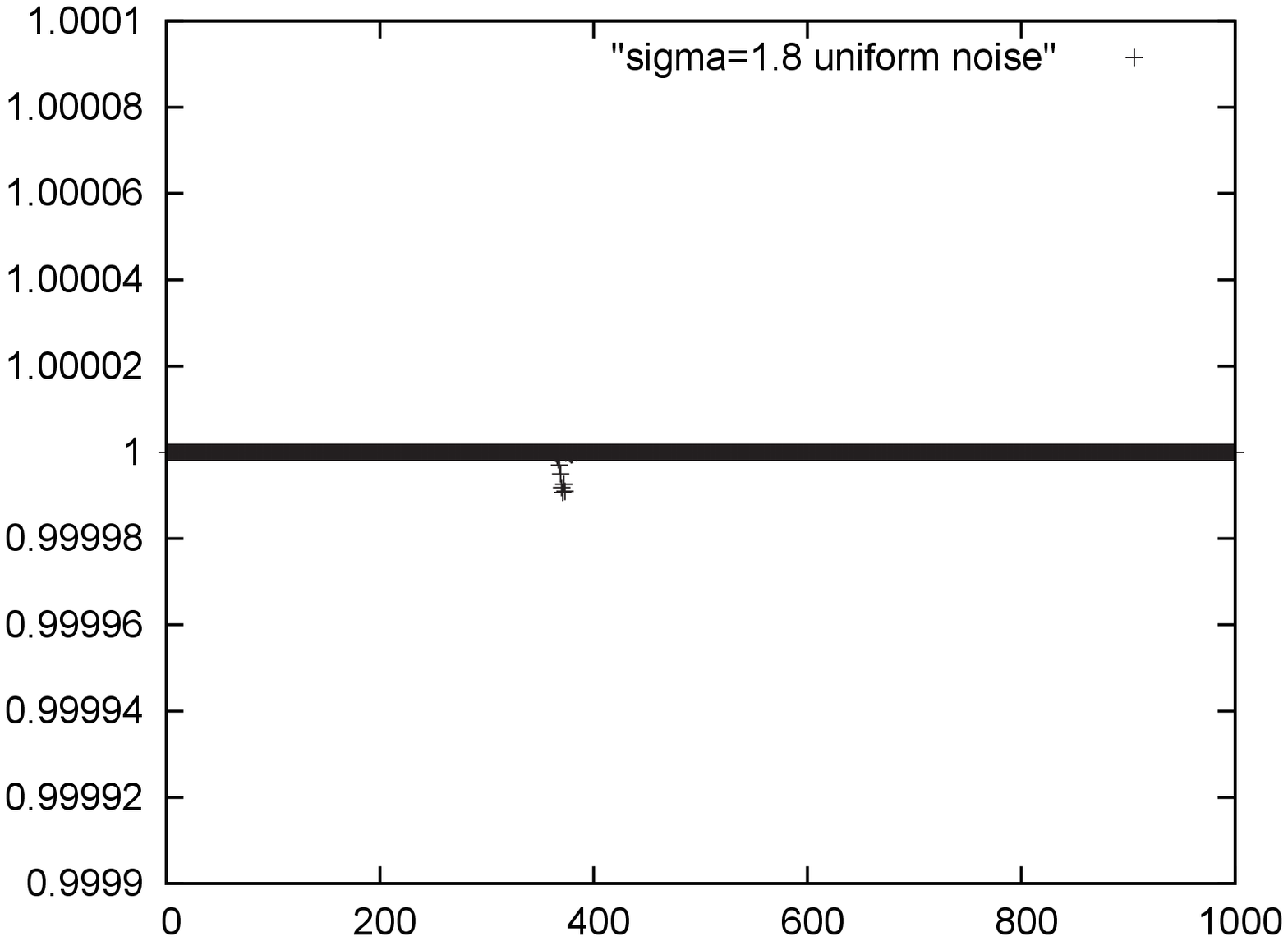}
\includegraphics[height=0.3\textheight]{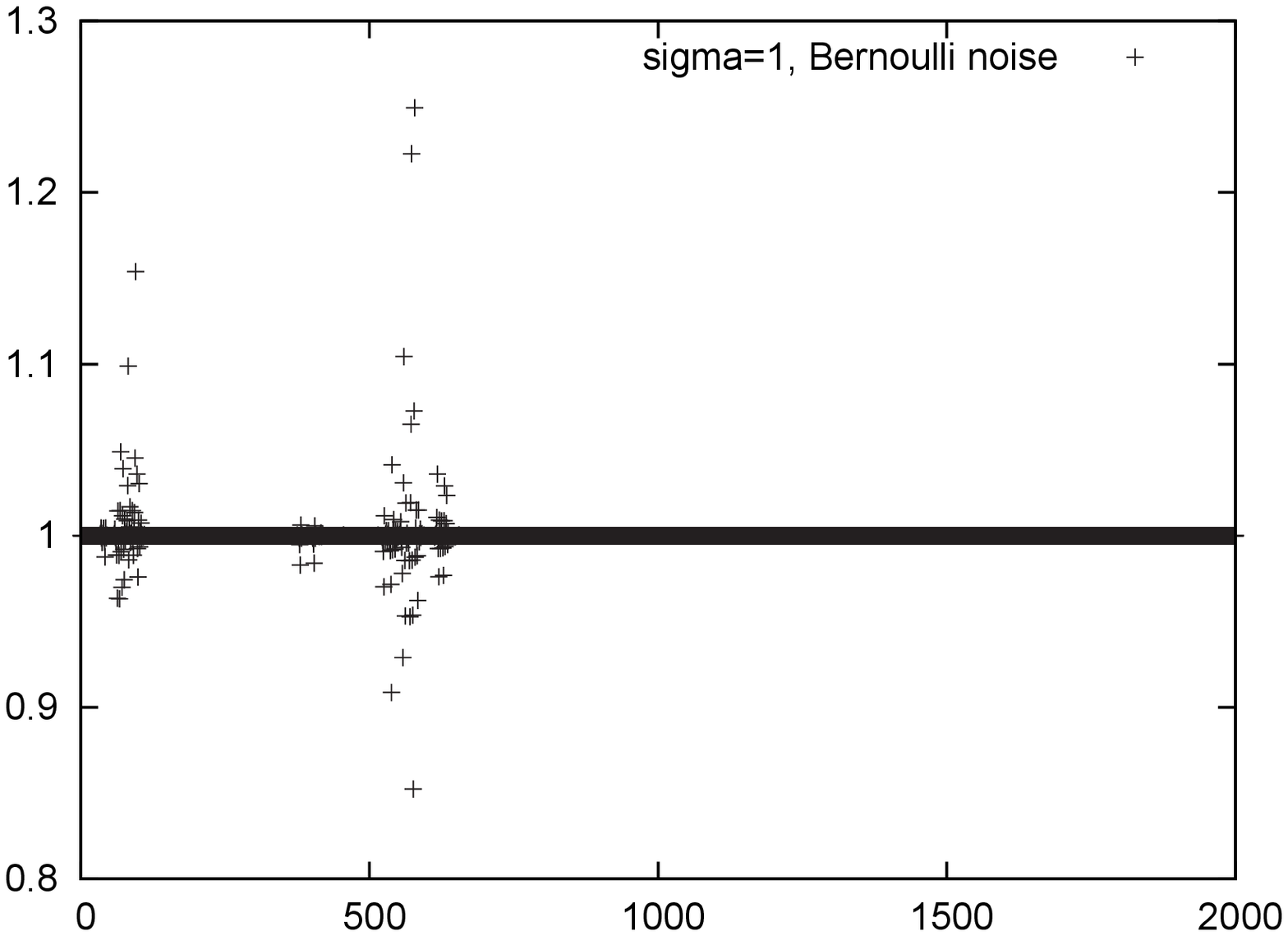}
\end{center}
\caption{Five runs of (\protect{\ref{ex9eq1}}) for the uniform distribution, $x(0)=1+10^{-11}$ (left) noise (\protect{\ref{def:bernxi}}) with
$x(0)=1+10^{-8}$ and $\sigma=1.8$.}
\label{figure8a}
\end{figure}

\end{example}



\section{Discussion}
\label{sec:discussion}

We have proved that, when the equilibrium is unstable which includes the scenarios of
stable cycles with high amplitude or even chaos, 
we still can stabilize equations, for example,  by using the Bernoulli type of noises \eqref{def:bernxi},
if the initial value is in a neighbourhood $I_{\delta, K}$ of the positive equilibrium $K$. In simulations, we also considered a continuous uniformly distributed noise.
However, the stronger the instability, the  bigger the noise coefficient we need to take, 
which implies that the interval $I_{\delta, K}$ of the initial values becomes very small.  
To avoid the necessity to bring a solution to $I_{\delta, K}$, we  apply a combined method and start a solution from a bigger interval 
$I_{\beta, K}$ pushing it back into $I_{\beta, K}$ each time when it gets out to $I_{u, K}$. We have justified that with the initially fixed 
probability, the combined method will deliver the solution into $I_{\delta, K}$ in a finite number of steps. After 
that  moment we apply multiplicative noise control which eventually brings a solution to $K$.

The results of the paper can be summarized as follows.

\begin{enumerate}
\item
We developed a two-step method, where at the final stage stabilization is achieved only by applying a multiplicative noise. 
However, numerical simulations illustrate that, for a wide range of problems, where there is a stable two-cycle in a non-controlled
equation, stabilization by noise is possible, without any additional preliminary methods.
Assumption~\ref{as:lambda} is absolutely crucial for the possibility to stabilize by noise, and the stronger is instability (measured, for example, by the distance of the map parameter from the bifurcation point where stability is lost), a larger noise amplitude should be chosen to guarantee stabilization. 
\item
In some works, e.g. \cite{Medv}, stabilization is considered in the case when stability switches between the two equilibrium 
points, and the value of the bifurcation parameter is quite close to the bifurcation point, in particular, $q=0.05$. 
First, we consider stabilization of an arbitrary point, allowing a period-doubling bifurcation.
Second, stabilization was achieved quite far from the bifurcation point where the considered equilibrium point lost stability: $q=0.1,0.3$ and even $q=1$. In Examples~\ref{ex_chaos_1} and \ref{ex_chaos_2}, the non-controlled equations were chaotic. In order to stabilize, we successfully implemented the combined method.
\end{enumerate}


Note that Assumption~\ref{as:fu} assumes existence of exactly one positive equilibrium (which is satisfied for Ricker and logistic maps). However, the results of the present paper can be applied to the case of several positive equilibrium points, once, using a deterministic control, we bring a solution in a neighbourhood of a chosen equilibrium $K_1$ and set up a stabilizing stochastic perturbation dependent on $x_n-K_1$. Then, for any $x_0 >0$, a solution converges to $K_1$.


Two other most substantial extensions of the present work are listed below.
\begin{itemize}
\item
In the present paper, we stabilized scalar first order difference equations. It would be interesting
to consider stabilization of higher order equations and systems by noise. It is natural to expect that 
a solution should be in a neighbourhood of the equilibrium, in order to achieve stabilization.
Thus, some deterministic or stochastic method bringing a solution into a target domain, should precede
stabilization by noise.
\item
So far we only stabilized an equilibrium. An interesting question is whether it is possible to stabilize an unstable 
cycle. It would probably be even more interesting to prove that, under certain conditions,  a stochastic perturbation reduces a cycle amplitude, and construct relevant estimations.
\end{itemize}

\section{Acknowledgment}
The authors thank the referees for valuable comments which greatly contributed to the presentation of the paper.
Both authors are grateful to  the American Institute of Mathematics SQuaRE program during which this research was initiated.  The first author was supported by the NSERC grant RGPIN-2015-05976.



\section{Appendix A}

\label{sec:expest}

In this section we derive exponential estimates of MNC solutions and prove Theorem \ref{thm:main1}.

We consider the case when the solution has already reached 
the target interval $I_{\delta,K}=(K-\delta,K+\delta)$ which happened at some random moment $\tau$. 
For simplicity of calculations, in this section we assume that the equilibrium $K$ equals zero, 
so instead  of $z_n$ we denote the solution by $x_n$.
 
We assume everywhere below that  $\mathbb P\{\tau<\infty\}=1$, as we prove it for the Directed Walk Control in Appendix B.
Once this happens, we apply MNC (Multiplicative Noise Control) which is based on
the application of Kolmogorov's  Law of Large  Numbers (see Sections \ref{subsec:Kolm} and \ref{subsec:rmffxi}).  
So we consider  equation \eqref{eq:stoch21} with the initial value $x_\tau\in I_\delta$, 
where $\tau$  is the first  moment when $x$  enters  $I_\delta$. The fact that, in general, $\tau$ is a  random variable, brings some extra technical difficulties.  

In Section~\ref{subsec:3gr} we  introduce two classes of  sets, which will be helpful in the proofs. 
The first class consists of sets,  for each of which the value of $\tau$ is constant. 
Construction of the  second class is based on 
Kolmogorov's  Law of Large Numbers  and 
on the number   for which a desired estimate on the sums of
$\ln|\Theta_i|$, where $\Theta$ are defined in \eqref{def:Theta},  holds with a given  probability.
Then we evaluate probabilities of certain intersections and unions of those sets.

In Section~\ref{subsec:defrel} we define several  parameters, 
in particular $\delta$, the radius of the initial interval which guarantees local asymptotic stability. 
In Section~\ref{subsec:Expxn1}, after obtaining a  general estimate for solutions of the equation with MNC, we derive  an exponential estimate for the solution with a certain 
value at a step $\tau$ treated as an initial value. In  Sections~\ref{subsec:tau0}  
we discuss the case   $\mathbb P\{\tau=0\}=1$, which means that, with probability 1, the solution starts from the interval $I_\delta$. And, finally, in  Sections~\ref{subsec:Prmain1}  we give the  proof of Theorem \ref{thm:main1}.

\subsection{Random moment $\tau$ and classes of  probability sets}
\label{subsec:3gr}

%

%

 \bigskip

\subsubsection{Definition of $\Omega_{\tau k}$} 

For a random variable $\tau$ satisfying  Assumption \ref{as:tau1} and for  each  $k\in \mathbb N_0$, we set 
\begin{equation}
\label{def:omegaktau}
\Omega_{\tau k}:=\left\{\omega\in \Omega: \tau=k  \right\}, \quad \text{so} \quad \Omega=\bigcup_{i=0}^{\infty}  \Omega_{\tau i}.
\end{equation}
Since $\Omega_{\tau k}$ are mutually exclusive, we also have $\sum_{i=0}^{\infty}  \mathbb P\Omega_{\tau i}=1.$


\subsubsection{Definition of $\Omega_{\gamma k}$} 
\label{sec:def:Ok} 

Fix $\gamma \in (0, 1)$ and, based on \eqref {ineq:updown}, find a nonrandom number $\bar N=\bar N(\gamma, \varepsilon)$ such that\begin{equation}
\label{ineq:prob1}
\mathbb P \left\{-(\lambda+\varepsilon)n<\sum_{i=1}^{n}  v_i\le -(\lambda-\varepsilon)n, \quad \text{for all} \quad  n\ge \bar N \right\} > 1-\frac \gamma 2.
\end{equation}
For each $k\in \mathbb N_0$, we set
\begin{equation}
\label{def:Ok}
\Omega_{\gamma k}:=\left\{\omega \in \Omega: -(\lambda+\varepsilon)n<\sum_{i=k+1}^{k+n}  v_i\le -(\lambda-\varepsilon)n, \quad \text{for all} \quad  n\ge \bar N  \right\}.
\end{equation}
In general,  $\Omega_ {\gamma k}\neq \Omega_{\gamma j} $ for $k\neq j$, but since $v_n$ are identically distributed,  
we have $\mathbb P \left\{\Omega_{\gamma k}\right\}=\mathbb P \left\{\Omega_{\gamma j}\right\}$, $\forall k, j\in \mathbb N_0$, 
where $\mathbb P \left\{\Omega_{\gamma 0}\right\}$ is estimated in \eqref{ineq:prob1}.
Then, for all $k\in \mathbb N_0$,  
\begin{equation}
\label{ineq:probk}
\mathbb P \left\{\Omega_{\gamma k}\right\}> 1-\frac \gamma 2.
\end{equation}
By \eqref{def:Ok}, for all $n\ge \bar N$  and all $k\in \mathbb N_0$ we have
on  $\Omega_{\gamma k}$,
\begin{equation}
\label{ineq:probk1}
e^{-(\lambda+\varepsilon)n}\le  e^{\sum_{i=k+1}^{k+n}  v_i} \le  e^{-(\lambda-\varepsilon)n}.
 \end{equation}

\medskip

\subsubsection{Estimation of some probabilities} 

Let $\{\Omega_{\tau k}\}_{k\in \mathbb N}$, $\{\Omega_{\gamma k}\}_{k\in \mathbb N}$ be defined by 
\eqref{def:omegaktau} 
and   \eqref{def:Ok}, respectively.  Due to the fact that $\tau$ is independent of $v_i$,  we have
for nonzero left-hand side,
\[
\mathbb P \left\{ \Omega_{\tau k}\cap \Omega_{\gamma k}\right\}=\mathbb P \left\{ \Omega_{\tau k}\right\}
\mathbb P \left\{  \Omega_{\gamma k}\right\} >\left(1-\gamma\right)\mathbb P \left\{ \Omega_{\tau k}\right\}.
\]
Note that,  for $k\neq i$, the  sets $ \Omega_{\tau k}\cap \Omega_{\gamma k}$ and $ \Omega_{\tau i}\cap \Omega_{\gamma i}$ are 
mutually exclusive since $ \Omega_{\tau k} \cap \Omega_{\tau i}= \emptyset$, $i \neq j$. 
Denote, for $s\in \mathbb N$,
\begin{equation}
\label{def:taugamman3}
\Omega_{\tau, \gamma}:=\bigcup_{k=0}^{\infty}\left[\Omega_{\tau k}\cap \Omega_{\gamma k}\right], \quad \Omega_{\tau, \gamma, s}:=\bigcup_{k=0}^{s}\left[\Omega_{\tau k}\cap \Omega_{\gamma k}\right].
\end{equation}
Then, applying \eqref{def:omegaktau}, we get
\begin{equation}
\begin{split}
\label{est:taugamma}
\mathbb P \left\{\Omega_{\tau, \gamma}\right\}&
=\sum_{i=0}^{\infty}\mathbb P \left\{\Omega_{\tau i}\cap \Omega_{\gamma i}\right\}=\sum_{i=0}^{\infty}\mathbb P \left\{\Omega_{\tau i}\right\} \mathbb P \left\{\Omega_{\gamma i}\right\}\\&> (1-\gamma)\sum_{i=0}^{\infty}\mathbb P \left\{ \Omega_{\tau i}\right\}=1-\gamma.
\end{split}
\end{equation}
The proof of the following lemma is standard and thus is omitted.
\begin{lemma}
\label{lem:estg}
Let $\gamma\in (0, 1)$ and $\Omega_{\tau, \gamma, s}$ be defined as in \eqref {def:taugamman3}. 
Then there exists $n_0\in \mathbb N_0$ such that
\begin{equation}
\label{est:n3}
\mathbb P \left\{\Omega_{\tau, \gamma, n_0}\right\}\ge 1-\gamma.
\end{equation}
\end{lemma}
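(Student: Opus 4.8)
The goal is to prove Lemma~\ref{lem:estg}: given $\gamma\in(0,1)$ and the family $\Omega_{\tau,\gamma,s}=\bigcup_{k=0}^{s}[\Omega_{\tau k}\cap\Omega_{\gamma k}]$, there is a nonrandom $n_0$ with $\mathbb{P}\{\Omega_{\tau,\gamma,n_0}\}\ge 1-\gamma$. The plan is to exploit that $\Omega_{\tau,\gamma,s}$ is an increasing sequence of sets in $s$ whose union $\Omega_{\tau,\gamma}$ has probability strictly bigger than $1-\gamma$ by \eqref{est:taugamma}, and then invoke continuity of the measure from below.

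First I would observe that for $s\le s'$ we have $\Omega_{\tau,\gamma,s}\subseteq\Omega_{\tau,\gamma,s'}$, directly from the definition in \eqref{def:taugamman3} as a union over $k=0,\dots,s$; hence $(\Omega_{\tau,\gamma,s})_{s\in\mathbb{N}_0}$ is a nondecreasing sequence of measurable sets with $\bigcup_{s=0}^{\infty}\Omega_{\tau,\gamma,s}=\Omega_{\tau,\gamma}$. By continuity of $\mathbb{P}$ from below (a standard property of probability measures, see \cite{Shiryaev96}), $\mathbb{P}\{\Omega_{\tau,\gamma,s}\}\uparrow\mathbb{P}\{\Omega_{\tau,\gamma}\}$ as $s\to\infty$. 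By \eqref{est:taugamma} we have $\mathbb{P}\{\Omega_{\tau,\gamma}\}>1-\gamma$, so the limit of $\mathbb{P}\{\Omega_{\tau,\gamma,s}\}$ strictly exceeds $1-\gamma$; by definition of a limit there exists $n_0\in\mathbb{N}_0$ such that $\mathbb{P}\{\Omega_{\tau,\gamma,n_0}\}\ge 1-\gamma$ (in fact $>1-\gamma$ for all sufficiently large $n_0$). This $n_0$ is nonrandom since it depends only on $\gamma$ (and on the fixed $\varepsilon$ and $\bar N$ entering the construction), not on $\omega$.

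There is no real obstacle here: the only thing to be careful about is the monotonicity of the family and the strictness of the inequality in \eqref{est:taugamma}, which together guarantee that the threshold $1-\gamma$ is actually attained at a finite index rather than only in the limit. For completeness one may also note that each $\Omega_{\tau,\gamma,s}$ is measurable, being a finite union of intersections of the events $\{\tau=k\}\in\mathcal{F}$ and $\Omega_{\gamma k}\in\mathcal{F}$ (the latter measurable as it is defined through countably many conditions on the $v_i$). This completes the proof.
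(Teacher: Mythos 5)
Your proof is correct, and it is precisely the standard argument the authors indicate they are omitting: $\Omega_{\tau,\gamma,s}$ increases to $\Omega_{\tau,\gamma}$, continuity of $\mathbb{P}$ from below gives $\mathbb{P}\{\Omega_{\tau,\gamma,s}\}\uparrow\mathbb{P}\{\Omega_{\tau,\gamma\}}>1-\gamma$, and the strict inequality in \eqref{est:taugamma} ensures the threshold is reached at some finite nonrandom $n_0$. One could equivalently phrase it via countable additivity over the disjoint pieces $\Omega_{\tau k}\cap\Omega_{\gamma k}$ (which the paper already notes are mutually exclusive), observing that the partial sums of the convergent series $\sum_k\mathbb{P}\{\Omega_{\tau k}\cap\Omega_{\gamma k}\}=\mathbb{P}\{\Omega_{\tau,\gamma}\}>1-\gamma$ eventually exceed $1-\gamma$; this is the same argument in a slightly different dress, so there is nothing further to add.
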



\subsubsection{Definition and estimation of $\mathcal B_{n, k}$ and $\mathcal A_k$ for  $n\le \bar N$} 

For each $n\in \mathbb N$,  $k\in \mathbb N_0$,  based on the linear part of equation \eqref{eq:stoch21},  we construct  non-negative random variables $\mathcal B_{n, k}$ and $\mathcal A_k$  by
\begin{eqnarray}
\label{def:Jn}
&&\mathcal B_{n, k}:=\prod_{i=k+1}^{n+k}|\Theta_i|=e^{\sum_{i=k+1}^{n+k}v_i},\\\label{def:calA1}
&&\mathcal A_k:= \max_{i, n: \, 1\le i\le n\le \bar N}\prod_{j=i+k}^{n+k}|\Theta_j|=\max_{i, n: \, 1\le i\le n\le \bar N} e^{\sum_{i=k+i}^{n+k}v_i},
\end{eqnarray}
where $\bar N>1$  is such that \eqref{ineq:prob1}  and \eqref{ineq:probk} hold.  By Assumption \ref{as:chixi}  and 
\eqref{rel:boundThetav},   random variables $\Theta_n$ and $v_n$ are bounded by $\bar \Theta$ and $\bar v$, respectively. If we set
\begin{equation}
\label{def:barTvM}
M:=e^{(\bar N-1)\bar v}=\bar \Theta^{\bar N-1},
\end{equation}
then $M>1$, and for all $\omega \in \Omega$, $n\in \mathbb N$,  $k\in \mathbb N_0$, 
\begin{equation}
\label{est:Ank}
\mathcal B_{n, k}=e^{\sum_{j=k+1}^{n+k} v_j}\le e^{(n-1)\bar v}, \quad  \mathcal A_k
\le \max_{i, n: \, 1\le i\le n\le \bar N} e^{(n-i)\bar v}  \leq e^{(\bar N-1)\bar v}=M.
\end{equation}

Note  that for  each $k\in \mathbb N_0$  by \eqref{def:Ok} and \eqref{ineq:probk1}, we have for $n\ge \bar N>1$ on $\Omega_{\gamma k}$,
\begin{equation}
\label{est:Bnk}
\mathcal B_{n, k}=e^{\sum_{i=k+1}^{n+k}v_i}\le e^{-(\lambda-\varepsilon)n}<1<M.
\end{equation}

\medskip

\subsection{More definitions and relations}
\label{subsec:defrel}

Let $\lambda$ be defined as in \eqref{as:Thetalambda} and let $C, \kappa$ be from Assumption \ref{as:fu}. Choose some $\varsigma>0$ and $\varepsilon>0$ such that
\begin{equation}
\label{def:varsigmaepsilon}
\varsigma<\frac \lambda 2, \quad \varepsilon < \min\left\{\frac{\kappa \varsigma}2, \, \lambda - \varsigma\right\}.
\end{equation}
Recall that \eqref{ineq:prob1} and \eqref{ineq:probk} hold for $\bar N$. 
Then \eqref{ineq:probk1} implies that for  $n\ge \bar N$, $\omega \in \Omega_{\gamma k}$ and each $k\in \mathbb N_0$, we have
for $\mathcal B_{n, k}$ defined as in \eqref{def:Jn},
\begin{equation*}
e^{-(\lambda +\varepsilon)n} \le\mathcal B_{n, k}\le e^{-(\lambda -\varepsilon)n}, \quad \text{so} \quad \left[\mathcal B_{n, k}\right] ^{-1}\le e^{(\lambda +\varepsilon)n}.
\end{equation*}
By \eqref{def:varsigmaepsilon}, we get $-\lambda+\varsigma +\varepsilon<0$, so
\begin{equation}
\label{est:In1}
e^{(-\lambda+\varsigma +\varepsilon)n} <1.
\end{equation}
For  $n\ge \bar N$  we have on $\Omega_k$, for  each $k\in \mathbb N_0$,
\begin{equation}
\label{est:Ink}
\begin{split}
&\mathcal B_{n, k} \le e^{(-\lambda+\varsigma +\varepsilon)n} e^{-\varsigma n}, \\  &\left[\mathcal B_{n, k}\right] ^{-1} \le e^{(\lambda +\varepsilon)n}=e^{(\lambda -\varepsilon -\varsigma)n}e^{(2\varepsilon+\varsigma)n}=e^{-(-\lambda+\varsigma +\varepsilon)n} e^{(2\varepsilon+\varsigma)n}.
\end{split}
\end{equation}
Since $\kappa \varsigma - 2 \varepsilon>0$ and $-\lambda+\varsigma +\varepsilon <0$,
we can apply  Lemma \ref{lem:aux} with $b_j:=e^{-(\kappa \varsigma - 2 \varepsilon)j}$, 
$e_j:=~e^{-(-\lambda+\varsigma +\varepsilon)(j+1)}$, and  choose a nonrandom $\bar N_1\in \mathbb N$ such that, for 
$n\ge \bar N_1$,
\begin{equation}
\label{def:N2}
e^{(-\lambda+\varsigma +\varepsilon)(n+1)}\sum_{j=1}^n e^{-(\kappa \varsigma - 2 \varepsilon)j}
e^{-(-\lambda+\varsigma +\varepsilon)(j+1)}<\frac 12 C^{-1}e^{-(2\varepsilon+\varsigma)},
\end{equation}
where $C$ is from \eqref{eq:phi} in Assumption~\ref{as:fu}.  Let 
\begin{equation}
\label{def:barN2}
\bar N_2=\max\{\bar N_1, \bar N\},
\end{equation}
$u$ and $\kappa$ be also from Assumption \ref{as:fu}, and $M$ be defined as in \eqref{def:barTvM}.  Choose
\begin{equation}
\label{def:eta}
\eta\le \min\left\{\left( 2CMe^{\varsigma (\bar N_2+1)}\sum_{j=0}^{\bar N_2}e^{-(1+\kappa)\varsigma j}  \right)^{-1/\kappa}, \,\, 1, \,\, u\right\}
\end{equation}
and  
\begin{equation}
\label{def:delta}
\delta \le  \min \left\{ ( M+C)^{-1}e^{-\varsigma } \eta, \,\,\frac 12\eta M^{-1} e^{-\varsigma \bar N_2}\right\},
\end{equation}
which obviously satisfies $\delta<1$ and $\delta<\frac \eta 2$.


\subsection{Exponential estimates of solutions.}
\label{subsec:Expxn1}

In this section we derive exponential estimates of solutions to equations with different initial values. 

\subsubsection{General estimation of solutions}
\label{subsec:genest}

Let $\tau$ satisfy Assumption \ref{as:tau1}, i.e.  $\tau$ be  a positive   integer-valued a.s. finite random variable which is independent of all $\xi_i$, and  let $\nu$ be some  a.s. finite positive  random variable.
Fix some  $k\in \mathbb N_0$ and consider the following modification of equation  \eqref{eq:stoch21} 
\begin{equation}
\label{eq:stochmodk}
y_{n+1}=-(1+q-\sigma \xi_{k+n+1}) y_n+\phi(y_n), \quad y_0=\nu.
\end{equation}

Everywhere further we assume that $\displaystyle \prod_i^j=1$ if $i>j$.
The following lemma is obtained by induction.

\begin{lemma}
\label{lem:genest}
Let $k\in \mathbb N_0$, condition \eqref{eq:phi} hold, $\Theta_i$ be defined as in \eqref{def:Theta}, and $\Omega^*\subseteq \Omega$.   
Assume that $y_n$ is a solution to \eqref{eq:stochmodk} with a random initial value $\nu$ 
such that $\nu\in I_u$ on $\Omega^*$, and, for some $n\in \mathbb N_0$, on $\Omega^*$,
$ y_i\in I_u$, $i=1, 2, \dots, n$.
Then, on $\Omega^*$,
 \begin{equation}
\label{est:gen}
|y_{n+1}|\le e^{\sum_{i=k+1}^{k+n+1}\ln |\Theta_i|}\left(|\nu|+C\sum_{j=0}^n|y_j|^{1+\kappa}e^{-\sum_{i=k+1}^{k+j+1}\ln |\Theta_i|}   \right).
\end{equation}
\end{lemma}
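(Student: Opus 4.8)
The plan is to prove \eqref{est:gen} by induction on $n$, using the fact that on $\Omega^*$ all iterates $y_0=\nu, y_1,\dots,y_n$ lie in $I_u$, so that the representation \eqref{def:simplf}--\eqref{eq:phi} of $f$ (equivalently, the form \eqref{eq:stochmodk} with $|\phi(y)|\le C|y|^{1+\kappa}$) is valid at each step. Throughout I write $P_j:=\prod_{i=k+1}^{k+j}|\Theta_i|=e^{\sum_{i=k+1}^{k+j}\ln|\Theta_i|}$ for $j\ge 1$ and $P_0:=1$ (consistent with the convention $\prod_i^j=1$ for $i>j$ fixed just before the lemma), so the claimed bound reads $|y_{n+1}|\le P_{n+1}\bigl(|\nu|+C\sum_{j=0}^n |y_j|^{1+\kappa}P_{j+1}^{-1}\bigr)$.

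First I would solve the linear recursion with forcing explicitly. From \eqref{eq:stochmodk}, $y_{n+1}=-\Theta_{k+n+1}y_n+\phi(y_n)$; unwinding this gives the variation-of-constants formula
\begin{equation*}
y_{n+1}=(-1)^{n+1}\Bigl(\prod_{i=k+1}^{k+n+1}\Theta_i\Bigr)\nu+\sum_{j=0}^{n}(-1)^{\,n-j}\Bigl(\prod_{i=k+j+2}^{k+n+1}\Theta_i\Bigr)\phi(y_j).
\end{equation*}
This identity is itself established by a one-line induction on $n$ (the base case $n=0$ is $y_1=-\Theta_{k+1}\nu+\phi(y_0)$, and the inductive step just multiplies through by $-\Theta_{k+n+2}$ and adds $\phi(y_{n+1})$). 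Taking absolute values, the sign factors disappear, $|\prod_{i=k+1}^{k+n+1}\Theta_i|=P_{n+1}$, and $|\prod_{i=k+j+2}^{k+n+1}\Theta_i|=P_{n+1}/P_{j+1}$, so
\begin{equation*}
|y_{n+1}|\le P_{n+1}|\nu|+\sum_{j=0}^{n}\frac{P_{n+1}}{P_{j+1}}\,|\phi(y_j)|\le P_{n+1}|\nu|+P_{n+1}\sum_{j=0}^{n}\frac{C|y_j|^{1+\kappa}}{P_{j+1}},
\end{equation*}
where the last inequality uses $|\phi(y_j)|\le C|y_j|^{1+\kappa}$, which is legitimate precisely because $y_j\in I_u$ on $\Omega^*$ for $j=0,\dots,n$ (and $\nu=y_0\in I_u$). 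Factoring out $P_{n+1}=e^{\sum_{i=k+1}^{k+n+1}\ln|\Theta_i|}$ and $P_{j+1}^{-1}=e^{-\sum_{i=k+1}^{k+j+1}\ln|\Theta_i|}$ yields exactly \eqref{est:gen}.

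I do not expect any serious obstacle here; the only points requiring care are bookkeeping. One must make sure the index ranges in the telescoping products are correct (in particular that the $j=n$ term of the sum carries the empty product $\prod_{i=k+n+2}^{k+n+1}\Theta_i=1$, handled by the stated convention), and one must invoke the hypothesis ``$y_i\in I_u$ for $i=1,\dots,n$'' together with ``$\nu\in I_u$ on $\Omega^*$'' at the single place where $|\phi(y_j)|\le C|y_j|^{1+\kappa}$ is applied — this is the reason the lemma is stated only on the event $\Omega^*$ and only under those containment assumptions. Since $\Theta_i>0$ when $\sigma<1+q$, one could even drop the absolute values on $\Theta_i$, but keeping $|\Theta_i|$ makes the statement robust and matches the notation $v_i=\ln|\Theta_i|$ used later; no extra argument is needed either way.
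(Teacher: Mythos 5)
Your proof is correct and matches the paper's approach: the paper states only that Lemma \ref{lem:genest} ``is obtained by induction,'' and your argument is exactly that induction, packaged as a variation-of-constants identity (itself proved in one inductive line) followed by the triangle inequality, the bound $|\phi(y_j)|\le C|y_j|^{1+\kappa}$ justified by $y_j\in I_u$ on $\Omega^*$, and the telescoping of the $\Theta$-products. The index bookkeeping (including the empty-product convention at $j=n$) is handled correctly.
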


\subsubsection{Exponential estimates of solutions of auxiliary equations}
\label{subsub:aux}

Next, we proceed to exponential estimations for solutions of \eqref{eq:stochmodk}.

\begin{lemma}
\label{lem:expxn}
Let   $\gamma \in (0, 1)$ and $k\in \mathbb N_0$.  Let Assumptions \ref{as:chixi}, \ref{as:fu}, \ref{as:lambda}  and \ref{as:tau1} hold.
Let  the sets $\Omega_{\tau k}$,  $\Omega_{\gamma k}$
and  numbers   $\varsigma$, $\eta$, $\delta$,  be defined by  \eqref{def:omegaktau},  \eqref{def:Ok}, 
\eqref{def:varsigmaepsilon}, \eqref {def:eta} and   \eqref{def:delta}, respectively. Let $y_n$ be a solution  to 
\eqref{eq:stochmodk} with the initial value $y_0=\nu$, where  $\nu$ is a random variable such that  $\nu \in 
I_\delta$   on $\Omega_{\tau k}$. Then, on $\Omega_{\gamma k}\cap \Omega_{\tau k}$, \begin{equation}
\label{est:mainzn}
|y_n|\le \eta e^{-\varsigma n}, \quad  \mbox{for all} \,\,   n\in \mathbb N.
\end{equation}
\end{lemma}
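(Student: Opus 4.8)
The plan is to prove the estimate \eqref{est:mainzn} by induction on $n$, splitting the argument according to whether $n\le \bar N_2$ or $n>\bar N_2$, where $\bar N_2$ is the threshold from \eqref{def:barN2}. First I would work entirely on the fixed set $\Omega^* := \Omega_{\gamma k}\cap \Omega_{\tau k}$, on which $\nu\in I_\delta$ (with $\delta<\frac{\eta}{2}\le \frac{u}{2}$) and on which the sum estimates \eqref{ineq:probk1}, equivalently the bounds \eqref{est:Ink}, hold for all $n\ge \bar N$. The first task is to show that the solution $y_n$ stays in $I_u$ for every $n$, so that Lemma~\ref{lem:genest} is applicable; this is done simultaneously with the main estimate, since $|y_n|\le \eta e^{-\varsigma n}\le \eta \le u$ by \eqref{def:eta} once we have established \eqref{est:mainzn} up to step $n$.

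For the base range $n\le \bar N_2$: here the sum estimate \eqref{est:Bnk} is not available, so I would use only the crude uniform bounds $\mathcal B_{n,k}\le e^{(n-1)\bar v}\le M$ from \eqref{est:Ank}. Feeding these into \eqref{est:gen} with $|\nu|\le \delta$ and the inductive hypothesis $|y_j|\le \eta e^{-\varsigma j}$ for $j<n$, one gets
\[
|y_n|\le M\left(\delta + C\sum_{j=0}^{n-1}\eta^{1+\kappa}e^{-(1+\kappa)\varsigma j}\, M\right).
\]
Now I would use the two defining inequalities for $\delta$ and $\eta$, namely $\delta\le \frac12 \eta M^{-1}e^{-\varsigma\bar N_2}$ from \eqref{def:delta} and the bound \eqref{def:eta} on $\eta^{\kappa}$, to conclude that each of the two summands is at most $\frac12\,\eta e^{-\varsigma \bar N_2}\le \frac12 \eta e^{-\varsigma n}$, so that $|y_n|\le \eta e^{-\varsigma n}$. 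This closes the induction on the base range, and in particular gives $|y_{\bar N_2}|\le \frac12 \eta e^{-\varsigma \bar N_2}$ — a factor $\tfrac12$ to spare — which will be the ``initial condition'' for the second range.

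For $n>\bar N_2$ (so in particular $n\ge \bar N$, where \eqref{est:Ink} is valid) I would again apply \eqref{est:gen}, but now split the sum $\sum_{j=0}^{n}$ at $j=\bar N_2$. For the tail $j>\bar N_2$ use the sharp bounds $\mathcal B_{n,k}\le e^{(-\lambda+\varsigma+\varepsilon)n}e^{-\varsigma n}$ and $[\mathcal B_{j+1,k}]^{-1}\le e^{-(-\lambda+\varsigma+\varepsilon)(j+1)}e^{(2\varepsilon+\varsigma)}\cdot e^{(2\varepsilon+\varsigma)j}$ from \eqref{est:Ink}, together with $|y_j|^{1+\kappa}\le \eta^{1+\kappa}e^{-(1+\kappa)\varsigma j}$; the exponents in $j$ then combine to $e^{-(\kappa\varsigma-2\varepsilon)j}$, which is summable because $\kappa\varsigma-2\varepsilon>0$ by \eqref{def:varsigmaepsilon}, and the choice of $\bar N_1$ in \eqref{def:N2} is exactly designed so that this tail contributes at most $\frac12 \eta e^{-\varsigma n}$. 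For the head $j\le \bar N_2$, the factors $[\mathcal B_{j+1,k}]^{-1}$ are bounded by $M$ (by \eqref{est:Ank}) while the leading factor contributes $e^{-\varsigma n}$ up to the harmless constant $e^{(-\lambda+\varsigma+\varepsilon)n}<1$ from \eqref{est:In1}; here the refined base-case bound $|y_j|\le \eta e^{-\varsigma j}$ and the $\frac12$-slack in the bound \eqref{def:eta} on $\eta$ give a contribution at most $\frac12 \eta e^{-\varsigma n}$. Adding the two halves (including the $|\nu|$-term into the head estimate) yields $|y_n|\le \eta e^{-\varsigma n}$, completing the induction.

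The main obstacle, and the place where all the bookkeeping in Section~\ref{subsec:defrel} pays off, is matching the exponents in the tail sum: one must verify that the product of the three $j$-dependent exponential factors coming from $\mathcal B_{n,k}$, $[\mathcal B_{j+1,k}]^{-1}$ and $|y_j|^{1+\kappa}$ collapses exactly to a summable geometric series $e^{-(\kappa\varsigma-2\varepsilon)j}$ times the desired $e^{-\varsigma n}$, with leftover constants absorbed by the definitions \eqref{def:N2}, \eqref{def:eta}, \eqref{def:delta}. Everything else — staying in $I_u$, the split at $\bar N_2$, applying Lemma~\ref{lem:genest} — is routine once these constants are lined up; so the proof is essentially a careful verification that the four displayed choices of $\varsigma,\varepsilon,\bar N_2,\eta,\delta$ are consistent, which I would present as the substance of the argument.
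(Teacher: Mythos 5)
Your induction structure and the $\bar N_2$-threshold split mirror the paper, but for $n>\bar N_2$ you take a genuinely different route: you split the sum $\sum_{j=0}^n$ at $j=\bar N_2$ and treat the head and tail separately, whereas the paper's proof keeps the full sum and applies the Kolmogorov-type bound $[\mathcal B_{j+1,k}]^{-1}\le e^{(\lambda+\varepsilon)(j+1)}$ to \emph{every} $j$, leaving the growing $j$-dependent factor $e^{(\lambda-\varsigma-\varepsilon)(j+1)}$ to be cancelled against the prefactor $\mathcal B_{n+1,k}$ via Lemma~\ref{lem:aux} and the choice of $\bar N_1$ in \eqref{def:N2}. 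Your split is in a sense the more careful choice, since the sharp bound \eqref{est:Ink} is only guaranteed on $\Omega_{\gamma k}$ for indices $\ge\bar N$; but it introduces a gap in the head estimate that you do not close.

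Concretely: your claim that $[\mathcal B_{j+1,k}]^{-1}\le M$ for $j\le\bar N_2$ ``by \eqref{est:Ank}'' is not supported. That display bounds $\mathcal B_{n,k}$ and $\mathcal A_k$, both of which are products of $|\Theta_i|$'s, and says nothing about $[\mathcal B_{j+1,k}]^{-1}=\prod_{i=k+1}^{j+1+k}|\Theta_i|^{-1}$. The honest crude bound supplied by \eqref{rel:boundThetav} is $[\mathcal B_{j+1,k}]^{-1}\le e^{(j+1)\bar v}=\bar\Theta^{\,j+1}$, which for $j$ running up to $\bar N_2\ge\bar N$ exceeds $M=\bar\Theta^{\,\bar N-1}$. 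Moreover the $e^{\varsigma(\bar N_2+1)}$ factor in \eqref{def:eta}, which you invoke as the ``$\tfrac12$-slack,'' is there to offset the missing Kolmogorov decay $e^{-\varsigma(n+1)}$ in the range $n<\bar N_2$, not to absorb an extra $\bar\Theta^{\,\bar N_2-\bar N+2}$. So it is not established with the constants as defined in \eqref{def:eta}--\eqref{def:delta} that the head sum is $\le\tfrac12\eta e^{-\varsigma(n+1)}$. To repair this, you would either need to fall back to the paper's unsplit treatment (using \eqref{est:Ink} for the whole sum and relying on \eqref{def:N2}), or revisit the explicit choices of $M$, $\eta$, $\delta$ so that they accommodate the genuine worst-case factor $\bar\Theta^{\,\bar N_2+1}$ appearing in your head term.

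A smaller issue: in the tail you say the three $j$-dependent exponentials ``combine to $e^{-(\kappa\varsigma-2\varepsilon)j}$,'' but that is only the $b_j$-part of Lemma~\ref{lem:aux}; there is in addition the growing factor $e^{(\lambda-\varsigma-\varepsilon)(j+1)}$, and summability of the whole tail holds only after pairing it with the decaying prefactor $e^{(-\lambda+\varsigma+\varepsilon)(n+1)}$ via that lemma. Your later reference to \eqref{def:N2} shows you have the right mechanism in mind, but the sentence as written misstates the exponent bookkeeping.
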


\begin{proof}
Since the sequence $(\xi_n)_{n\in \mathbb N}$ satisfies  Assumptions \ref{as:chixi}, the same holds for  sequences $(\xi_{k+n})_{n\in \mathbb N}$ with any  $k\in \mathbb N_0$. 

We prove \eqref{est:mainzn} by induction.  Note that \eqref{rel:boundThetav} and \eqref{def:barTvM} imply that, for each $k\in \mathbb N_0$,
$
|\Theta_k|\le \bar \Theta \le M.
$
Since $|y_0|\le \delta<u<1$
on $\Omega_{\gamma k}\cap \Omega_{\tau k}$, we can apply \eqref{eq:phi}, recall \eqref{def:Theta} and get  for $n=1$, on $\Omega_{\gamma k}\cap \Omega_{\tau k}$,
\begin{eqnarray*}
|y_1| 
\le e^{\ln |\Theta_{k+1}|}|y_0|+C|y_0|^{1+\kappa}\le  |\nu|( M+C|\nu|^{\kappa})e^{\varsigma }  e^{-\varsigma 
} 
\le  \delta( M+C)e^{\varsigma }  e^{-\varsigma }\le \eta e^{-\varsigma },
\end{eqnarray*}
since $\delta\le ( M+C)^{-1}e^{-\varsigma } \eta$.

Now assume that,  for some $n\in \mathbb N$,  
\eqref{est:mainzn} holds on $\Omega_{\gamma k}\cap \Omega_{\tau k}$ for all $i\le n$,  
and prove that it holds on $ \Omega_{\gamma k}\cap \Omega_{\tau k}$ for $n+1$. 
Then  we can apply estimate  \eqref{est:gen}  from Lemma \ref{lem:genest} with $\Omega^*=\Omega_{\gamma k}\cap \Omega_{\tau k}$,
recall the definition of random variable  $\mathcal B_{n, k}$ in \eqref{def:Jn}
and conclude that, on $\Omega _{\gamma k}\cap \Omega_{\tau k} $, 
\begin{equation*}
\begin{split}
|y_{n+1}|&\le e^{\sum_{i=k+1}^{k+n+1}\ln |\Theta_i|}|\nu|+Ce^{\sum_{i=k+1}^{k+n+1}\ln |\Theta_i|}\sum_{j=0}^{n}|y_j|^{1+\kappa}e^{-\sum_{i=k+1}^{k+j+1}|\Theta_i|} \\
&=\mathcal B_{n+1, k} |\nu|+C\mathcal B_{n+1, k} \sum_{j=0}^{n}|y_j|^{1+\kappa}\left[\mathcal B_{j+1, k} \right]^{-1}:=T^{[k]}_1(n+1)+T^{[k]}_2(n+1).
\end{split}
\end{equation*}
Next, we prove that $T^{[k]}_i(n+1)\le \frac 12\eta e^{-\varsigma n}$ on $ \Omega_{\gamma k}\cap \Omega_{\tau k}$,  for each  $i=1, 2$.
For $\bar N_2$ defined as in \eqref{def:barN2} we  consider two cases:  $n < \bar N_2$ and  $n\ge \bar N_2$.
 
 
{\it Estimation of $T^{[k]}_1(n+1)$.} 
For $n< \bar N_2$, since 
$|y_0|\le \delta<\frac 12\eta M^{-1} e^{-\varsigma \bar N_2}$ and by \eqref{def:calA1}, \eqref{est:Ank},  we have  on $ \Omega_{\gamma k}\cap \Omega_{\tau k}$
\[
T^{[k]}_1(n+1)= \mathcal B_{n+1, k}|\nu|\le M \frac 12\eta M^{-1} e^{-\varsigma \bar N_2} <\frac 12 \eta e^{-\varsigma (n+1)}.
\]
For $n\ge  \bar N_2$, since $\delta<\frac 12\eta $  and by \eqref{est:Bnk}, \eqref{est:In1}, \eqref{est:Ink},  
\[
T^{[k]}_1(n+1)=\mathcal B_{n+1, k}|\nu|\le e^{(-\lambda+\varsigma +\varepsilon)(n+1)} 
e^{-\varsigma (n+1)} |\nu|<e^{-\varsigma (n+1)}\delta<\frac 12\eta e^{-\varsigma (n+1)}.
\]

 {\it Estimation of $T^{[k]}_2(n+1)$.}
For $n< \bar N_2$, by \eqref{def:Jn}, \eqref{def:calA1}, \eqref{est:Ank},  we have on $ \Omega_{\gamma k}\cap \Omega_{\tau k}$,
\begin{equation}
\label{est:T21}
\begin{split}
T^{[k]}_2(n+1)&= C\sum_{j=0}^{n}|y_j|^{1+\kappa} \mathcal B_{n+1, k}\left[\mathcal B_{j+1, k}\right]^{-1}\\&
\le C \eta^{1+\kappa}\sum_{j=0}^{n}e^{-(1+\kappa)\varsigma j}\prod_{i=j+k}^{n+k}|\Theta_i|\le CM \eta^{1+\kappa} \sum_{j=0}^{\bar N_2}e^{-(1+\kappa)\varsigma j}\\&
\le \frac 12 \eta e^{-\varsigma (n+1)}  \eta^{\kappa} 2 CM e^{\varsigma (\bar N_2+1)}  \sum_{j=0}^{\bar N_2}e^{-(1+\kappa)\varsigma j}.
\end{split}
\end{equation}
By substituting estimate \eqref{def:eta} of $\eta$ into \eqref{est:T21} we arrive at  
$$
\begin{array}{ll}
T^{[k]}_2(n+1) \le & \displaystyle  \frac 12 \eta e^{-\varsigma (n+1)} 
\left( 2CMe^{\varsigma (\bar N_2+1)}\sum_{j=0}^{\bar N_2}e^{-(1+\kappa)\varsigma j}  \right)^{-1} \times
\\ & \displaystyle  \times 2 CM e^{\varsigma (\bar N_2+1)}  \sum_{j=0}^{\bar N_2}e^{-(1+\kappa)\varsigma j}
 \le \frac 12 \eta e^{-\varsigma (n+1)}.
\end{array}
$$
For  $n\ge \bar N_2$, applying \eqref{est:Ink} and  \eqref{def:N2}, we get 
\begin{equation*}
\begin{split}
&T^{[k]}_2(n+1)=C \mathcal B_{n+1, k}\sum_{j=0}^{n}|y_j|^{1+\kappa}\left[ \mathcal B_{j+1, k}\right]^{-1}\\&\le C e^{(-\lambda+\varsigma +\varepsilon)(n+1)}e^{-\varsigma (n+1)} \eta^{1+\kappa}\sum_{j=0}^{n}e^{-(1+\kappa)\varsigma j}e^{(2\varepsilon+\varsigma)(j+1)}e^{(\lambda-\varsigma -\varepsilon)(j+1)}\\&
=\frac 12\eta  e^{-\varsigma(n+1)}2C\eta^{\kappa}e^{(-\lambda+\varsigma +\varepsilon)(n+1)}e^{(2\varepsilon+\varsigma)}\sum_{j=0}^{n}e^{-(\kappa \varsigma-2\varepsilon)j}e^{(\lambda-\varsigma -\varepsilon)(j+1)}\\&< \frac 12\eta  e^{-\varsigma (n+1)},
\end{split}
\end{equation*}
which completes the proof.
\end{proof}

\subsection{Some remarks and the case $\tau\equiv 0$.}
\label{subsec:tau0}

\begin{remark}
\label{rem:inu}
By construction of estimate \eqref{est:mainzn} and 
the choice of $\eta$, on $ \Omega_{\gamma k}\cap \Omega_{\tau k}$,  
a solution $y_n$ to equation \eqref{eq:stochmodk}, which starts in $I_\delta$ 
will stay in $I_u$ for all $n\in {\mathbb N}$. 
This implies that, on $ \Omega_{\gamma k}\cap \Omega_{\tau k}$, $y_n$ is also  a solution to the equation
\begin{equation}
\label{eq:stoch2}
y_{n+1}= f(y_n)+\sigma \xi_{k+n+1}y_n, \quad y_0=\nu.
\end{equation}
Note  that $y_n$ does not necessarily belong to  $I_\delta$ for all $n$. However, after at most  $\bar n$ steps, $y_n$ returns to $I_\delta$ and remains there. The number $\bar n$ can be calculated as the minimum $n$  for which $\eta e^{-\varsigma n}<\delta$. In other words,  
\[
\bar n:=\left[ \varsigma^{-1} \ln \left( \frac {\eta}{\delta} \right)  \right]+1.
\]
\end{remark}
If the initial value $\nu$ belongs to the interval $I_\delta$ with probability 1, i.e. $\mathbb P\left\{\nu\in I_\delta\right\}=1$, we have   $\mathbb P\left\{\tau=0\right\}=\mathbb P\left\{\Omega_{\tau 0}\right\}=1$ and  $\mathbb P\left\{\Omega_{\tau k}\right\}=0$ for all $k>0$. So 
\[
\mathbb P\left\{\Omega_{\gamma k}\cap \Omega_{\tau k}    \right\}=0, \quad\forall k>0,
\]
and then, by \eqref{def:taugamman3} and \eqref{est:taugamma}, we have 
\[
1-\gamma< \mathbb P\left\{\Omega_{\tau, \gamma}\right\}=\mathbb P\left\{\bigcup_{k=0}^{\infty}  \left[\Omega_{\gamma k}\cap \Omega_{\tau k} \right] \right\}=\sum_{k=0}^{\infty} \mathbb  P\left\{ \Omega_{\gamma k}\cap 
\Omega_{\tau k}         \right\}= \mathbb P \left\{\Omega_{\gamma 0}\cap \Omega_{\tau 0}    \right\}.
 \]
Thus we arrive at the corollary which can be treated as the main result for $\nu \in I_{\delta}$.

\begin{corollary}
\label{cor:k0}
Let   $\gamma \in (0, 1)$, Assumptions \ref{as:chixi}, \ref{as:fu} and \ref{as:lambda} 
hold, $\varsigma$, $\eta$, $\delta$, be defined by  \eqref{def:varsigmaepsilon}, \eqref {def:eta},  \eqref{def:delta}, respectively, and $x_n$ be a solution  to \eqref{eq:stoch21} with 
$x_0=\nu$, where  $\mathbb P\left\{\nu\in I_\delta\right\}=1$.  Then, with a probability greater than $1-\gamma$,
\begin{equation}
\label{est:mainzn1}
|x_n|\le \eta e^{-\varsigma n}, \quad  \forall n\in \mathbb N.
\end{equation}
\end{corollary}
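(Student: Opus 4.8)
The plan is to deduce Corollary~\ref{cor:k0} directly from Lemma~\ref{lem:expxn} by specializing to the case $\tau \equiv 0$, which is precisely the setup developed in Section~\ref{subsec:tau0}. First I would observe that the hypothesis $\mathbb P\{\nu\in I_\delta\}=1$ forces $\mathbb P\{\Omega_{\tau 0}\}=1$ and $\mathbb P\{\Omega_{\tau k}\}=0$ for all $k\geq 1$; consequently $\Omega_{\tau 0}=\Omega$ up to a null set, and the random time $\tau$ trivially satisfies Assumption~\ref{as:tau1} (it is constant, hence a.s. finite and independent of every $\xi_i$). In particular equation \eqref{eq:stochmodk} with $k=0$ is exactly equation \eqref{eq:stoch21} with initial value $x_0=\nu$, so a solution $x_n$ to \eqref{eq:stoch21} is a solution $y_n$ to \eqref{eq:stochmodk} for the index shift $k=0$.

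Next I would invoke Lemma~\ref{lem:expxn} with $k=0$. Since $\nu\in I_\delta$ on $\Omega_{\tau 0}=\Omega$ (a.s.), the lemma gives that, on $\Omega_{\gamma 0}\cap\Omega_{\tau 0}$,
\[
|x_n|\le \eta e^{-\varsigma n}, \qquad \text{for all } n\in\mathbb N,
\]
with $\varsigma$, $\eta$, $\delta$ the constants fixed in \eqref{def:varsigmaepsilon}, \eqref{def:eta}, \eqref{def:delta}. It remains only to estimate the probability of the set on which this holds. Because $\mathbb P\{\Omega_{\tau 0}\}=1$, we have $\mathbb P\{\Omega_{\gamma 0}\cap\Omega_{\tau 0}\}=\mathbb P\{\Omega_{\gamma 0}\}$, and by \eqref{ineq:probk} (equivalently \eqref{ineq:prob1}) this is at least $1-\gamma/2>1-\gamma$. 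Alternatively, and matching the exposition immediately preceding the statement, one uses \eqref{def:taugamman3} and \eqref{est:taugamma}: since all terms with $k\geq 1$ vanish, $\mathbb P\{\Omega_{\tau,\gamma}\}=\mathbb P\{\Omega_{\gamma 0}\cap\Omega_{\tau 0}\}>1-\gamma$. Either way we obtain a set of probability exceeding $1-\gamma$ on which \eqref{est:mainzn1} holds, which is the claim.

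There is essentially no obstacle here: the corollary is a corollary, and the entire analytic content — the inductive exponential bound, the careful calibration of $\bar N_2$, $\eta$, $\delta$ via Lemma~\ref{lem:aux} and the two-regime ($n<\bar N_2$ versus $n\ge\bar N_2$) splitting of $T^{[k]}_1$ and $T^{[k]}_2$ — has already been carried out in the proof of Lemma~\ref{lem:expxn}. The only points that need a word of care are (i) checking that the degenerate $\tau\equiv 0$ indeed satisfies Assumption~\ref{as:tau1} so that Lemma~\ref{lem:expxn} applies, and (ii) making sure one invokes the lemma with the index shift $k=0$ so that \eqref{eq:stochmodk} coincides with \eqref{eq:stoch21}. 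Both are immediate. If anything is mildly delicate, it is merely bookkeeping: confirming that the constants $\varsigma,\eta,\delta$ quoted in the corollary are literally those produced in Section~\ref{subsec:defrel}, so that no re-derivation is needed and the single application of Lemma~\ref{lem:expxn} closes the argument.
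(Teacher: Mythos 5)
Your argument is correct and follows essentially the same route as the paper: you specialize to $\tau\equiv 0$, check that this degenerate $\tau$ satisfies Assumption~\ref{as:tau1}, identify equation \eqref{eq:stochmodk} with $k=0$ as \eqref{eq:stoch21}, and apply Lemma~\ref{lem:expxn} on the set $\Omega_{\gamma 0}\cap\Omega_{\tau 0}$, whose probability exceeds $1-\gamma$. Your slightly sharper direct bound $\mathbb P\{\Omega_{\gamma 0}\}>1-\gamma/2$ via \eqref{ineq:probk} is also fine and is a harmless refinement of the paper's own computation through \eqref{est:taugamma}.
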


\begin{remark}
\label{rem:a}
If  $\mathbb P\left\{\nu \in I_\delta\right\}=1$,  estimate \eqref{est:n3} holds with $n_0=0$.
\end{remark}


\subsection{Proof of Theorem \ref{thm:main1}}
\label{subsec:Prmain1}
Set,  as in  \eqref{def:f}, $x_n=z_n-K$. For an arbitrary $k\in \mathbb N_0$, we have 
$\tau(\omega)=k$ on  $\Omega_{\tau k}$, by definition, therefore,   $x_{\tau}(\omega)=x_k(\omega)$  whenever  $\omega\in\Omega_{\tau k}$.  

Denoting $y_n^{[k]}:=x_{k+n}$, we consider on $\Omega_{\tau k}$ the following problem
\begin{equation*}
y^{[k]}_{n+1}=f\left(y^{[k]}_n\right)+\sigma y^{[k]}_n\xi_{k+n+1}, \quad y_0=x_{k}=z_k-K,  \,\, n\in \mathbb N_0,
\end{equation*}
see \eqref{def:f}.
By Remark \ref{rem:inu} (and Lemma \ref{lem:expxn}) we conclude that, 
on $\Omega_{\tau k}$, $y^{[k]}_n$ is also a solution  to \eqref{eq:stochmodk} with $\nu=x_{k}$. 
Lemma \ref{lem:expxn}  and Remark~\ref{rem:inu}  imply the estimates
\begin{equation}
\label{est:zk}
|y^{[k]}_n|\le \eta e^{-\zeta n}, \quad \text{or} \quad |x_{n+k}|\le \eta e^{-\zeta n},  \quad \text{or} \quad 
|z_{n+k}-K|\le \eta e^{-\zeta n},
\end{equation}
which hold on  $\Omega_{\tau k}\cap \Omega_{\gamma k}$,   for each 
$n\in \mathbb N_0$.
Set,  for each $n\in \mathbb N_0$,
\[
z_{n+\tau}(\omega)=z_{n+k}(\omega), \quad \text{when} \quad \omega\in \Omega_{\tau k}\cap \Omega_{\gamma k}, \,\, k\in \mathbb N_0,
\]
then \eqref{est:zk} yields that, for each $n\in \mathbb N_0$,
\begin{equation}
\label{est:ztau}
|z_{n+\tau}-K|\le \eta e^{-\zeta n}, \quad \text{for} \quad \omega\in \Omega_{\tau, \gamma}=\bigcup_{k=1}^{\infty} \left[ \Omega_{\tau k}\cap \Omega_{\gamma k} \right].
\end{equation}
Substituting  $m:=n+\tau$  into \eqref{est:ztau} and  applying \eqref{est:taugamma}, we arrive at part (i),(a). Part  (i),(b) is a corollary of (i),(a).

Choose $n_0$ and set  $\Omega^{[1]} := \Omega_{\tau, \gamma, n_0}$, as in Lemma \ref{lem:estg}. Then
\[
\mathbb P\left(\Omega_{\tau, \gamma, n_0}  \right)>1-\gamma, \quad \tau(\omega)\le n_0, \quad \text{if} \quad \omega \in \Omega_{\tau, \gamma, n_0} .
\]
From \eqref{est:ztau}  we can conclude that, whenever $m\ge n_0$, with a probability exceeding $1-\gamma$, we have the estimate
\[
|z_{m}-K|\le \eta e^{-\zeta (m-n_0)}, \quad m=n_0, n_0+1, n_0+2, \dots,
\]
which concludes the proof.

\section{Appendix B}
 \label{sec:DWC}
 In this section we discuss Directed  Walks Control with changing parameters and present proofs of Theorems \ref{thm:main2} and  \ref{thm:main3}.
 
 
 \subsection{Directed  Walks Control with changing parameters}
 \label{subsec:DWC}

Assume that $z_{n} \in I_{u, K}$, and define  the control that pushes the next $z_{n+1}$ towards ~$K$ by
applying \eqref{def:DWIntr}.
To the best of our knowledge, this method has not been introduced earlier. 
Obviously, $\alpha_j$ should depend on the closeness of $x$ to $K$, to avoid the situation 
when $|x_{n+1}-K| > |x_n-K|$. In a sequence of $\alpha=\alpha_j$ in \eqref{def:DWIntr}, 
we assume that $\alpha_j=a^{-1} \alpha_{j-1}$, $j\in {\mathbb N}$, where $a>1$, 
i.e. $(\alpha_j)$, $j=0,1, \dots, k-1$, is a geometrically decaying finite positive sequence. 

In \eqref{def:DWIntr}, $\zeta_n$ and $\chi_n$  satisfy Assumption \ref{as:chixi}, 
$\ell, \ell_c>0$ are some parameters, which, together with a number $k\in \mathbb N$, finite sequences of numbers $\alpha_j$ and intervals $J^{(j)}$, will be described below.  

We  recall that the purpose of subsequent controls 
with $\alpha_j$ is to bring a solution, which is originally in the interval $I_{u, K}=J^{(0)}$, 
to the interval $I_{\delta, K}=J^{(k)}=I^{(1)}$. 

Assume that there exist $\bar q>0$, $\underline  L>0$,  and $u>0$ such that 
\begin{equation}   
\label{cond:Lqno2}
\bar q<\underline L \quad \mbox{and}\quad  \underline L |z-K|\le |{\rm f} (z)-K| \leq (1+\bar q) |z-K| \quad \mbox{if} \quad z\in I_{u, K}.
\end{equation}
So far \eqref{cond:Lqno2} is the only limitation on $\rm f$ and $u$-neighborhood of $K$. This is equivalent to $\bar q < \underline L
\leq 1+\bar q$, where $1+\bar q$ and $\underline L$ are the upper and the lower bounds of $| {\rm f}(z)-K|/|z-K|$, respectively. In particular,
\eqref{cond:Lqno2} is satisfied if ${\rm f}$ is continuously differentiable and the variation of 
$|{\rm f'}(x)|$ in  $I_{u, K}$ is less than one. 
Lemma~\ref{lem:uCkappa} presents another sufficient condition.

\begin{lemma}
\label{lem:uCkappa}
Let function $\rm f$ satisfy Assumption \ref{as:fu} and condition \eqref {cond:uCkappa} hold.
Then condition \eqref{cond:Lqno2} holds with $\underline L:=1+q-Cu^\kappa$, $\bar q:=q+Cu^\kappa$.
\end{lemma}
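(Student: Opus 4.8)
\textbf{Proof plan for Lemma~\ref{lem:uCkappa}.}
The plan is to use the representation \eqref{repr:rmf} from Assumption~\ref{as:fu} together with the bound \eqref{eq:phi} on $\phi$ to directly estimate the ratio $|{\rm f}(z)-K|/|z-K|$ from above and below, and then to observe that the resulting constants satisfy the ordering required in \eqref{cond:Lqno2}. First I would fix $z\in I_{u,K}$, write $x:=z-K$, so that $x\in I_u$, i.e. $|x|\le u$, and rewrite \eqref{repr:rmf} as ${\rm f}(z)-K=-(1+q)x+\phi(x)$. By the triangle inequality,
\[
\bigl|\,{\rm f}(z)-K\,\bigr|\le (1+q)|x|+|\phi(x)|\le (1+q)|x|+C|x|^{1+\kappa}=(1+q+C|x|^{\kappa})|x|,
\]
and since $|x|\le u$ this is at most $(1+q+Cu^\kappa)|x|=(1+\bar q)|x|$ with $\bar q:=q+Cu^\kappa$. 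Symmetrically, the reverse triangle inequality gives
\[
\bigl|\,{\rm f}(z)-K\,\bigr|\ge (1+q)|x|-|\phi(x)|\ge (1+q)|x|-C|x|^{1+\kappa}=(1+q-C|x|^{\kappa})|x|\ge (1+q-Cu^\kappa)|x|,
\]
using again $|x|\le u$ (and noting that the coefficient $1+q-C|x|^\kappa$ is a decreasing function of $|x|$). This is exactly $\underline L|x|$ with $\underline L:=1+q-Cu^\kappa$.

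It then remains to check that these two constants are admissible, i.e. that $\underline L>0$ and $\bar q<\underline L$. Positivity of $\underline L$ follows from condition \eqref{cond:uCkappa}: since $Cu^\kappa<\tfrac12$, we have $\underline L=1+q-Cu^\kappa>1+q-\tfrac12=\tfrac12+q>0$. For the ordering, $\bar q<\underline L$ is equivalent to $q+Cu^\kappa<1+q-Cu^\kappa$, i.e. $2Cu^\kappa<1$, which is precisely \eqref{cond:uCkappa}. Hence \eqref{cond:Lqno2} holds with the stated $\underline L$ and $\bar q$, which completes the proof.

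There is essentially no obstacle here: the argument is a one-line application of the (forward and reverse) triangle inequality to the decomposition \eqref{repr:rmf}, followed by bookkeeping on the constants. The only point requiring a moment's care is the monotonicity remark that $1+q-C|x|^\kappa\ge 1+q-Cu^\kappa$ for $|x|\le u$, which makes the lower bound uniform on $I_{u,K}$; and the observation that \eqref{cond:uCkappa} simultaneously guarantees $\underline L>0$ and $\bar q<\underline L$, so that the hypotheses of \eqref{cond:Lqno2} are genuinely met and not vacuous.
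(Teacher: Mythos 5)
Your proof is correct and takes essentially the same route as the paper: both you and the authors use the decomposition \eqref{repr:rmf} together with the bound \eqref{eq:phi} on $\phi$ to obtain the upper and lower estimates on $|\mathrm f(z)-K|/|z-K|$, and both verify $\bar q<\underline L$ (and $\underline L>0$) directly from $Cu^\kappa<\tfrac12$. The only cosmetic difference is that you spell out the forward/reverse triangle inequality and the monotonicity of $1+q-C|x|^\kappa$ a bit more explicitly than the paper does.
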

\begin{proof}
Note first that, by \eqref{cond:uCkappa}, we have
\[
Cu^\kappa<\frac 12<1+q, \quad \bar q= q+Cu^\kappa<1+q-Cu^\kappa=\underline L.
\]
So $\underline L$ is well defined and satisfies the first condition in \eqref {cond:Lqno2}. To check the second one 
we note that, under Assumption \ref{as:fu}, we have
\[
\sup_{x\in I_{u, K}}|\phi(x-K)|\le C |x-K|^{1+\kappa}\le Cu^\kappa |x-K|,
\]
and thus
\begin{eqnarray*}
&&| {\rm f}(z)-K|\le (1+q+Cu^\kappa) |z-K|=(1+\bar q) |z-K|,\\
&&|{\rm f}(z)-K|\ge (1+q-Cu^\kappa) |z-K|=\underline L |z-K|.
\end{eqnarray*}
\end{proof}

\begin{remark}
\label{rem:1}
Let us note, first, that \eqref{cond:Lqno2} is the only condition required 
for successful implementation of DWC method, and, second,  \eqref{cond:Lqno2} is less restrictive than 
Assumption~\ref{as:fu}: 
for example, ${\rm f} (z)= 1+ 1.4(z-1)-0.2|z-1|$ satisfies \eqref{cond:Lqno2} with $K=1$, $\bar q=0.6$, 
$\underline L=1.2$, while Assumption~\ref{as:fu} is obviously not satisfied. 
\end{remark}

\begin{remark}
\label{rem:2}
If $I_{u,K}$ includes an attractive interval of $\rm f$, the non-controlled sequence enters $I_{u, K}$ at a certain  step: $z_n \in I_{u, K}$ for some $n \in {\mathbb N}$, where $n$ depends on $z_0$.  
If, for example, $\rm f$ is a function with an attractive interval $[z_1,z_2]$ and $K \in (z_1,z_2)$ 
(this includes the case of a stable 2-cycle $\{ z_1,z_2 \}$, $z_1\in (0, K)$, $z_2\in (K,\infty)$ ), and we choose  $u \geq \max\{ 
K-z_1, z_2-K\}$ then, after a finite number of steps, a non-controlled sequence $(z_n )$  enters $I_{u, K}$. 
\end{remark}

Based on \eqref{cond:Lqno2}, choose some 
\begin{equation}   
\label{def:a}
a\in \left(1, \frac {\underline L}{\bar q}\right) \quad \mbox{and denote} \quad \varepsilon_0:=\underline L-\bar qa.
\end{equation}

Define 
\begin{equation}   
\label{def:Kualphaa}
k_{\delta, u}:=\min\left\{j\in {\mathbb N}:  ua^{-(j+1)} <\delta\right\}=\left[\log_a\frac{u}{\delta}\right],
\end{equation}
where $[t]$ is the maximal integer not exceeding $t$,
and a sequence of shrinking neighborhoods of $K$
\begin{equation}   
\label{def:Ika}
I^{(j)}=(K-a^{-j}u, K+a^{-j}u), \quad j = 0, 1, \dots , k_{\delta, u}.
\end{equation}
Thus $k_{\delta, u}$ is the number of the last interval in the sequence $(I^{(j)})$ which strictly includes 
$I_{\delta, K}$, so $I^{(k_{\delta, u}+1)}\subseteq I_{\delta, K}\subset I^{(k_{\delta, u})}$. 
Hence  
for the nested intervals in \eqref{def:DWIntr} we have 
\begin{equation}
\label{def:kJ}
k : = k_{\delta, u}+1, \,\, J^{(0)}:=I_{u, K},\, \, J^{(j)}:=I^{(j)},\,\, \forall \, j=1, \dots,  k_{\delta, u}, \,\,  
J^{(k)}:=I_{\delta, K}.
\end{equation}

Let us define a bound for $\ell_c$ and a constant $B$ satisfying
\begin{equation}   
\label{def:lcBbetasigma}
\ell_c\in \left(0, \, \frac{\varepsilon_0}{2 \bar qa +\varepsilon_0}\right), \quad B\in \left(\frac{\bar qa^2}{\underline L(1-\ell_c)}, \, \frac a{1+\ell_c} \right).
\end{equation}
This allows us to determine bounds for $\ell$ as follows:
\begin{equation}   
\label{def:ell}
\rho:=1+\bar q-\frac{\underline L(1-\ell_c)B}{a^2}, \quad \mu \in (0, 1-\rho), \quad \ell \leq  (1 - \rho -\mu ) \delta.
\end{equation}

\begin{lemma} 
\label{lem:prim2}
Let condition \eqref{cond:Lqno2} hold, $a$ and $\varepsilon_0$ be defined as in \eqref{def:a}. 
Then $\ell_c$, $B$ and $\rho$ are  positive numbers, well defined by  
\eqref{def:lcBbetasigma} and \eqref{def:ell}, respectively.
\end{lemma}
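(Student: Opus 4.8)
The plan is to verify, in the order they are introduced, that $\ell_c$, $B$ and $\rho$ are well defined and positive, the key being the algebraic identity $\underline L=\bar qa+\varepsilon_0$ coming from \eqref{def:a} that links the three bounds together.

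First I would treat $\ell_c$. By \eqref{cond:Lqno2} we have $\bar q>0$, and by \eqref{def:a} we have $a>1$ and $\varepsilon_0:=\underline L-\bar qa>0$; hence $\varepsilon_0/(2\bar qa+\varepsilon_0)$ is a positive number strictly less than $1$, so the interval in \eqref{def:lcBbetasigma} for $\ell_c$ is a nonempty interval of positive reals, and any admissible $\ell_c$ satisfies $\ell_c\in(0,1)$, in particular $1-\ell_c>0$. Next I would check that the interval for $B$ in \eqref{def:lcBbetasigma} is nonempty. Both endpoints $\bar qa^2/(\underline L(1-\ell_c))$ and $a/(1+\ell_c)$ are manifestly positive since $1-\ell_c>0$, $\underline L>\bar q>0$ and $a>1$, so it remains to see the left one is smaller. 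Rearranging, $\bar qa^2/(\underline L(1-\ell_c))<a/(1+\ell_c)$ is equivalent to $\ell_c(\bar qa+\underline L)<\underline L-\bar qa=\varepsilon_0$; and since $\varepsilon_0=\underline L-\bar qa$ gives $\bar qa+\underline L=2\bar qa+\varepsilon_0$, this is precisely the defining restriction $\ell_c<\varepsilon_0/(2\bar qa+\varepsilon_0)$. Thus $B$ exists and is positive.

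Finally I would bound $\rho=1+\bar q-\underline L(1-\ell_c)B/a^2$. For $\rho>0$ I would use the upper bound $B<a/(1+\ell_c)$, which gives $\underline L(1-\ell_c)B/a^2<\underline L(1-\ell_c)/(a(1+\ell_c))<\underline L\le 1+\bar q$, the last inequality being the equivalent form $\bar q<\underline L\le 1+\bar q$ of condition \eqref{cond:Lqno2}; hence $\rho>0$. For $\rho<1$, needed so that the interval $(0,1-\rho)$ for $\mu$ and the bound $\ell\le(1-\rho-\mu)\delta$ in \eqref{def:ell} are meaningful, I would use the lower bound $B>\bar qa^2/(\underline L(1-\ell_c))$ from \eqref{def:lcBbetasigma}, which at once yields $\underline L(1-\ell_c)B/a^2>\bar q$, i.e. $\rho<1$; then any $\mu\in(0,1-\rho)$ makes $1-\rho-\mu>0$, so the bound on $\ell$ is a positive number as well. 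There is no deep obstacle in the argument; the only point requiring care is that the bound on $\ell_c$ in \eqref{def:lcBbetasigma} is exactly the one forcing the $B$-interval to be nonempty (through the identity $2\bar qa+\varepsilon_0=\bar qa+\underline L$), so the steps must be carried out in this order.
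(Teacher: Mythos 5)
Your proof is correct and follows essentially the same route as the paper's: you verify emptiness of the $\ell_c$-interval, show the $B$-interval is nonempty by reducing the endpoint comparison to the very bound on $\ell_c$ via the identity $\underline L+\bar qa=2\bar qa+\varepsilon_0$, and then derive $0<\rho<1$ from the two endpoint bounds on $B$ exactly as the paper does. The only cosmetic difference is that for $\rho>0$ you compare to $\underline L\le 1+\bar q$ directly while the paper goes through $1+\bar q-\underline L/a>0$; these are equivalent one-line conclusions.
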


\begin{proof}
Let us note that by \eqref{def:lcBbetasigma}, $\ell_c\in (0,1)$. Further,
$B$ is well defined in \eqref{def:lcBbetasigma} if 
$$
\frac{\bar qa^2}{\underline L(1-\ell_c)}<\frac{a}{(1+\ell_c)},
$$
which is equivalent to
$$
\ell_c < \frac{ \underline L - \bar qa } {\underline L +\bar qa }= \frac{\varepsilon_0}{2 \bar qa +  \varepsilon_0}
$$ 
and is satisfied due to the definition of $\varepsilon_0$ in \eqref{def:a}.
The fact  that 
all other parameters  in \eqref{def:lcBbetasigma}-\eqref{def:ell} are well defined follows from the inequalities
\begin{equation*}
\begin{split}
&1+ \bar{q}-\frac{\underline L(1-\ell_c)B}{a^2}<1+\bar q - \frac{\underline L(1-\ell_c)\frac{\bar qa^2}{\underline L(1-\ell_c)}}{a^2} = 1,\\
&
1+ \bar{q}-\frac{\underline L(1-\ell_c)B}{a^2}>1+\bar q -\frac {\underline L(1-\ell_c)\frac a{1+\ell_c}}{a^2}>1+\bar q -\frac {\underline L}{a}>0.
\end{split}
\end{equation*}
\end{proof}

For each $j =0, 1, \dots,  k_{\delta, u}+1$  we define 
\begin{equation}   
\label{def:contr}
\alpha_j = \underline L a^{-(j+2)}u B,
\end{equation}
where $a$ and $B$ are from \eqref{def:a} and \eqref{def:lcBbetasigma}, respectively.

The following theorem proves that suggested DWC method \eqref{def:DWIntr}, 
after a certain number of steps, brings a solution with $z_0 \in I_{u,K}$ to $I_{\delta,K}$.

\begin{theorem} 
\label{thm:prim2}
Let Assumption \ref{as:chixi} and conditions \eqref{cond:Lqno2}, \eqref {def:lcBbetasigma} hold, 
$\delta\in (0, u)$ and $\mu$ be  defined as in \eqref{def:ell}.
Let $z_n$ be a solution to \eqref{def:DWIntr} with  the initial value $z_0\in I_{u, K}$, 
where 
$J^{(j)}$, $k$ and $\alpha_j$ satisfy  \eqref{def:Ika}, \eqref{def:kJ} and \eqref{def:contr}.   

Then there exists a random moment $\tau$ such that, for any $\omega \in \Omega$, 
\begin{equation}   
\label{est:steps}
z_\tau (\omega)\in I_{\delta,K}, \quad \mbox{and} \quad  \tau(\omega)\le \frac{\ln(\delta/u)}{\ln(1-\mu) } .
\end{equation}
\end{theorem}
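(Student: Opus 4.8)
The plan is to show that on each interval $J^{(j)}$, one step of the DWC \eqref{def:DWIntr} contracts the distance to $K$ by a factor at most $1-\mu$, so that after at most $k$ steps the solution reaches $J^{(k)}=I_{\delta,K}$, and then to convert the bound $k\le \log_a(u/\delta)+1$ into the bound on $\tau$ stated in \eqref{est:steps}. The key algebraic fact to establish is the one-step estimate: if $z_n\in J^{(j)}\setminus J^{(j+1)}$ for some $j\le k_{\delta,u}$, i.e.\ $a^{-(j+1)}u\le |z_n-K| < a^{-j}u$, then $|z_{n+1}-K| \le (1-\mu)|z_n-K|$, and in particular $z_{n+1}\in J^{(j+1)}$. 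First I would write $w_n := {\rm f}(z_n)-K$, so that by \eqref{cond:Lqno2} we have $\underline L|z_n-K|\le |w_n|\le (1+\bar q)|z_n-K|$, and \eqref{def:DWIntr} becomes
\[
z_{n+1}-K = w_n - \alpha_j(1+\ell_c\chi_{n+1})\,\mathrm{sgn}(w_n) + \ell\zeta_{n+1}.
\]
Since $w_n$ and $-\alpha_j\,\mathrm{sgn}(w_n)$ have opposite signs and $|1+\ell_c\chi_{n+1}|\in[1-\ell_c,1+\ell_c]$ (using $|\chi_{n+1}|\le 1$ from Assumption~\ref{as:chixi}), the directed term strictly reduces $|w_n|$ provided $\alpha_j(1+\ell_c)\le |w_n|$, which follows from \eqref{def:contr}, \eqref{def:lcBbetasigma} and $|w_n|\ge \underline L a^{-(j+1)}u$: indeed $\alpha_j(1+\ell_c) = \underline L a^{-(j+2)}uB(1+\ell_c) < \underline L a^{-(j+1)}u$ because $B(1+\ell_c)<a$.

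Next I would produce the two-sided bound on $|z_{n+1}-K|$. On the one hand, a triangle inequality gives an upper bound of the form $|z_{n+1}-K|\le |w_n| - \alpha_j(1-\ell_c) + \ell \le (1+\bar q)|z_n-K| - \underline L a^{-(j+2)}uB(1-\ell_c) + \ell$; using $|z_n-K|<a^{-j}u$ to replace $a^{-(j+2)}u$ by $a^{-2}|z_n-K|$ (this is where the nesting is used and the bound $|z_n-K|\ge a^{-(j+1)}u$ becomes $|z_n-K|a^{-2}\le a^{-(j+2)}u \cdot a^{-1}\cdot a$; more carefully one writes $a^{-(j+2)}uB(1-\ell_c) \ge a^{-2}B(1-\ell_c)|z_n-K|$ since $|z_n-K|\le a^{-j}u$), we obtain
\[
|z_{n+1}-K| \le \Bigl(1+\bar q - \tfrac{\underline L(1-\ell_c)B}{a^2}\Bigr)|z_n-K| + \ell = \rho\,|z_n-K| + \ell,
\]
with $\rho$ as in \eqref{def:ell} and $\rho\in(0,1)$ by Lemma~\ref{lem:prim2}. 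On the relevant annulus $|z_n-K|\ge a^{-(j+1)}u \ge \delta$ (valid for $j\le k_{\delta,u}$), the additive term satisfies $\ell \le (1-\rho-\mu)\delta \le (1-\rho-\mu)|z_n-K|$, so $|z_{n+1}-K|\le (1-\mu)|z_n-K|$, proving the claimed contraction. One also needs to check the lower-bound side is irrelevant here — i.e.\ that the step cannot overshoot past $K$ in a way that leaves $I_{u,K}$ — which follows because $|z_{n+1}-K|\le(1-\mu)|z_n-K| < |z_n-K| < u$, so the solution never escapes $I_{u,K}=J^{(0)}$ and the representation \eqref{cond:Lqno2} stays available at every step.

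Finally I would iterate: starting from $z_0\in I_{u,K}$, as long as $z_n\notin I_{\delta,K}$ we have $|z_n-K|\ge \delta$, hence the contraction $|z_{n+1}-K|\le(1-\mu)|z_n-K|$ applies, giving $|z_n-K|\le (1-\mu)^n |z_0-K| < (1-\mu)^n u$. Thus once $(1-\mu)^n u < \delta$, i.e.\ $n > \ln(\delta/u)/\ln(1-\mu)$, we must have $z_n\in I_{\delta,K}$; define $\tau$ as the first such $n$, which is deterministic-ly bounded by $\ln(\delta/u)/\ln(1-\mu)$ as in \eqref{est:steps}, uniformly over $\omega$. (Strictly, one should note $\tau$ is still a random variable because the precise step at which the solution first enters $I_{\delta,K}$ depends on the realizations of $\chi$ and $\zeta$, but the bound holds surely.) I expect the main obstacle to be bookkeeping in the one-step estimate — in particular keeping track of which interval index $j$ is active (the control parameter $\alpha_j$ depends on which $J^{(j)}$ contains $z_n$, so one must verify that the contraction pushes $z_n$ into the \emph{next} interval $J^{(j+1)}$ and not merely back into $J^{(j)}$, so that the index genuinely advances) and confirming that the chosen ranges \eqref{def:lcBbetasigma}–\eqref{def:ell} make every inequality go through simultaneously; this is exactly what Lemma~\ref{lem:prim2} is set up to guarantee, so the remaining work is careful substitution rather than anything conceptually hard.
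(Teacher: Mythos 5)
Your proof is correct and follows essentially the same route as the paper: the same one-step contraction $|z_{n+1}-K|\le\rho|z_n-K|+\ell\le(1-\mu)|z_n-K|$ derived from the same two bounds $\frac{(1+\ell_c)B}{a}<1$ and $\frac{\underline L(1-\ell_c)B}{a^2(1+\bar q)}$ on the ratio of the directed term to $|{\rm f}(z_n)-K|$, followed by iteration until first entry to $I_{\delta,K}$. The only genuine difference is in the counting at the end: the paper first shows that at most $\bar m=\bigl[-\ln a/\ln(1-\mu)\bigr]$ steps drop the solution from $I^{(j)}$ to $I^{(j+1)}$ and multiplies by $k_{\delta,u}$, whereas you iterate the uniform bound $|z_n-K|\le(1-\mu)^n|z_0-K|$ directly (valid as long as $z_m\notin I_{\delta,K}$ for $m<n$, since then $|z_m-K|\ge\delta$ keeps the additive term $\ell$ under control at every step regardless of which $J^{(j)}\setminus J^{(j+1)}$ is active). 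Your version is a little cleaner and yields the same bound. One thing to flag: the parenthetical claim ``in particular $z_{n+1}\in J^{(j+1)}$'' in your plan, and your worry in the last paragraph about ``verifying that the index genuinely advances after one step,'' are both unnecessary and, in general, false — a single step need not drop the iterate to the next nested interval unless $1-\mu<a^{-1}$, which is not assumed; several steps with the same $\alpha_j$ may be needed, exactly as in the paper. Fortunately your detailed iteration never uses that claim, so it does not affect the correctness of your argument.
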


\begin{proof} 
If, for some $n\in \mathbb N$ and $j=0, 1, \dots,  k_{u, \delta}$, we have  $z_{n} \in I^{(j)}\setminus I^{(j+1)}$, then
\[
|{\rm f }(z_n)-K|\ge \underline L|z_n-K|\ge \underline L a^{-(j+1)}u, \quad  |{\rm f }(z_n)-K|\le(1+\bar q) |z_n-K|\le(1+\bar q) a^{-j}u.
\]
We have, by  \eqref{def:lcBbetasigma}-\eqref{def:contr},
$\displaystyle \frac{\left|\alpha_j(1+\ell_c \chi_{n+1})\right|}{|{\rm f }(z_n)-K|}
\le \frac{\underline L a^{-(j+2)} u (1+\ell_c)B}{  \underline L a^{-(j+1)}u }=\frac {(1+\ell_c)B}{a}<1
$
\\
and
$\displaystyle 
\frac{\left|\alpha_j(1+\ell_c \chi_{n+1})\right|  }{|{\rm f }(z_n)-K|}\ge \frac{\underline L a^{-(j+2)} u (1-\ell_c)B}{(1+\bar q) a^{-j} u}=\frac {\underline L  (1-\ell_c)B }{a^2(1+\bar q)}$.

Note that 
$\displaystyle 
\frac {\underline L  (1-\ell_c)B }{a^2(1+\bar q)}<\frac {(1-\ell_c)B }{a^2}\le  \frac{(1-\ell_c)}{a(1+\ell_c)} <1$.
Then,\\
$\displaystyle 
\left|1-\frac{\alpha_j(1+\ell_c \chi_{n+1})}{|{\rm f }(z_n)-K|}\right|=1-\frac{\alpha_j(1+\ell_c \chi_{n+1})}{|{\rm f }(z_n)-K|}<1-\frac {\underline L  (1-\ell_c)B }{a^2(1+\bar q)}$.

By \eqref{def:lcBbetasigma} and the above estimates we have
\begin{equation*}
\begin{split}
&\left| z_{n+1}- K \right|=\left|  {\rm f }(z_n) -K+ \ell \zeta_{n+1}- \alpha_j(1+\ell_c \chi_{n+1})\frac{{\rm f }(z_n)-K}{|{\rm f }(z_n)-K|}  \right|
\\
\leq &\left| \left( 1- \frac{ \alpha_j(1+\ell_c \chi_{n+1})}{|{\rm f }(z_n)-K|} \right) ({\rm f }(z_n) -K) \right| + |\ell \zeta_{n+1}|\\
\le &  \left[1-\frac {\underline L (1-\ell_c)B}{a^2(1+\bar q)} \right] (1+\bar q)|z_n -K| + \ell \\
= & \rho  \left| z_{n}-K \right|+\ell\\
 \leq &  \rho  \left| z_{n}-K \right|+(1-\rho-\mu) \delta\\
\leq &  \rho  \left| z_{n}-K \right|+ (1-\rho - \mu)  \left| z_{n}-K \right| \\
= & (1-\mu) |z_{n} - K |.
\end{split}
\end{equation*}
So $z_{n+1}\in I^{(j)}$. If $z_{n+1}\in I^{(j)}\setminus I^{(j+1)}$, then we repeat the above calculations. Since
$\displaystyle 
(1-\mu)^{\bar m} a^{-j}u< a^{-j-1}u
$
for
\begin{equation}
\label{def:barm}
\bar m=\left[-\frac{\ln a}{\ln (1-\mu)}  \right],
\end{equation}
after at most $\bar m$ steps we have  $z_{n+\bar m}\in I^{(j+1)}$. This holds for each $j\le k_{u, \delta}$. 
Therefore, a.s.,  after at most
\[ 
\bar m \times  k(u, \delta)=\left[\frac{\ln u-\ln \delta}{\ln a} \right] \left[-\frac{\ln a}{\ln (1-\mu)}  \right] \le \frac{\ln(\delta/u)}{\ln(1-\mu) } 
\]
 steps  the solution $z_n$ will be in $I_{\delta, K}$.
\end{proof}

\begin{remark}
\label{rem:adaptive}
Theorem \ref{thm:prim2} implies that $n_j$ when switching between $(j-1)$-th and $j$-th control in \eqref{def:DWIntr} occurs, 
are a.s. finite random variables, and $n_j-n_{j-1}\le \bar m$ with $\bar m$ defined by 
\eqref{def:barm}.

So, started in $J^{(0)}$ solution $z_n$ of equation \eqref{def:DWIntr}, for the first time reaches  the interval 
$J^{(k)}\equiv I_{\delta,K}$ at 
a random time $\tau=\tau(u, \delta)$ with an upper estimate as in \eqref{est:steps}. 
After that moment, the new control by noise only (MNC) 
starts working. 
This control was discussed above, in particular, in Appendix A.
\end{remark}

\subsection{Proof of Theorem \ref{thm:main2}} 
\label{subsec:Prmain2}

Theorem \ref{thm:main2}
is a corollary of Theorems~\ref{thm:prim2} and \ref{thm:main1}  and Lemma \ref{lem:uCkappa}.

\subsection{Proof of Theorem \ref{thm:main3}} 
\label{subsec:Prmain3}

\begin{proof}
For simplicity of calculations, we assume $K=0$.

For $x_n\in I_\beta$, we have by Assumption~\ref{as:fu}
\[
|x_{n+1}|\le |1+q+\sigma||x_n|+|\phi(x_n)|\le \bar \Theta \beta+C\beta^{1+\kappa}.
\]
Denoting 
$\displaystyle \beta_1:=\sup \{\beta>0: \bar \Theta \beta+C\beta^{1+\kappa}<u \}$,
we conclude that
\begin{equation}
\label{rel: beta1u}
x_n\in I_{\beta_1} \implies x_{n+1}\in I_{u}.
\end{equation}
\begin{remark}
\label{rem:findbeta1}
Note that $\beta_1$  is easily found in the case $\kappa=1$. Also, assuming  that $\beta<1$ (which is not a restriction!) we can take
$\displaystyle \beta_1<\frac{u}{\bar\Theta +C}$.
\end{remark}
Now we look for $\beta_2$  and a nonrandom  number of steps $\bar s$ such that 
\begin{equation*}
x_n\in I_{\beta_2} \implies x_{n+\bar s}\in I_{\delta}, \quad \mbox{if $\xi_{n+i}\in (1-\iota,1]$, for all $i=1, 2, \dots, \bar s$.}
\end{equation*}
Assume that  $x_n\in I_{\beta_2}$ and $\xi_{n+1}\in (1-\iota,1]$, then
\begin{equation*}
|x_{n+1}|
\le |1+q-(1-\iota)\sigma||x_n|+|\phi(x_n)|\le |\underline\Theta_\iota | |x_n|+C|x_n|^{1+\kappa}
=( |\underline\Theta_\iota |+C\beta_2^\kappa)|x_n|.
\end{equation*}
Since $| \underline \Theta_\iota |<1$, we can take
\begin{equation}
\label{def:beta2H}
\beta_2<\sqrt[\kappa]{\frac{1- |\underline \Theta_\iota |}{C}},\quad H_\iota:= |\underline\Theta _\iota |+C\beta_2^\kappa<1,
 \end{equation}
choose
  \begin{equation}
\label{def:beta_1a}
 \beta <\min\left\{\beta_1, \beta_2\right\}<
  \min\left\{\sqrt[\kappa]{\frac{1- |\underline \Theta_\iota |}{C}},  \,\, \frac{u}{\bar\Theta+C}\right\},
 \end{equation}
 assuming without loss of generality that  $\beta > \delta$,  and set
\begin{equation}
\label{def:bars}
 \bar s:=\left[\frac{\ln \frac \delta{\beta}}{\ln H_\iota} \right]. 
 \end{equation}
 Then, for $i\le \bar s$ on $\Omega_{\iota, n}:=\{\omega\in \Omega: \xi_{n+i}\in (1-\iota,1], \,  i=1, 2, \dots, \bar s\}$, ~
$\displaystyle |x_{n+i}|\le H_\iota^i \beta$.

So, $x_{n+\tau_1}\in I_\delta$, on $\Omega_{\iota, n}$  after at most $\bar s$ steps, where the  random moment $\tau_1 \le \bar s$.

Also, by the  DWC method (see Section \ref{subsec:DWC}), we need  at most $\bar s_1$ steps to reach $I_{\beta}$ from $I_u$.
Let  $ \bar J=\bar s+\bar s_1, \,\,  (a, b]=\left(1-\iota,1 \right].$ By condition \eqref{cond:distr} we have 
$\mathbb P \left\{\xi\in \left(1-\iota,1 \right] \right\}= p_{\iota }>0.$
Applying  now  Lemma \ref{lem:barprob}, we conclude that  there exists an a.s. finite  random moment $\mathcal N$ such that, a.s., 
$\xi_{\mathcal N+i}\in\left( 1- \iota, 1 \right]$, $i=1, \dots, \bar J$.
Consider a solution $x_{\mathcal N}$ at the moment $\mathcal N$. It can be either $x_{\mathcal N}\in I_u\setminus I_{\beta}$ or  $x_{\mathcal N}\in I_{\beta}$.

If  $x_{\mathcal N}\in I_{\beta}$ and since $\xi_{\mathcal N+i}\in\left(1-\iota,1 \right]$ for  $i=1, \dots, \bar s$, we have 
\[
x_{\mathcal N+\tau_1 }\in I_\delta, \quad \mbox{for}\quad \tau_1 \le \bar s.
\]
For $x_{\mathcal N}\in I_u\setminus I_{\beta}$, applying the DWC method, which needs not more than $\bar s_1$ steps to reach $I_\beta$, we have  $x_{\mathcal N+\tau_2}\in I_{\beta}$, $\tau_2\le \bar s_1$.  Since, a.s.,  
\[
\xi_{\mathcal N+\tau_1+i}\in\left(1-\iota,1 \right], \quad \mbox{when} \quad i=1, \dots, \bar s,
\]
we have, a.s.,  
\begin{equation*}
x_{\mathcal N+\tau_1+\tau_2}\in I_\delta.
\end{equation*}
\end{proof}

\end{document}